\let\OLDthebibliography\thebibliography
\renewcommand\thebibliography[1]{
  \OLDthebibliography{#1}
  \setlength{\parskip}{0pt}
  \setlength{\itemsep}{0pt plus 0.0ex}
}
\def\numberlikeadb{\global\def\theequation{\thesection.\arabic{equation}}}
\newtheorem{theorem}{Theorem}[section]
\newtheorem{lemma}[theorem]{Lemma}
\newtheorem{corollary}[theorem]{Corollary}
\newtheorem{proposition}[theorem]{Proposition}
\newtheorem{remark}[theorem]{Remark}
\newtheorem{example}[theorem]{Example}
\newcommand{\rg}[1]{{\color{black} #1}}
\begin{document}

\title{Bounding Kolmogorov distances through Wasserstein and related integral probability metrics}
\author{Robert E. Gaunt\footnote{Department of Mathematics, The University of Manchester, Oxford Road, Manchester M13 9PL, UK, robert.gaunt@manchester.ac.uk; siqi.li-8@postgrad.manchester.ac.uk}\:\, and Siqi L$\mathrm{i}^{*}$}



\date{} 
\maketitle

\vspace{-5mm}


\begin{abstract}We establish general upper bounds on the Kolmogorov distance between two probability distributions in terms of the distance between these distributions as measured with respect to the Wasserstein or smooth Wasserstein metrics. These bounds \rg{generalise existing results from the literature.}
To illustrate the broad applicability of our general bounds, we apply them to extract Kolmogorov distance bounds from multivariate normal, beta and variance-gamma approximations that have been established in the Stein's method literature.
\end{abstract}

\noindent{{\bf{Keywords:}}}  Kolmogorov distance; Wasserstein distance; integral probability metric; inequality; approximation; Stein's method

\noindent{{{\bf{AMS 2010 Subject Classification:}}} Primary 60E15; 60F05; Secondary 41A10 


\section{Introduction}


Stein's method \cite{stein} is a powerful technique in probability theory for bounding the distance between two probability distributions with respect to a probability metric. It has found application throughout the mathematical sciences in areas as diverse as random graph theory \cite{bhj92}, queuing theory \cite{bd16} and analysis on Wiener space \cite{nourdin1}.  Stein's method is well established for normal and Poisson approximation (see the monographs \cite{bhj92,chen,np12}), and has been successfully applied to many other distributional limits; see 
\cite{ley,mrs21,ross}.


In much of the Stein's method literature, the focus is on bounding the distance between two probability distributions with respect to \emph{integral probability metrics} \cite{gs02,z93}: that is, for $\mathbb{R}^d$-valued random variables $X$ and $Y$,
\begin{equation*}d_{\mathcal{H}}(X,Y)=\sup_{h\in\mathcal{H}}|\mathbb{E}[h(X)]-\mathbb{E}[h(Y)]|
\end{equation*}
for some class of real-valued measurable test functions $\mathcal{H}\subset L^1(X)\cap L^1(Y)$. Here and throughout this paper, we use the standard abuse of notation $d_{\mathcal{H}}(X,Y)$ to denote \rg{$d_{\mathcal{H}}(\mu,\nu)$, where $\mu$ and $\nu$ denote the probability measures of the random variables $X$ and $Y$, respectively.}
 Common choices of $\mathcal{H}$ include
\begin{align*}\mathcal{H}_{\mathrm{K}}&=\{\mathbf{1}_{\cdot\leq z}\,|\,z\in\mathbb{R}^d\}, \\
\mathcal{H}_{\mathrm{W}}&=\{h:\mathbb{R}^d\rightarrow\mathbb{R}\,|\,\text{$h$ is Lipschitz, $\|h\|_{\mathrm{Lip}}\leq1$}\}, \\
\mathcal{H}_{\mathrm{bW}}&=\{h:\mathbb{R}^d\rightarrow\mathbb{R}\,|\,\text{$h$ is Lipschitz, $\|h\|\leq1$ and $\|h\|_{\mathrm{Lip}}\leq1$}\}, \\
\mathcal{H}_{[m]}&=\{h:\mathbb{R}\rightarrow\mathbb{R}\,|\,\text{$h^{(m-1)}$ is Lipschitz with $\|h^{(j)}\|\leq1$, $1\leq j\leq m$}\}, \\
\mathcal{H}_{m}&=\{h:\mathbb{R}\rightarrow\mathbb{R}\,|\,\text{$h^{(m-1)}$ is Lipschitz with $\|h^{(j)}\|\leq1$, $0\leq j\leq m$}\},
\end{align*}
where $h^{(0)}\equiv h$, $\|\cdot\|$ denotes the usual supremum norm of a real-valued function, and, for a Lipschitz function $h:\mathbb{R}^d\rightarrow\mathbb{R}$, we denote 
$\|h\|_{\mathrm{Lip}}=\sup_{x\not=y}\frac{|h(x)-h(y)|}{\|x-y\|_2}$ with $\|\cdot\|_2$ the Euclidean norm. Throughout the paper, $\|h^{(m)}\|$ will denote the Lipschitz constant of the $(m-1)$-th derivative of $h$. We note that for $h\in\mathcal{H}_m$ the function $h^{(m-1)}$ is differentiable almost everywhere. At certain points in this paper, the $m$-th derivative $h^{(m)}$ of $h\in\mathcal{H}_m$ will not exist at all points, but it will be well-defined in a piecewise sense on a finite number of intervals. The classes $\mathcal{H}_{\mathrm{K}}$, $\mathcal{H}_{\mathrm{W}}$ and $\mathcal{H}_{\mathrm{bW}}$ induce the Kolmogorov, Wasserstein (also known as the Kantorovich-Rubenstein or earth-mover's distance) and bounded Wasserstein distances (also known as the Fortet–Mourier or Dudley metric), which we denote by $d_{\mathrm{K}}$, $d_{\mathrm{W}}$ and $d_{\mathrm{bW}}$, respectively. The classes $\mathcal{H}_{[m]}$ and $\mathcal{H}_{m}$ induce smooth Wasserstein distances, which we denote by $d_{[m]}$ and $d_{m}$ respectively (see, for example, \cite{dp18} and \cite{amps16}). Note that $d_{[1]}=d_{\mathrm{W}}$ and $d_{1}=d_{\mathrm{bW}}$, and that $d_{m}\leq d_{[m]}$ for all $m\geq1$.
Other smooth Wasserstein distances can be induced by, for example, only requiring that $\|h^{(m)}\|\leq1$ (see, for example, \cite{f21}). A generalisation of the $d_{[m]}$ and $d_{m}$ metrics to $\mathbb{R}^d$-valued random vectors involves a little more notation, and is given in Section \ref{sec2.2}. 


One of the most basic applications of Stein's method is to derive error bounds for the classical central limit theorem. Let $X_1,\ldots,X_n$ be independent and identically distributed random variables with zero mean, unit variance and $\mathbb{E}[|X_1|^3]<\infty$. Let $W_n=\frac{1}{\sqrt{n}}\sum_{i=1}^nX_i$ and $Z\sim N(0,1)$. An application of Stein's method involving only elementary calculations yields the Wasserstein distance bound
\begin{equation}\label{dwclt}d_{\mathrm{W}}(W_n,Z)\leq\frac{1}{\sqrt{n}}\big(2+\mathbb{E}[|X_1|^3]\big).
\end{equation} 
(see \cite{r98} for a simple proof, and also \cite[Corollary 4.2]{chen} for the improved upper bound $\frac{1}{\sqrt{n}}\mathbb{E}[|X_1|^3]$). Optimal order Wasserstein distance bounds via Stein's method were first obtained by \cite{e74}, just a couple of years after Stein introduced his beautiful method. However, it took a further ten years until \cite{bh84} were able to use Stein's method to obtain optimal order $n^{-\frac{1}{2}}$ bounds for the central limit theorem in the Kolmogorov metric. Indeed, for technical reasons, it is often difficult to use Stein's method to directly obtain error bounds with respect to the Kolmogorov distance. It is, however, possible to deduce Kolmogorov distance bounds from Wasserstein distance bounds.

 Proposition 1.2, part 2, of \cite{ross} states that if $Y$ is a real-valued random variable with Lebesgue density bounded above by $C>0$, then for any real-valued random variable $X$,
\begin{equation}\label{rossbd}d_{\mathrm{K}}(X,Y)\leq\sqrt{2Cd_{\mathrm{W}}(X,Y)}.
\end{equation}
This is a useful bound because any bound obtained on the Wasserstein distance between an arbitrary distribution and one with bounded Lebesgue density, such as the normal or exponential distributions, immediately grants a bound in the Kolmogorov metric. Because of the significance of this implication, variants of the bound (\ref{rossbd}) in the setting of normal approximation are given in several other monographs and surveys on Stein's method; see \cite{bc05,c14,chen,np12,stein2}. Typically, inequality (\ref{rossbd}) will yield sub-optimal Kolmogorov distance bounds; for example, using the bound (\ref{dwclt}) yields a sub-optimal $O(n^{-\frac{1}{4}})$ Berry-Esseen bound for the central limit theorem. 

Over the years, many other bounds in the spirit of (\ref{rossbd}) have been derived, with some contributions including the following. Under the same assumptions as (\ref{rossbd}), \cite{pike} obtained the bound $d_{\mathrm{K}}(X,Y)\leq(\frac{C}{2}+1)\sqrt{d_{\mathrm{bW}}(X,Y)}$. For the case $Y$ has a variance-gamma distribution, \cite{gaunt vg2,gaunt vg3} obtained analogues of (\ref{rossbd}) in which the variance-gamma density has a singularity at the location parameter, whilst \cite{nnp16} have obtained analogues of (\ref{rossbd}) for the case that $Y$ is a mixture of normal distributions. 
Multivariate generalisations of (\ref{rossbd}) in which $Y$ has the multivariate normal distribution are given in \cite{app16,k19} with a similar bound on the convex distance recently obtained by \cite{npy20}. Other works in which smoothing techniques have been used to obtain bounds on $d_{\mathcal{H}}(X,Y)$, where $Y$ has the multivariate normal distribution and $\mathcal{H}$ is a class of non-smooth test functions include \cite{b86,gotze,Reinert Multivariate,rr96}. For the Dirichlet distribution see \cite{grr17}.  The case that $Y$ is a Gaussian process has also recently been considered by \cite{brz21}, whilst the Dirichlet process is dealt with in \cite{gr21}. Upper bounds on the Wasserstein distance $d_\mathrm{W}=d_{[1]}$ in terms of the $d_{[2]}$ and $d_{[3]}$ metrics have also been given in \cite{dp18,h20,npr10}, and a bound on the total variation distance in terms of the Wasserstein metric when $X$ and $Y$ are random variables belonging to a finite sum of Wiener chaoses is given by \cite{npoly13}. 


In the light of these
results and the significance of the bound (\ref{rossbd}), in this paper, we address the natural problem of establishing general bounds that allow one to extract Kolmogorov distance bounds from Wasserstein and smooth Wasserstein distance bounds. Through our general bounds, we aim to reduce the need for researchers to on a case-by-case basis establish bounds that enable their smooth Wasserstein distance bounds to be converted into Kolmogorov distance bounds.  However, 
in certain applications,
one may wish to make use of particular structures of the problem at hand to potentially obtain sharper bounds than would result from our general bounds, so this goal is unlikely to be fully met.
Nevertheless, our general bounds should at least serve a useful purpose of allowing researchers to immediately extract Kolmogorov distance bounds from bounds they obtain with respect to a smooth Wasserstein metric. It should also be noted that whilst we have provided our motivation through Stein's method, Wasserstein and smooth Wasserstein metrics arise throughout probability and statistics,
meaning that our bounds may find utility in other research domains.

In Section \ref{sec2}, we provide general upper bounds for the Kolmogorov distance $d_{\mathrm{K}}(X,Y)$ in terms of $d_{m}(X,Y)$, $m\geq1$. Since $d_{m}\leq d_{[m]}$, upper bounds in terms of $d_{[m]}(X,Y)$ follow immediately. Propositions \ref{prop2.1} and \ref{prop2.2} cover the case that $X$ and $Y$ are real-valued random variables, whilst bounds for $\mathbb{R}^d$-valued random vectors $X$ and $Y$ are given in Proposition \ref{prop2.4}.  Our bounds hold for an arbitrary random element $X$, whilst we assume that the Lebesgue density of $Y$ is either bounded or has a certain type of behaviour at its singularities (logaritmic, power function, or product of logarithms and powers functions); this covers a wide class of distributions, see Remarks \ref{remappl1} and \ref{remappl2}. A bound for the important case of the multivariate normal distribution is given in Proposition \ref{prop2.5}. \rg{It is therefore apparent that our bounds are rather general; for example, we have extended the scope of inequality (\ref{rossbd}) to generalise the Wasserstein distance $d_{\mathrm{W}}$ to the $d_m$ metric for $m\geq1$; to allow for random variables $Y$ with unbounded densities; and to multivariate distributions.} In Section \ref{sec3}, we illustrate how the general bounds of Section \ref{sec2} can be applied to specific distributional approximations by deducing Kolmogorov distance bounds from bounds in the Stein's method literature for multivariate normal, beta and variance-gamma approximation that were given with respect to the $d_m$ metric for $m\geq1$. The proofs of the results from Section \ref{sec2} involve the construction of suitable smooth approximations to indicator functions; the details are worked out in Section \ref{sec4}. In Section \ref{sec5}, we use these results to prove the general bounds of Section \ref{sec2}.







\section{General bounds}\label{sec2}

In this section, we state general results for bounding the Kolmogorov distance between the distributions of the random elements $X$ and $Y$ in terms of the distance between these elements as measured by the $d_m$ metrics, $m \geq 1.$ 
In each of our bounds, $X$ is any random variable/vector (only needing to be such that $d_m(X,Y)$ is well-defined), whilst various assumptions are made on the density of $Y$. The proofs are given in Section 5.

\subsection{The univariate case}\label{sec2.1}	

General bounds for univariate distributions are given in the following proposition. Slightly larger and less compact bounds that have a larger range of validity (that is weaker assumptions on $d_m(X,Y)$) are given in Proposition \ref{prop2.2} below.

\begin{proposition}\label{prop2.1} Let $X$ be any real-valued random variable and let $Y$ be a continuous real-valued random variable with probability density function $p$. For $m\geq1$, let $M_m=2^{m-2}(m-1)!$ and $N_m = 2^m M_m.$

\vspace{2mm}

\noindent{(i):} Suppose there exists a positive constant $A>0$ such that $p(y) \leq A $ for all $y \in \mathbb{R}.$ Suppose also that $d_m(X,Y) \leq \frac{A}{2N_m}. $ Then
\begin{equation}\label{dmfirst}
d_{\mathrm{K}}(X,Y) \leq 2\big(A^m M_m  d_{m} (X,Y)\big)^{\frac{1}{m+1}}.
\end{equation}

Suppose now that the density function $p(y)$ of the random variable $Y$ has singularities at the points $y_1,\ldots,y_n \in \mathbb{R}.$ Let $\epsilon>0$ be a constant, which satisfies $\epsilon\leq\frac{1}{2}\min_{1\leq i<j\leq n}|y_i-y_j|$ if $n\geq2$. 



\vspace{2mm}

\noindent{(ii):} 
Suppose that there exist constants $A,c>0$ such that $p(y)\leq -A\log|c(y-y_i)|$ for all $|y-y_i| < \epsilon$, $i=1,\ldots,n$, where $\epsilon\leq\frac{1}{c}$. Suppose also that  $\int_I p(y) \,dy \leq \int^{\delta}_{-\delta} -A \log |cy|\,dy$ for any interval $I$ with length $2\delta\leq 2\epsilon$. 
Then, if $d_m(X,Y) \leq \frac{A}{N_{m}} \min(1,(2\epsilon)^{m+1})$, 
\begin{equation}\label{dmsecond}
d_{\mathrm{K}}(X,Y) \leq  \bigg[2+\frac{1}{m+1} \log \bigg(\frac{2A}{c^{m+1}M_md_m(X,Y)}\bigg)\bigg]\big(A^mN_md_m(X,Y)\big)^{\frac{1}{m+1}}.
\end{equation}


\noindent{(iii):} Suppose there exist constants $A>0$ and $ 0<a<1$ such that $p(y)\leq A|y-y_i|^{-a}$ for all $|y-y_i| < \epsilon$, $i=1,\ldots,n$. Suppose also that $\int_I p(y) \,dy \leq \int^{\delta}_{-\delta} A|y|^{-a}\,dy  $ for any interval $I$ with length $2\delta\leq2\epsilon$. Then, if $d_m(X,Y) \leq \frac{2^aA}{(1-a) N_{m} } \min(1,(2\epsilon)^{m+1-a}) $,
\begin{equation}\label{dmthird}
d_{\mathrm{K}}(X,Y) \leq 2\bigg(\frac{2^aA}{1-a}\bigg)^{\frac{m}{m+1-a}}\big(N_md_m(X,Y)\big)^{\frac{1-a}{m+1-a}}.
\end{equation}

\noindent{(iv):} Suppose that there exist constants $A,c>0$, $0\leq a<1$ and $b\geq0$ such that $p(y)\leq A |y-y_i|^{-a}(-\log |c(y-y_i)|)^b $ for all $|y-y_i| < \epsilon$, $i=1,\ldots,n$, where $\epsilon\leq\frac{1}{c}$. Suppose also that $\int_I p(y)\,dy \leq \int^{\delta}_{-\delta} A|y|^{-a}(-\log |cy|)^b  \,dy $ for any interval $I$ with length $2\delta\leq2\epsilon$. Then, if $d_m(X,Y)< \frac{2^{a+b+1}c^{1-a}A}{(1-a) N_{m} }\min(1,(2\epsilon)^{m+1-a},2^{-(a+b)(1+\frac{m}{1-a})}(1/c)^{m+1-a} )$,
\begin{align}
 d_{\mathrm{K}}(X,Y)  &\leq \bigg(\frac{2^{a+b+1}A}{1-a}\bigg)^{\frac{m}{m+1-a}}(N_md_m(X,Y))^{\frac{1-a}{m+1-a}}\nonumber\\
 \label{dmfourth}&\quad\times\bigg[1+\frac{1}{{(m+1-a)^b}}\log^b \bigg(\frac{2^{b+2}A}{(1-a)c^{m+1-a}M_md_m(X,Y)}\bigg) \bigg] .
\end{align}




\end{proposition}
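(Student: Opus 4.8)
The plan is to follow the same scheme as for parts (i)--(iii): approximate $\mathbf{1}_{\cdot\leq z}$ by a smooth function, control the resulting smoothing error, and optimise the smoothing scale. The only genuinely new ingredient is a clean estimate of the integral of the singular envelope $A|y|^{-a}(-\log|cy|)^{b}$ over a short interval. Fix $z\in\mathbb{R}$ and a smoothing scale $\delta>0$, to be optimised at the end. By the constructions of Section~\ref{sec4}, for this $z$ and $\delta$ there are functions $h,g$ with $\mathbf{1}_{\cdot\leq z}\leq h\leq\mathbf{1}_{\cdot\leq z+\delta}$, $\mathbf{1}_{\cdot\leq z-\delta}\leq g\leq\mathbf{1}_{\cdot\leq z}$ and $(\delta^{m}/N_{m})h,\,(\delta^{m}/N_{m})g\in\mathcal{H}_{m}$. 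Since $h\geq\mathbf{1}_{\cdot\leq z}$ we have $\mathbb{P}(X\leq z)-\mathbb{P}(Y\leq z)\leq(\mathbb{E}[h(X)]-\mathbb{E}[h(Y)])+(\mathbb{E}[h(Y)]-\mathbb{P}(Y\leq z))$, where the first bracket is at most $(N_{m}/\delta^{m})\,d_{m}(X,Y)$ by definition of $d_{m}$, and the second equals $\int_{z}^{z+\delta}h(y)p(y)\,dy\leq\int_{z}^{z+\delta}p(y)\,dy$; treating the reverse deviation with $g$ and taking the supremum over $z$ gives
\begin{equation*}
d_{\mathrm{K}}(X,Y)\ \leq\ \sup_{I:\,|I|=\delta}\mathbb{P}(Y\in I)\ +\ \frac{N_{m}}{\delta^{m}}\,d_{m}(X,Y).
\end{equation*}
Provided $\delta\leq 2\epsilon$ (so that $\delta/2\leq\epsilon\leq 1/c$ and the logarithm is nonnegative on the relevant range), the domination hypothesis on $p$ bounds the first term by $\int_{-\delta/2}^{\delta/2}A|y|^{-a}(-\log|cy|)^{b}\,dy=2A\,F(\delta/2)$, where $F(t):=\int_{0}^{t}y^{-a}(-\log(cy))^{b}\,dy$.

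The crux is to estimate $F(t)$ for $0<t\leq 1/c$. Substituting $u=cy$ and then $u=\mathrm{e}^{-s}$ turns $F(t)$ into $c^{a-1}\int_{L}^{\infty}s^{b}\mathrm{e}^{-(1-a)s}\,ds$ with $L:=\log(1/(ct))\geq 0$; shifting $s\mapsto L+s$, using $(L+s)^{b}\leq 2^{b}(L^{b}+s^{b})$ together with $\int_{0}^{\infty}\mathrm{e}^{-(1-a)s}\,ds=(1-a)^{-1}$ and $\int_{0}^{\infty}s^{b}\mathrm{e}^{-(1-a)s}\,ds=\Gamma(b+1)(1-a)^{-(b+1)}$, and noting that $\mathrm{e}^{-(1-a)L}=(ct)^{1-a}$ cancels the surviving powers of $c$, one obtains an estimate of the shape
\begin{equation*}
F(t)\ \leq\ \frac{2^{b}\,t^{1-a}}{1-a}\bigg[\Big(\log\frac1{ct}\Big)^{b}+\frac{\Gamma(b+1)}{(1-a)^{b}}\bigg].
\end{equation*}
The third term in the minimum constraining $d_{m}(X,Y)$ is present precisely for this estimate: it forces the eventual $\delta$ to be small enough that $\Gamma(b+1)(1-a)^{-b}\leq\big(\log(2/(c\delta))\big)^{b}$, so that the $\Gamma(b+1)(1-a)^{-b}$ remainder can be absorbed into the logarithmic term and the bracket of the final bound collapses to the displayed `$1+\cdots$'.

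Finally, with $2A\,F(\delta/2)$ now bounded by a constant times $A\,\delta^{1-a}(1-a)^{-1}\big(\log(2/(c\delta))\big)^{b}$, I would choose $\delta$ to balance the two error terms --- essentially $\delta^{m+1-a}$ proportional to $(1-a)N_{m}d_{m}(X,Y)/A$, with the proportionality constant adjusted so that, after substitution, the prefactor emerges as $\big(2^{a+b+1}A/(1-a)\big)^{m/(m+1-a)}$ and the logarithm's argument as $2^{b+2}A/((1-a)c^{m+1-a}M_{m}d_{m}(X,Y))$ --- and substitute back. The first two terms of the minimum guarantee that this $\delta$ obeys $\delta\leq 2\epsilon$ and is small enough for the Section~\ref{sec4} construction to be admissible, and the third delivers the condition on $\log(2/(c\delta))$ needed above. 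On substitution the factor $\delta^{1-a}$ yields $(N_{m}d_{m}(X,Y))^{(1-a)/(m+1-a)}$ and the stated prefactor, while $\log(2/(c\delta))$ becomes $(m+1-a)^{-1}\log\big(2^{b+2}A/((1-a)c^{m+1-a}M_{m}d_{m}(X,Y))\big)$ once $N_{m}=2^{m}M_{m}$ is used --- exactly the logarithmic factor of (\ref{dmfourth}) --- and elementary inequalities of the type $(x+y)^{b}\leq 2^{b}(x^{b}+y^{b})$ together with crude bounds on $\Gamma(b+1)$ give the clean closed form. The main obstacle is entirely this last stretch: obtaining the estimate for $F$, verifying that the stated smallness assumption on $d_{m}(X,Y)$ really does force $\Gamma(b+1)(1-a)^{-b}\leq(\log(2/(c\delta)))^{b}$, and tracking all the powers of $2$ through the integration, the rescaling by $\delta/2$, and the final substitution so that everything collapses exactly into (\ref{dmfourth}).
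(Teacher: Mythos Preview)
Your overall scheme is exactly the paper's: smooth $\mathbf{1}_{\cdot\le z}$ by a function from $\mathcal{F}_{m,z,\alpha}$ satisfying $\max_i\|h^{(i)}\|\le N_m/\alpha^m$, bound the smoothing error by the integral of the singular envelope over an interval of length $\alpha$, and optimise $\alpha$. The substitution $u=cy$, $u=e^{-s}$ you use for $F(t)$ is also the paper's: it rewrites $2AF(\alpha/2)$ as $\frac{2c^{a-1}A}{(1-a)^{b+1}}\Gamma\big(b+1,(1-a)\log(2/(c\alpha))\big)$ with $\Gamma(\cdot,\cdot)$ the upper incomplete gamma function.

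The one genuine divergence is how you bound this incomplete gamma function. The paper simply cites the inequality $\Gamma(b+1,y)\le 2^{b+1}y^{b}e^{-y}$, valid for $e^{y}>2^{b+1}$ (Jameson), which with $y=(1-a)\log(2/(c\alpha))$ gives $2AF(\alpha/2)\le\frac{2^{a+b+1}A}{1-a}\alpha^{1-a}\log^{b}(2/(c\alpha))$ in one step; the third term in the minimum on $d_m(X,Y)$ is precisely the condition $e^{y}>2^{b+1}$, i.e.\ $\alpha^{1-a}<2^{-(a+b)}c^{-(1-a)}$. Your route---shift $s\mapsto L+s$ and use $(L+s)^b\le 2^b(L^b+s^b)$---yields $\frac{2^b}{1-a}t^{1-a}\big[L^b+\Gamma(b+1)(1-a)^{-b}\big]$ and then needs the separate condition $\Gamma(b+1)(1-a)^{-b}\le L^b$ to collapse to the same $2^{b+1}L^b$. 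That is a \emph{different} smallness condition from the one stated in the proposition (yours reads $(1-a)L\ge\Gamma(b+1)^{1/b}$, the paper's reads $(1-a)L>(b+1)\log 2$), so the step ``verifying that the stated smallness assumption really does force $\Gamma(b+1)(1-a)^{-b}\le(\log(2/(c\delta)))^b$'' is not automatic and would require an extra inequality comparing $\Gamma(b+1)^{1/b}$ with $(b+1)\log 2$. If you want the constants and the hypothesis to come out \emph{exactly} as in (\ref{dmfourth}), it is cleaner to invoke the incomplete-gamma bound directly; your elementary estimate would produce a bound of the same shape but with a slightly different admissibility condition on $d_m(X,Y)$.
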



\begin{remark}\label{remappl1} (1): In the Stein's method literature, bounds are often given for the quantity $|\mathbb{E}[h(X)]-\mathbb{E}[h(Y)]|$, where $h$ is a real-valued function. For $h$ such that $h^{(m-1)}$ is Lipschitz, bounds are often of the form $|\mathbb{E}[h(X)]-\mathbb{E}[h(Y)]| \leq \sum^m_{k=0} a_k \|h^{(k)} \|$, for $a_0,\ldots,a_m\geq0$. 
Restricting $h$ to the class $\mathcal{H}_m$, we get that $d_m(X,Y)\leq\sum_{k=0}^ma_k$, and so we can apply Proposition \ref{prop2.1} to immediately deduce a bound on $d_{\mathrm{K}}(X,Y)$.  Doing so will, however, typically lead to worse bounds (in the constant) than applying the smoothing methods of this paper directly to the quantity $\sum^m_{k=0} a_k \|h^{(k)} \|$.

\vspace{2mm}

\noindent{(2):} Since $d_1\leq d_{[1]}=d_{\mathrm{W}}$ and $d_m\leq d_{[m]}$ for all $m\geq1$, inequality (\ref{dmfirst}) generalises inequality (\ref{rossbd}) to the $d_{[m]}$ metric for $m\geq1$. 
We expect that inequality (\ref{dmfirst}) will typically give sub-optimal Kolmogorov distance rates, but that the exponent $\frac{1}{m+1}$ is optimal. We have not been able to provide an example to confirm this, although in Remark \ref{remnearopt} 
 we show that the exponent $q$ in the general bound $d_{\mathrm{K}}(X,Y)\leq C(d_m(X,Y))^q$ must satisfy $q\leq\frac{1}{m}$. \rg{We further remark that for $m\geq2$ the upper bound in inequality (\ref{dmfirst}) has a slower convergence rate than that of inequality (\ref{rossbd}). However, an advantage of inequality (\ref{dmfirst}) is that if one only has access to a bound on the distance between the distributions of $X$ and $Y$ in the weaker $d_m$ metric for some $m\geq2$, but not in the Wasserstein metric, then one can make use of inequality (\ref{dmfirst}), whilst inequality (\ref{rossbd}) cannot be applied.}

\vspace{2mm}

\noindent{(3):} Let us make a remark about part (iii) of Proposition \ref{prop2.1}; a similar comment applies to parts (ii) and (iv). If there is just one singularity ($n=1$) and it is assumed the density $p(y)$ is non-decreasing on $(-\infty,y_1)$ and non-increasing on $(y_1,\infty)$, then the condition that $\int_I p(y) \,dy \leq \int^{\delta}_{-\delta} A|y|^{-a}\,dy  $ for any interval $I$ with length $2\delta\leq2\epsilon$  is automatically satisfied, provided the assumption that there exist constants $A>0$ and $ 0<a<1$ such that $p(y)\leq A|y-y_1|^{-a}$ for all $|y-y_1| < \epsilon$ is met. 

\vspace{2mm}

\noindent{(4):} The assumptions on $d_m(X,Y)$ in parts (i)--(iii) are quite mild and if the constant $A$ is sufficiently large then the condition becomes trivial. For example, taking $d_m(X,Y)=\frac{A}{2N_m}$ in inequality (\ref{dmfirst}) gives the bound $d_{\mathrm{K}}(X,Y)\leq A$, which is uninformative if $A\geq1$. The assumptions on $d_m(X,Y)$ in part (iv) are more restrictive, which results from an application of an upper bound on the upper incomplete function that only holds for certain parameter values. As for parts (i)--(iii), if $A$ is sufficiently large then the condition for $d_m(X,Y)$ becomes trivial, though. Also, in a typical application of Stein's method, one is interested in the situation that $d_m(X,Y)$ is `small', for example in deriving bounds on rates of convergence, in which case making such assumptions on $d_m(X,Y)$ are not restrictive.  



\vspace{2mm}

\noindent{(5):} The bounds of Proposition \ref{prop2.1} can be applied in many settings. Part (i) applies when $Y$ has bounded Lebesgue density, a condition satisfied by many classical distributions, such as the normal and exponential. Distributions with logarithmic singularities (part (ii)) include the product of two correlated zero-mean normal random variables (see \cite{gaunt 21}) and a subclass of the variance-gamma distribution (see Section 3). Distributions with power law singularities (part (iii)) include subclasses of the gamma and beta distributions (see Section 3). Distributions with more general singularities of the form given in part (iv) include the product of $k \geq 2$ independent standard normal random variables (see \cite{st70}) and the product of two independent gamma random variables with equal shape parameters (see \cite{m68}). 

\vspace{2mm}

\noindent{(6):} The bound in part (iv) is of  order  $O((d_m(X,Y))^{\frac{1-a}{m+1-a}}\log^b (\frac{1}{d_m(X,Y)}) )$ for small $d_m(X,Y).$ Setting $a=0$ and or $b=0$ then yields  bounds that are of the same order as those in parts (i), (ii) and (iii), although the bound does not reduce exactly to those bounds due to extra approximations applied as part of the derivation. Observe that the rate of convergence of the bound decreases as $m,a$ and $b$ increase.
\end{remark}

\begin{proposition}\label{prop2.2} Let $X$ be any real-valued random variable and let $Y$ be a real-valued random variable with probability density function $p$. Let $m\geq1$.

\vspace{2mm}

\noindent{(i):} Suppose that the density $p$ is bounded above by the constant $A>0$. 
Then
\begin{equation*}\label{dmfirst00}
 d_{\mathrm{K}}(X,Y) \leq 2\big(A^m M_m  d_{m} (X,Y)\big)^{\frac{1}{m+1}}+M_md_m(X,Y).
\end{equation*}





\noindent{(ii):} 
Suppose that there exist constants $A,c>0$ and $B\geq0$ such that  $\int_I p(y) \,dy \leq \int^{\delta}_{-\delta} (-A \log_{-} |cy|+B)\,dy$ for any interval $I$ with length $2\delta>0$, where $\log_{-}(x)=\min(0,\log (x))$. 
Then 
\begin{align*}\label{dmsecond00}
d_{\mathrm{K}}(X,Y) &\leq  \bigg[2+\frac{B}{2A}+\frac{1}{m+1} \log_{-} \bigg(\frac{2A}{c^{m+1}M_md_m(X,Y)}\bigg)\bigg]\big(A^mN_md_m(X,Y)\big)^{\frac{1}{m+1}} \\
&\quad+M_md_m(X,Y).
\end{align*}


\noindent{(iii):} Suppose there exist constants $A>0$ and $ 0<a<1$ such that $\int_I p(y) \,dy \leq \int^{\delta}_{-\delta} A|y|^{-a}\,dy  $ for any interval $I$ with length $2\delta>0$. Then
\begin{equation*}\label{dmthird00}
d_{\mathrm{K}}(X,Y) \leq 2\bigg(\frac{2^aA}{1-a}\bigg)^{\frac{m}{m+1-a}}\big(N_md_m(X,Y)\big)^{\frac{1-a}{m+1-a}} +M_md_m(X,Y).
\end{equation*}

\noindent{(iv):} Suppose that there exist constants $A,c>0$, $B\geq0$, $0\leq a<1$ and $b\geq0$ such that $\int_I p(y)\,dy \leq \int^{\delta}_{-\delta}[ A|y|^{-a}(-\log_{-} |cy|)^b+B]  \,dy $ for any interval $I$ with length $2\delta>0$. Then, if $d_m(X,Y)< \frac{c^{-m}A}{(1-a) M_{m} }\cdot2^{1-\frac{m(b+1)}{1-a}}$,
\begin{align*}
 d_{\mathrm{K}}(X,Y)  &\leq \bigg(\frac{2^{a+b+1}A}{1-a}\bigg)^{\frac{m}{m+1-a}}(N_md_m(X,Y))^{\frac{1-a}{m+1-a}}\times\nonumber\\
 \label{dmfourth00}&\quad\times\bigg[1+\frac{1}{{(m+1-a)^b}}\log_{-}^b \bigg(\frac{2^{b+2}A}{(1-a)c^{m+1-a}M_md_m(X,Y)}\bigg) \bigg]  \\
 &\quad+\frac{B}{2}\bigg(\frac{(1-a)N_md_m(X,Y)}{2^{a+b+1}A}\bigg)^{\frac{1}{m+1-a}}+M_md_m(X,Y).
\end{align*}
\end{proposition}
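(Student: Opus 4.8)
The plan is to mimic the argument behind Proposition \ref{prop2.1}, the differences being that (a) we work throughout with the globalised density-domination hypotheses of the proposition---those involving $\log_{-}$ and the constant $B$, which hold for intervals of every length and hence remove the role of the localisation parameter $\epsilon$---and (b) we do not impose an a priori smallness condition on $d_m(X,Y)$, which forces us to retain a residual linear term. Fix $z\in\mathbb{R}$ and a smoothing parameter $\tau>0$. Using the smooth approximations to $\mathbf 1_{\cdot\le z}$ constructed in Section \ref{sec4}, take functions $h^{\pm}_{z,\tau}$ with $h^{-}_{z,\tau}\le\mathbf 1_{\cdot\le z}\le h^{+}_{z,\tau}$, equal to $\mathbf 1_{\cdot\le z}$ outside an interval $J_{z,\tau}$ whose length is of order $\tau$, and with derivatives satisfying $\|(h^{\pm}_{z,\tau})^{(j)}\|\le C_j\tau^{-j}$, so that a suitable multiple $\kappa(\tau)^{-1}h^{\pm}_{z,\tau}$ lies in $\mathcal{H}_m$ with $\kappa(\tau)$ of order $M_m\tau^{-m}$. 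Splitting $\mathbb P(X\le z)-\mathbb P(Y\le z)\le\big(\mathbb E[h^{+}_{z,\tau}(X)]-\mathbb E[h^{+}_{z,\tau}(Y)]\big)+\mathbb E[h^{+}_{z,\tau}(Y)-\mathbf 1_{Y\le z}]$, and using the matching lower bound with $h^{-}_{z,\tau}$, the first term is at most $\kappa(\tau)\,d_m(X,Y)$ and the second is at most $\int_{J_{z,\tau}}p(y)\,dy$, which by the domination hypothesis is bounded by $\int_{-\delta}^{\delta}(\cdot)\,dy$ with $2\delta=|J_{z,\tau}|$. Taking the supremum over $z$ then produces an inequality of the shape $d_{\mathrm K}(X,Y)\le M_m\tau^{-m}d_m(X,Y)+g(\tau)$, where $g(\tau)$ is the explicit integral of the dominating density over an interval of length of order $\tau$: linear in $\tau$ in part (i); of order $\tau^{1-a}$ in part (iii); and carrying the logarithmic and $B$ contributions in parts (ii) and (iv).

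Next I would optimise over $\tau$. In parts (i) and (iii) the function $g$ is a pure power, so the minimising $\tau$ is found by differentiation and substituting back gives the leading term $\big(A^mM_md_m(X,Y)\big)^{1/(m+1)}$, respectively $\big(2^aA/(1-a)\big)^{m/(m+1-a)}\big(N_md_m(X,Y)\big)^{(1-a)/(m+1-a)}$, up to the absolute constants shown; the extra term $M_md_m(X,Y)$ then accounts for the regime in which the optimal $\tau$ exceeds the largest value $\tau_0$ for which the normalisation $\kappa(\tau)\propto M_m\tau^{-m}$ is available from the Section \ref{sec4} construction---there one takes $\tau=\tau_0$, picks up $M_m\tau_0^{-m}d_m(X,Y)$ together with a bounded $g$-contribution, and checks that in this regime $d_m(X,Y)$ is already large enough for the displayed right-hand side to dominate. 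In parts (ii) and (iv) the logarithm means the stationarity equation has no closed-form root, so instead I would substitute the value of $\tau$ that is optimal for the associated pure-power problem; this yields exactly the bracketed factors $\big[2+\tfrac{B}{2A}+\tfrac1{m+1}\log_{-}(\cdots)\big]$ and $\big[1+\tfrac1{(m+1-a)^b}\log_{-}^{b}(\cdots)\big]$ after estimating $-\log_{-}$ of the displayed ratio, together with the extra $\tfrac{B}{2}(\cdots)^{1/(m+1-a)}$ and $M_md_m(X,Y)$ terms coming from the $B$-part of $g$ and from the endpoint regime. The surviving hypothesis $d_m(X,Y)<\tfrac{c^{-m}A}{(1-a)M_m}2^{1-m(b+1)/(1-a)}$ in part (iv) is exactly what makes the upper bound on the (upper) incomplete gamma function used to estimate $g(\tau)$ valid, and it also keeps the chosen $\tau$ within the permitted range.

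The routine part is the calculus of minimising $M_m\tau^{-m}d_m(X,Y)+g(\tau)$ and absorbing numerical factors into the clean constants $M_m$ and $N_m=2^mM_m$. The hard part will be parts (ii) and (iv): one must bound $g(\tau)=\int_{-\delta}^{\delta}\big[A|y|^{-a}(-\log_{-}|cy|)^b+B\big]\,dy$ by an explicit expression through the incomplete-gamma estimate while keeping the resulting logarithm in the form $\log_{-}^{b}(\cdots)$, and then verify that the substituted (rather than truly optimal) $\tau$ stays in the region where every inequality used holds---this last point being precisely why an assumption on $d_m(X,Y)$ survives in part (iv) but not in parts (i)--(iii). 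A secondary technical point is the graceful handling of $\log_{-}$: when its argument is $\ge1$ the logarithmic term vanishes, and the bound must degrade to the pure-power bound of part (iii) plus a constant, consistent with the behaviour recorded in Remark \ref{remappl1}(6).
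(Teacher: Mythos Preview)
Your overall strategy—smoothing the indicator, splitting the Kolmogorov difference, bounding the local integral via the domination hypothesis, and then substituting a near-optimal value of the smoothing parameter—is exactly the paper's approach, and your treatment of the integral $g(\tau)$ in each of parts (i)--(iv) (including the incomplete gamma estimate and the role of $\log_{-}$) matches the paper.

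Where you diverge is in the origin of the additive $M_m d_m(X,Y)$ term. You propose a case analysis: when the optimising $\tau$ exceeds the threshold $\tau_0$ beyond which the pure-power bound $\kappa(\tau)\propto M_m\tau^{-m}$ from Section~\ref{sec4} is no longer available, set $\tau=\tau_0$ and argue that the resulting bound is dominated by the displayed right-hand side. The paper avoids this entirely by using the other inequality from Corollary~\ref{cormax}, namely (\ref{nfbd1}), which gives $\max_{0\le i\le m}\|h_{m,z,\alpha}^{(i)}\|\le M_m\big(1+2^m/\alpha^m\big)$ for \emph{every} $\alpha>0$. This produces directly
\[
\mathbb P(X\le z)-\mathbb P(Y\le z)\le\Big(M_m+\frac{N_m}{\alpha^m}\Big)d_m(X,Y)+\int_z^{z+\alpha}h_{m,z,\alpha}(y)\,p(y)\,dy,
\]
so the $M_m d_m(X,Y)$ term is already present, and one then optimises $\alpha$ over all of $(0,\infty)$ exactly as in the proof of Proposition~\ref{prop2.1}. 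Your endpoint-regime argument could perhaps be pushed through, but the verification that the displayed bound dominates when $\tau>\tau_0$ is not automatic (the left-hand side grows linearly in $d_m$ while the leading term on the right grows only like $d_m^{1/(m+1)}$, so one must appeal to $d_{\mathrm K}\le 1$ and chase constants); the paper's device is both simpler and delivers the stated constants without further work.
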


\subsection{The multivariate case}\label{sec2.2}
As part of our proofs of Propositions \ref{prop2.1} and \ref{prop2.2}, for fixed $\alpha > 0$, we required a bound on the probability $\mathbb{P}(z \leq Y \leq z+\alpha )$ for all $z \in \mathbb{R}.$ Here and elsewhere in the paper, we abuse notation and let $z+\alpha$ denote the $d$-dimensional vector with $j$-th component $z_j+\alpha$. For general random variables with densities satisfying the assumptions of parts (i)--(iv), it was possible to get accurate bounds on this probability. However, obtaining bounds on a suitable generalisation of this probability for general $\mathbb{R}^d$-valued random vectors $Y$ with a good dependence on the dimension $d$ is difficult. As such, we bound the probability in terms of $\alpha$ and a constant that may depend on $d$, and so the bounds in our multivariate generalisation of Propositions \ref{prop2.1} and \ref{prop2.2} (given below) do not have explicit constants. For certain specific distributions, such as the multivariate normal distribution, it may be possible to obtain accurate bounds with a good dependence on $d$, in which case explicit constants can be given; see Proposition \ref{prop2.5} below. 


Let $x^{(j)}$ denote the $j$-th component of the vector $x$, and denote the open box centered at $z\in\mathbb{R}^d$ with `width' $2r>0$ by $B(z,r) = \{ x \in \mathbb{R}^d: \max_{1 \leq j \leq d}|z^{(j)}-x^{(j)}| <r \}$.

\begin{proposition}\label{prop2.4}
Let $X$ be any $\mathbb{R}^d$-valued random vector and let $Y$ be a continuous $\mathbb{R}^d$-valued random vector with probability density function $p$. Let $m\geq1$.

\vspace{2mm}

\noindent{(i):} Suppose $p$ is bounded. Then, there exists a universal constant $C>0$ such that
\begin{equation*}
d_{\mathrm{K}}(X,Y) \leq C \big(d_m(X,Y)\big)^{\frac{1}{m+1}}.
\end{equation*}

\noindent{(ii):} 
Suppose there exist constants $A_1,\ldots,A_d,B\geq0$ and $c>0$  such that $\int_{B(z,\delta)} p(y) \,dy  \leq \int_{B(0,\delta)}  \sum^d_{j=1}(-A_j\log_{-}|cy^{(j)}| +B) \,dy$ for any open box $B(z,\delta)\subset\mathbb{R}^d$. Then, there exist universal constants $C_1,C_2>0$ such that
\begin{equation}\label{dmfourth2}
d_{\mathrm{K}}(X,Y) \leq C_1 \big(d_m(X,Y)\big)^{\frac{1}{m+1}}\log\bigg(\frac{C_2}{d_m(X,Y)}\bigg).
\end{equation}

\noindent{(iii):} 
Suppose there exist constants $A_1,\ldots,A_d,B\geq0$ and $0 < a < 1 $  such that $\int_{B(z,\delta)} p(y) \,dy  \leq \int_{B(0,\delta)}  \sum^d_{j=1}A_j |y^{(j)}|^{-a} \,dy$ for any open box $B(z,\delta)\subset\mathbb{R}^d$. Then, there exists a universal constant $C>0$ such that
\begin{equation}\label{dmfourth2dd}
d_{\mathrm{K}}(X,Y) \leq C \big(d_m(X,Y)\big)^{\frac{1-a}{m+1-a}}.
\end{equation}
\end{proposition}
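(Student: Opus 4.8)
The plan is to reduce the multivariate statement to a one-dimensional smoothing argument applied coordinatewise, mirroring the structure of the proofs of Propositions~\ref{prop2.1} and \ref{prop2.2} but now controlling the smoothing error by a box-probability estimate rather than an interval-probability estimate. Recall that $d_{\mathrm{K}}(X,Y)=\sup_{z\in\mathbb{R}^d}|\mathbb{P}(X\leq z)-\mathbb{P}(Y\leq z)|$, where $X\leq z$ means $X^{(j)}\leq z^{(j)}$ for all $j$. First I would fix $z\in\mathbb{R}^d$ and, for a small parameter $\alpha>0$ to be optimised, construct a smooth test function $h_{z,\alpha}\in\mathbb{R}\to\mathbb{R}$ in one variable that approximates $\mathbf{1}_{(-\infty,t]}$ from above and below, with the explicit smoothing already worked out in Section~\ref{sec4}: the relevant $h$ has $\|h^{(j)}\|\leq 1$ for $0\leq j\leq m$ after an appropriate scaling, is supported on transition regions of length $\alpha$, and satisfies $\mathbf{1}_{\cdot\le t}\le h\le\mathbf{1}_{\cdot\le t+\alpha}$ (and a lower analogue). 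Then I would take the $d$-fold tensor product $H(x)=\prod_{j=1}^d h(x^{(j)})$; since each factor lies in $[0,1]$ and has the stated derivative bounds, $H$ (after rescaling by a dimension-dependent combinatorial factor coming from the product rule) is, up to a constant $C=C(d,m)$, an admissible test function for the $d$-dimensional $d_m$ metric as defined in Section~\ref{sec2.2}.

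The sandwiching step is then standard: $\mathbb{E}[H(X)]$ lies between $\mathbb{P}(X\le z)$-type quantities, so
\begin{equation*}
\mathbb{P}(X\le z)-\mathbb{P}(Y\le z)\le \mathbb{E}[H(X)]-\mathbb{E}[H(Y)]+\big(\mathbb{E}[H(Y)]-\mathbb{P}(Y\le z)\big)\le C\,d_m(X,Y)+\mathbb{P}\big(z\le Y\le z+\alpha\big),
\end{equation*}
and symmetrically for the other direction using the lower approximant. Everything now hinges on bounding $\mathbb{P}(z\le Y\le z+\alpha)$, the probability that $Y$ lies in the half-open box of width $\alpha$ with lower corner $z$. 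This box is contained in an open box $B(z',\delta)$ with $\delta$ of order $\alpha$ (and a shifted centre), so the hypothesis of part~(i), (ii) or (iii) applies. In case~(i), $p$ bounded gives $\mathbb{P}(z\le Y\le z+\alpha)\le \|p\|_\infty\,\alpha^d\le \|p\|_\infty\,\alpha$ for $\alpha\le 1$, hence a bound $C\,d_m(X,Y)+C'\alpha$; optimising over $\alpha$ (balancing $d_m(X,Y)\cdot\alpha^{-m}$, which is the true cost of the smoothing after the scaling, against $\alpha$) yields the exponent $\tfrac{1}{m+1}$. In case~(iii), the integral hypothesis gives $\mathbb{P}(z\le Y\le z+\alpha)\le \sum_j A_j\int_{-\delta}^{\delta}|y|^{-a}\,dy \cdot (\text{lower-dimensional box factors})\le C\,\alpha^{1-a}$ for $\alpha$ small, and optimising the balance $d_m(X,Y)\,\alpha^{-m}$ versus $\alpha^{1-a}$ produces the exponent $\tfrac{1-a}{m+1-a}$. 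Case~(ii) is the same with an extra $\log(1/\alpha)$ factor from $\int_{-\delta}^{\delta}(-\log_-|cy|)\,dy\sim\delta\log(1/\delta)$, which after optimisation becomes the $\log(C_2/d_m(X,Y))$ in \eqref{dmfourth2}.

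The main obstacle — and the reason the multivariate constants are left non-explicit, as the authors flag in the paragraph preceding Proposition~\ref{prop2.4} — is precisely the box-probability estimate $\mathbb{P}(z\le Y\le z+\alpha)$ and the cost of realising the indicator of a $d$-dimensional orthant as a genuine $d_m$-test function. The tensor product $H=\prod_j h(x^{(j)})$ is the natural candidate, but differentiating it mixes derivatives of the factors, so one must check that the resulting function still lies (up to a $d$-dependent constant) in the admissible class; this is bookkeeping but must be done carefully against whatever definition of the multivariate $d_m$ metric is fixed in Section~\ref{sec2.2}. Reducing the $d$-dimensional slab probability to a product/sum of one-dimensional integrals is exactly what the integral hypotheses in parts (ii) and (iii) are designed to permit — the $\sum_{j=1}^d$ structure on the right-hand side is what lets one integrate out the remaining $d-1$ coordinates over a bounded box and absorb the resulting volume factor into $C$. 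Once these two points are handled, the optimisation over $\alpha$ is routine and identical in spirit to the univariate proofs, and the three stated exponents fall out immediately.
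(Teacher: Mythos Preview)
Your overall strategy matches the paper's: tensor-product smoothing of the orthant indicator, sandwich, then optimise over $\alpha$. The construction $H(x)=\prod_j h(x^{(j)})$ and the cost $\tfrac{N_m'}{\alpha^m}\,d_m(X,Y)$ for the $d_m$ term are exactly what the paper uses (via Lemma~\ref{prop2.13}).

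The gap is in your identification of the smoothing error. You write
\[
\mathbb{E}[H(Y)]-\mathbb{P}(Y\le z)\ \le\ \mathbb{P}(z\le Y\le z+\alpha),
\]
but this is false for $d\ge 2$. The function $H(y)-\mathbf{1}_{y\le z}$ is non-zero on the set $\mathcal{S}=\{y:y\le z+\alpha\}\setminus\{y:y\le z\}$, the difference of two orthants, \emph{not} the small cube $[z,z+\alpha]$. For instance the point $(z_1+\tfrac{\alpha}{2},\,z_2-M,\ldots,z_d-M)$ with $M$ large lies in $\mathcal{S}$ (since $H>0$ there while $\mathbf{1}_{y\le z}=0$) but not in the cube. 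Hence your small-box probability is an \emph{underestimate} of the true smoothing error, and the subsequent optimisation does not establish the claimed bound. In particular, the box hypothesis in parts (ii) and (iii) bounds $\int_{B(z,\delta)}p$, but that controls only the single piece $\mathcal{S}_{[d]}$, not the unbounded slabs making up the rest of $\mathcal{S}$.

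What the paper does instead is partition $\mathcal{S}=\bigcup_{J\in 2^{[d]}\setminus\{\emptyset\}}\mathcal{S}_J$, where $\mathcal{S}_J=\{y:z_j<y_j<z_j+\alpha\text{ for }j\in J,\ y_j\le z_j\text{ for }j\notin J\}$. On each $\mathcal{S}_J$ with $|J|=d'$, after integrating out the $J^c$-coordinates over $(-\infty,z_j]$ one is left with a $d'$-dimensional box integral, and summing gives $\sum_{i=1}^d C_i\alpha^i$ in case (i), $\sum_i C_i\alpha^{i-a}$ in case (iii), and so on. The leading contribution comes from $|J|=1$, not from $|J|=d$; this is exactly why the final exponent involves $1-a$ rather than $d-a$. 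Your argument skips this decomposition and therefore does not control the dominant slab terms; adding it is what is needed to make the proof complete.
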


\begin{remark}\label{remappl2} 
Like Propositions \ref{prop2.1} and \ref{prop2.2}, the bounds of Proposition \ref{prop2.4} are widely applicable. Part (i) of Proposition \ref{prop2.4} is applicable if the density of the random vector $Y$ is bounded. Part (ii) is applicable to a subclass of the multivariate variance-gamma distribution (also known as the generalized asymmetric multivariate Laplace distribution \cite[Chapter 6]{kkp01}). The bound of part (iii) also applies to another subclass of the multivariate variance gamma distribution, as well as the Dirichlet and inverted Dirichlet distributions.
\end{remark}

 When $Y$ is a multivariate normal random vector, we can bound $d_{\mathrm{K}}(X,Y)$ in terms of $d_m(X,Y)$ with an explicit constant. We are able to do so due to the following inequality.  
 
 \vspace{2mm}

\noindent{\emph{Nazarov's inequality \cite{n03}; Lemma A.1 in \cite{cck17} with detailed proof in \cite{cck17b}}: Let $Y =(Y_1,\ldots,Y_d)^T $ be a centered multivariate normal random vector in $\mathbb{R}^d$ such that $\mathbb{E}[Y_j^2] \geq \sigma^2$ for all $1 \leq j \leq d$ and some $\sigma > 0.$ Then, for every $z \in \mathbb{R}^d$ and $\alpha>0$,
\begin{equation}\label{eq:7}
\mathbb{P}(Y \leq z + \alpha) - \mathbb{P}(Y \leq z) \leq \frac{\alpha}{\sigma}(\sqrt{2 \log d}+2).
\end{equation}
The upper bound 
is of the optimal order with respect to the dimension $d$ (see \cite{b93}). 

Before stating the proposition, we introduce some notation. 
For a function $h:\mathbb{R}^d\rightarrow\mathbb{R}$, we abbreviate  $|h|_0:=\|h\|$ and $|h|_k:=\max_{ i_1,\ldots,i_k}\big\|\frac{\partial^k h(w)}{\prod^k_{j=1} \partial w_{i_j}}\big\|$, $k\geq1$. If $h:\mathbb{R}^d\rightarrow\mathbb{R}$ is such that $h^{(m-1)}$ is Lipschitz, 
then we understand $|h|_m$ to represent the largest Lipschitz constant of the functions $\frac{\partial^{m-1} h(w)}{\prod^{m-1}_{j=1} \partial w_{i_j}}$, $1\leq i_1,\ldots,i_{m-1}\leq d$. With this notation we introduce the following multivariate generalisations of the functions classes $\mathcal{H}_{[m]}$ and $\mathcal{H}_m$:
\begin{align*}\mathcal{H}_{[m]}&=\{h:\mathbb{R}^d\rightarrow\mathbb{R}\,:\,\text{$h^{(m-1)}$ is Lipschitz with $|h|_j\leq1$, $1\leq j\leq m$}\}, \\
\mathcal{H}_{m}&=\{h:\mathbb{R}^d\rightarrow\mathbb{R}\,:\,\text{$h^{(m-1)}$ is Lipschitz with $|h|_j\leq1$, $0\leq j\leq m$}\}
\end{align*} 
(note that we suppress the dependence on the dimension $d$ in our notation for these functions classes). We denote the integral probability metrics induced by these function classes by $d_{[m]}$ and $d_m$, respectively. As in the univariate case, we note that $d_m\leq d_{[m]}$ for all $m\geq1$ and that $d_{1}=d_{\mathrm{bW}}$ and $d_{[1]}=d_{\mathrm{W}}$. We also let $M_m'$ be given by $M_m'=2^{m-2}(m-1)!$ for $1\leq m\leq 4$, and $M_m'=2^{\frac{3m}{2}-2}(m-1)!$ for $m\geq5$, and let $N_m'=2^m M_m'$. 

\begin{proposition}\label{prop2.5}
Let $X$ be any random vector in $\mathbb{R}^d,$ and let $m\geq1$.  Let $Y\sim \mathrm{MVN}_d(\mu,\Sigma)$ be a multivariate normal random vector with mean vector $\mu \in \mathbb{R}^d$ and covariance matrix $\Sigma$. Suppose that $\sigma_{jj} = (\Sigma)_{jj} >0 $ for all $1 \leq j \leq d.$ Let $\sigma = \min_{1\leq j \leq d} \sigma_{jj}. $ Then, with the second bound holding if $d_m(X,Y) \leq \frac{2+\sqrt{2\log d}}{\sigma N_{m}'}$ (if $m\geq2$ this can be weakened to $d_m(X,Y) \leq \frac{4+2\sqrt{2\log d}}{\sigma M_{m}'}$),
\begin{align}\label{multin0}
d_{\mathrm{K}}(X,Y) &\leq 2 \bigg(\frac{\sqrt{2 \log d}+2}{\sigma}\bigg)^{\frac{m}{m+1}} \big(N_{m}' d_m(X,Y)\big)^{\frac{1}{m+1}}+M_m'd_m(X,Y), \\
\label{multin}
d_{\mathrm{K}}(X,Y)& \leq 2\bigg(\frac{\sqrt{2 \log d}+2}{\sigma}\bigg)^{\frac{m}{m+1}} \big(N_{m}' d_m(X,Y)\big)^{\frac{1}{m+1}}.
\end{align}
\end{proposition}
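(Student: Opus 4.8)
The plan is to follow the same smoothing strategy that underlies Propositions \ref{prop2.1} and \ref{prop2.2}, but replace the density estimates used there for real-valued $Y$ by Nazarov's inequality (\ref{eq:7}). The starting point is the general machinery from Section \ref{sec4} (which I may invoke): for a half-space indicator $\mathbf{1}_{\cdot\le z}$ on $\mathbb{R}^d$ one constructs a smooth test function $h_{z,\alpha}\in\mathcal{H}_m$ (after rescaling) that interpolates between $\mathbf{1}_{\cdot\le z}$ and $\mathbf{1}_{\cdot\le z+\alpha}$, with $m$-th order smoothness norm of size $O(\alpha^{-m})$; the relevant scaling constants are exactly the $M_m'$ and $N_m'$ introduced just before the statement (the change from $M_m$ to $M_m'$ for $m\ge5$ reflects the extra $d$-dependent factor one pays when differentiating a product-type mollifier in $d$ dimensions, but this is already baked into the definition of $M_m'$). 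The upshot is the sandwich inequality
\begin{equation*}
\mathbb{P}(X\le z)-\mathbb{P}(Y\le z) \le \mathbb{E}[h_{z,\alpha}(X)]-\mathbb{E}[h_{z,\alpha}(Y)] + \big(\mathbb{P}(Y\le z+\alpha)-\mathbb{P}(Y\le z)\big),
\end{equation*}
and a symmetric lower bound using $h_{z-\alpha,\alpha}$, where $\mathbb{E}[h(X)]-\mathbb{E}[h(Y)]\le N_m'\alpha^{-m}\,d_m(X,Y)$ after accounting for the normalisation of $h$ into $\mathcal{H}_m$ (and the sharper $M_m'\alpha^{-m}$ bound when one uses the $d_{[m]}$-type estimate, which is what yields the weaker validity condition in the $m\ge2$ case).

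Next I would insert Nazarov's inequality to control the probability term. Since $Y-\mu\sim\mathrm{MVN}_d(0,\Sigma)$ is centered with $\mathbb{E}[(Y_j-\mu_j)^2]=\sigma_{jj}\ge\sigma$, inequality (\ref{eq:7}) gives $\mathbb{P}(Y\le z+\alpha)-\mathbb{P}(Y\le z)\le \frac{\alpha}{\sqrt\sigma}(\sqrt{2\log d}+2)$ for every $z$; note one must use $\sqrt\sigma$ here because Nazarov's hypothesis is stated with $\mathbb{E}[Y_j^2]\ge\sigma^2$, so the role of ``$\sigma$'' in (\ref{eq:7}) is played by $\sqrt{\sigma_{jj}}\ge\sqrt\sigma$. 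Combining the two pieces and taking the supremum over $z$,
\begin{equation*}
d_{\mathrm{K}}(X,Y) \le \frac{N_m'}{\alpha^m}\,d_m(X,Y) + \frac{\alpha}{\sqrt\sigma}\big(\sqrt{2\log d}+2\big).
\end{equation*}
Then I optimise over $\alpha>0$: the right-hand side is minimised at $\alpha_*=\big(m\sqrt\sigma\,N_m' d_m(X,Y)/(\sqrt{2\log d}+2)\big)^{1/(m+1)}$, and substituting $\alpha_*$ and simplifying the resulting $m^{m/(m+1)}+m^{-1/(m+1)}$ constant by the crude bound $\le 2$ produces exactly (\ref{multin0}) — with the $M_m'd_m(X,Y)$ additive term arising, as in Proposition \ref{prop2.2}, from the fact that $h_{z,\alpha}$ is not literally bounded by the indicator but overshoots by a controlled amount on a set of the appropriate size, or equivalently from bounding $\mathbb{E}[h(X)]-\mathbb{E}[h(Y)]$ slightly more carefully.

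For the second, cleaner bound (\ref{multin}), the idea is that under the stated smallness hypothesis on $d_m(X,Y)$ the optimal $\alpha_*$ is at most some threshold, and for $\alpha$ below that threshold one can use a sharper version of the smoothing step in which the approximating function is chosen to genuinely lie between the two indicators, eliminating the $M_m'd_m(X,Y)$ correction entirely; the condition $d_m(X,Y)\le (2+\sqrt{2\log d})/(\sigma N_m')$ (resp. the weaker $m\ge2$ version with $M_m'$ and an extra factor) is precisely what makes $\alpha_*\le 1$ or the analogous constraint hold so that this sharper construction is available. This is the same dichotomy already present between Propositions \ref{prop2.1} and \ref{prop2.2}, so I would cross-reference that argument rather than repeat it. The main obstacle — and the only genuinely new ingredient relative to the univariate propositions — is verifying that the $d$-dimensional mollified indicator has its smoothness norms controlled by the $M_m'$ constants (the $2^{3m/2-2}$ growth for $m\ge5$), i.e. that differentiating a tensor-product smoothing kernel $m$ times costs the claimed factor; everything downstream is the now-routine sandwich-plus-optimise computation. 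I would therefore spend the bulk of the write-up either citing the Section \ref{sec4} construction directly or checking this norm bound, and keep the optimisation over $\alpha$ terse.
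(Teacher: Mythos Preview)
Your overall strategy---smooth the indicator by the $d$-variate construction of Section~\ref{sec4.2}, sandwich, apply Nazarov's inequality, then optimise in $\alpha$---is exactly the paper's proof. Two points need correction, though.

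First, your account of where the additive $M_m'd_m(X,Y)$ term comes from, and of why it disappears under the smallness hypothesis, is wrong. It has nothing to do with ``overshoot'' or with choosing a different smoothing function that ``genuinely lies between the two indicators''. The function $h_{m,z,\alpha}\in\mathcal{F}_{m,z,\alpha,d}$ \emph{always} satisfies $\mathbf{1}_{\cdot\le z}\le h_{m,z,\alpha}\le\mathbf{1}_{\cdot\le z+\alpha}$; that is built into the definition of the class. The dichotomy comes instead from Lemma~\ref{prop2.13}: for arbitrary $\alpha>0$ one has $\max_{0\le i\le m}|h_{m,z,\alpha}|_i\le M_m'\big(1+(2/\alpha)^m\big)$, which after rescaling into $\mathcal{H}_m$ yields $\big(M_m'+\frac{N_m'}{\alpha^m}\big)d_m(X,Y)$ for the smooth term; whereas for $0<\alpha\le1$ (or $\alpha\le2$ when $m\ge2$, since then $\max_i(2/\alpha)^i$ is still attained at $i=m$) the sharper bound $\max_i|h_{m,z,\alpha}|_i\le N_m'/\alpha^m$ holds, giving just $\frac{N_m'}{\alpha^m}d_m(X,Y)$. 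The hypothesis on $d_m(X,Y)$ is precisely what forces the chosen $\alpha=(\sigma N_m' d_m(X,Y)/(\sqrt{2\log d}+2))^{1/(m+1)}$ to satisfy $\alpha\le1$ (resp.\ $\alpha\le2$), so that the second derivative bound is available. This is the same mechanism as in the passage from Proposition~\ref{prop2.2} to Proposition~\ref{prop2.1}, but your description of that mechanism is off.

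Second, your $\sqrt\sigma$ versus $\sigma$ point is a sharp reading of the hypotheses---Nazarov's inequality (\ref{eq:7}) is indeed phrased with a standard-deviation lower bound, while $\sigma$ in Proposition~\ref{prop2.5} is a minimum variance---but the paper's proof simply writes $\frac{\alpha}{\sigma}(\sqrt{2\log d}+2)$ and the stated bounds (\ref{multin0})--(\ref{multin}) carry $\sigma$, not $\sqrt\sigma$. If you follow through with $\sqrt\sigma$ you will not reproduce the proposition as stated; this appears to be a notational inconsistency in the paper rather than a flaw in your argument, but you should flag it rather than silently change the bound. Also, the paper does not perform the exact minimisation in $\alpha$: it simply takes $\alpha=(\sigma N_m' d_m(X,Y)/(\sqrt{2\log d}+2))^{1/(m+1)}$, which equalises the two terms and gives the constant $2$ directly, so there is no need for your estimate on $m^{m/(m+1)}+m^{-1/(m+1)}$ (which, incidentally, is not the constant that arises from the true minimiser).
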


\begin{remark}(1): The assumption $d_m(X,Y) \leq \frac{2+\sqrt{2\log d}}{\sigma M_{m}'}$ for inequality (\ref{multin}) is mild.  Indeed, applying inequality (\ref{multin}) with $d_m(X,Y) = \frac{2+\sqrt{2\log d}}{\sigma M_{m}'}$ yields the bound $d_\mathrm{K}(X,Y)\leq \frac{2(\sqrt{2\log d}+2)}{\sigma}$, which is a trivial bound if $\sigma\leq 2(\sqrt{2\log d}+2)$.

\vspace{2mm}


\noindent{(2):} Since $d_{1}\leq d_{[1]}$, inequality (\ref{multin}) generalises a bound of \cite{k19} (which improved on a bound of \cite{app16}) that bounds $d_{\mathrm{K}}(X,Y)$ in terms of $d_{\mathrm{W}}(X,Y)=d_{[1]}(X,Y)$ when $Y$ is multivariate normal random vector. 
\end{remark}

\section{Examples}\label{sec3}

\rg{In this section, we provide several applications of the general bounds of Section \ref{sec2}.
These examples are chosen because of their inherent interest and also to 
serve as useful illustrations in the application of the general bounds in particular settings. Indeed, our examples have been chosen to demonstrate the broad applicability of our general results; we consider a multivariate example (multivariate normal approximation in Section \ref{sec3.2}), a distribution with multiple singularities (beta approximation in Section \ref{sec3.3}), and a distribution for which the density is either bounded or has a logarithmic or power law singularity depending on the parameter values (variance-gamma approximation in Section \ref{sec3.5}). Our general bounds are by no means restricted to these distributional approximations, and this section serves as an example of how efficiently one can use our bounds to extract Kolmogorov distance bounds for distributional approximations in which this would otherwise by technically demanding to do so, with the cost of a typically sub-optimal rate. In Section \ref{sec3.1}, we also provide an application that demonstrates that our bounds have utility beyond the derivation of Kolmogorov distance bounds, by enabling us to provide an efficient proof of a result concerning the regularity of the solution of the standard normal Stein equation, a fundamental object in Stein's method.
}


\subsection{The solution of the standard normal Stein equation}\label{sec3.1}

The example in this section differs from those of Sections \ref{sec3.2}--\ref{sec3.5} in that we do not apply the bounds of Section \ref{sec2} to bound Kolmogorov distance between two probability distributions. Instead, we use Proposition \ref{prop2.1} to prove a \rg{result, that to the best of our knowledge has not previously been stated in the literature,}
for the solution of the standard normal Stein equation, an object that lies at the heart of Stein's method for normal approximation. To motivate our result, we recall that, for a suitable real-valued test function $h:\mathbb{R} \rightarrow \mathbb{R}, $ the function
\begin{equation}\label{steinsoln}
f_h(x) = e^{\frac{x^2}{2}} \int^x_{-\infty}e^{-\frac{t^2}{2}} \{h(t) - \mathbb{E}[h(Z)] \}\, dt
\end{equation}
solves the standard normal Stein equation $f'(x) -xf(x) = h(x) - \mathbb{E}[h(Z)] $, where $Z \sim N(0,1)$ (see \cite{stein,stein2}). Let $k \geq 0$, and write $h^{(0)} \equiv 0. $ If $h^{(k)}$ is Lipschitz, then the following bounds hold (see \cite{gr96, gaunt gaunt rates,daly08}, respectively): for $k \geq 0$,
\begin{equation*}
 \| f_{\rg{h}}^{(k)} \| \leq \frac{\| h^{(k+1)} \|}{k+1}, \quad \| f_{\rg{h}}^{(k+1)} \| \leq \frac{\Gamma(\frac{k+1}{2})  \| h^{(k+1)} \|}{\sqrt{2} 
  \Gamma(\frac{k}{2}+1) },
 \quad \|f_{\rg{h}}^{(k+2)} \| \leq 2 \|h^{(k+1)} \|. 
\end{equation*}	
It is natural to ask whether bounds of the form $\|f^{(k+r)} \| \leq C_{k,r} \| h^{(k+1)} \| $ hold for $r \geq 3.$ The following proposition asserts that this is not possible for $r \geq 4.$ We expect that this is also not possible for $r\geq3$, but to prove this a more refined analysis would be required. It is, however, interesting to note how efficiently we are able to prove Proposition \ref{prop2.8} using Proposition \ref{prop2.1}, and we also note that similar arguments could in principle be applied to prove analogues of Proposition \ref{prop2.8} for solutions of other Stein equations.

\begin{proposition}
Let $f_h$ denote the solution (\ref{steinsoln}) of the standard normal Stein equation. Let $k \geq 0$ and $r \geq 4.$ Then there does not exist a constant $C_{k,r} > 0 $ such that $\|f_{\rg{h}}^{(k+r)} \| \leq  C_{k,r} \|h^{(k+1)} \| $ for all $h: \mathbb{R} \rightarrow \mathbb{R} $ such that $h^{(k)}$ is Lipschitz.
\end{proposition}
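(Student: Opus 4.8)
The plan is to argue by contradiction: suppose such a constant $C_{k,r}$ exists for some $k\geq0$ and some $r\geq4$. The strategy is to use this hypothetical bound to derive a Kolmogorov distance bound that is too strong to be true, thereby reaching a contradiction. First I would set up the link between the Stein solution bounds and an integral probability metric. For a test function $h$ with $h^{(k)}$ Lipschitz and appropriate normalisation on the derivatives, the Stein equation $f_h'(x)-xf_h(x)=h(x)-\mathbb{E}[h(Z)]$ combined with the supposed bound $\|f_h^{(k+r)}\|\leq C_{k,r}\|h^{(k+1)}\|$ and the known bounds on $\|f_h^{(j)}\|$ for $j\leq k+2$ would give, via differentiating the Stein equation $k+r-1$ times and collecting terms, control of $\|f_h^{(j)}\|$ for all $0\leq j\leq k+r$ in terms of $\|h^{(k+1)}\|$ alone. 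The key consequence I am after is that, for $h\in\mathcal{H}_{k+1}$ (so in particular $\|h^{(j)}\|\leq1$ for $0\leq j\leq k+1$), one obtains, for any pair $X,Z$, a bound of the form $|\mathbb{E}[h(X)]-\mathbb{E}[h(Z)]|=|\mathbb{E}[f_h'(X)-Xf_h(X)]|$ expressible through the $d_{m}$-distance for a suitable $m$ — the point being that the hypothetical regularity of $f_h$ lets one integrate by parts / use the smoothness in a way that trades derivatives on $f_h$ for derivatives on the test functions probing $X$, ultimately yielding an inequality of the shape $d_{\mathrm{K}}(X,Z)\leq C\,(d_{m}(X,Z))^{q}$ with an exponent $q$ strictly larger than $\tfrac1m$.

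Next I would invoke the sharpness statement flagged in Remark 2.6(2) of the excerpt — namely that in any general bound $d_{\mathrm{K}}(X,Y)\leq C(d_m(X,Y))^q$ valid for all $X$ (with $Y$, say, standard normal, whose density is bounded), the exponent must satisfy $q\leq\tfrac1m$. A hypothetical bound $\|f_h^{(k+r)}\|\leq C_{k,r}\|h^{(k+1)}\|$ for $r\geq4$ would, through the mechanism above, produce a general Kolmogorov-to-smooth-Wasserstein inequality with exponent exceeding $\tfrac1m$ for the relevant $m$, which is impossible. The arithmetic to check is exactly that $r\geq4$ is the threshold: with $k+1$ derivatives available on $h$ and $k+r$ derivatives of regularity on $f_h$, one gains $r-2$ effective orders of smoothing beyond what the classical bounds ($r=0,1,2$) already provide, and $r\geq4$ is precisely what makes the resulting exponent violate the $q\leq\tfrac1m$ ceiling, whereas $r=3$ is borderline (consistent with the remark that the $r=3$ case would need a more refined analysis). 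I would then spell out the contradiction and conclude.

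\textbf{Main obstacle.} The delicate part is the bookkeeping in the first step: converting a pointwise bound on a high-order derivative of the Stein solution into a clean statement of the form $d_{\mathrm{K}}(X,Z)\leq C(d_m(X,Z))^q$ with an explicit $q>\tfrac1m$. One must decide exactly which smooth test-function class the argument naturally produces (this fixes $m$), track how the auxiliary bounds on $\|f_h^{(j)}\|$ for $j\leq k+2$ and the polynomial factor $x$ in the Stein equation interact when differentiating repeatedly, and ensure the normalisation of $h$ is handled consistently so that the exponent count is airtight. Once that reduction is in place, the contradiction with the $q\leq\tfrac1m$ bound (which itself rests on Proposition 2.1(i) applied to the normal distribution together with a matching example, as indicated in Remark 2.6) is immediate. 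I would expect the final write-up to be short precisely because all the heavy lifting is outsourced to Proposition 2.1 and the sharpness remark.
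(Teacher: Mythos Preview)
Your overall shape (argue by contradiction, feed the hypothetical bound through Proposition~2.1(i), and compare with something sharp) matches the paper's, but the crucial first step does not work as you describe it. You propose to turn the hypothetical estimate $\|f_h^{(k+r)}\|\le C_{k,r}\|h^{(k+1)}\|$ into a \emph{general} inequality $d_{\mathrm K}(X,Z)\le C\,(d_m(X,Z))^q$ valid for all $X$, by ``differentiating the Stein equation'' and ``trading derivatives on $f_h$ for derivatives on the test functions probing $X$''. But bounds on the derivatives of $f_h$ for \emph{smooth} $h$ tell you nothing about $d_{\mathrm K}(X,Z)$, which is probed by indicator test functions; and the identity $\mathbb E[h(X)]-\mathbb E[h(Z)]=\mathbb E[f_h'(X)-Xf_h(X)]$ cannot be bounded in terms of $d_m(X,Z)$ for an arbitrary $X$ without some coupling or structural assumption. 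No amount of regularity on $f_h$ manufactures such a bound on its own. Relatedly, the $q\le 1/m$ statement you want to invoke is itself proved (Remark~3.3) via the very same i.i.d.\ construction, so appealing to it does not bypass the missing ingredient.

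What the paper actually does is supply that missing ingredient: a \emph{specific} family of random variables $W_n=n^{-1/2}\sum_{i=1}^n X_i$ with $\mathbb E[X_1^j]=\mathbb E[Z^j]$ for $1\le j\le p$, for which one has the lemma
\[
|\mathbb E[h(W_n)]-\mathbb E[h(Z)]|\;\le\;\frac{\|f_h^{(p)}\|}{n^{(p-1)/2}}\cdot\text{const}.
\]
Taking $p=k+r$ and using the hypothetical $\|f_h^{(k+r)}\|\le C_{k,r}\|h^{(k+1)}\|$ for $h\in\mathcal H_{k+1}$ gives $d_{k+1}(W_n,Z)=O(n^{-(k+r-1)/2})$. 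Proposition~2.1(i) with $m=k+1$ then yields $d_{\mathrm K}(W_n,Z)=O(n^{-(k+r-1)/(2(k+2))})$. Choosing $X_1$ to be, say, a lattice distribution with the required moment matching forces $d_{\mathrm K}(W_n,Z)\gtrsim n^{-1/2}$, so one needs $(k+r-1)/(k+2)\le 1$, i.e.\ $r\le 3$; hence $r\ge 4$ is impossible. The whole argument is three lines once the lemma is in hand. Your plan is salvageable, but only by inserting this concrete i.i.d.\ family and its bound; the abstract route you sketch does not go through.
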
\label{prop2.8}

\rg{\begin{remark} We expect that a stronger result holds in that, under the assumptions of Proposition \ref{prop2.8}, the function $f_h^{(k+r-1)}$ is not Lipschitz for $k\geq0$ and $r\geq4$. Moreover, in light of the discussion proceeding Proposition \ref{prop2.8}, we except that $f_h^{(k+r-1)}$ is not Lipschitz for $k\geq0$ and $r\geq3$.
\end{remark}
}

We will need the following lemma, which can be read off as an intermediate bound in the proof of Theorem 3.1 of \cite{gaunt gaunt rates}.  

\begin{lemma}[Gaunt \cite{gaunt gaunt rates}]
Let $X_1,\ldots,X_n$ be independent and identically distributed random variables such that $\mathbb{E}[|X_1|^{p+1}]<\infty,$ $p \geq 2.$ Suppose that $\mathbb{E}[X_1^k] =\mathbb{E}[Z^k] $ for $1 \leq k \leq p.$ Define $W_n = \frac{1}{\sqrt{n}} \sum^n_{i=1} X_i.$ Let $f_h$ be the solution (\ref{steinsoln}) of the standard normal Stein equation. Then 
\begin{equation}\label{iid example}
|\mathbb{E}[h(W_n)]-\mathbb{E}[h(Z)]| \leq \frac{\|f_h^{(p)} \|}{n^{\frac{p-1}{2}}}\bigg\{\frac{\mathbb{E}[|X_1|^{p-1}]}{(p-1)!}+\frac{\mathbb{E}[|X_1|^{p+1}]}{p!}\bigg\}.
\end{equation}
\end{lemma}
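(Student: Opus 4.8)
The plan is to prove the bound \eqref{iid example} by the standard Stein-method swapping argument, but carried out to higher order: instead of replacing the summands of $W_n$ one at a time and controlling the error via a first-order Taylor expansion of $f_h$, I would expand to order $p$ and exploit the matching-moments hypothesis $\mathbb{E}[X_1^k]=\mathbb{E}[Z^k]$ for $1\le k\le p$ to kill all the intermediate Taylor terms. Concretely, fix $h$ with $h^{(k)}$ Lipschitz (so that $f_h^{(p)}$ exists and $\|f_h^{(p)}\|<\infty$ by the regularity bounds recalled just before the proposition, using $p\ge2$), and write $f=f_h$. Since $f$ solves $f'(x)-xf(x)=h(x)-\mathbb{E}[h(Z)]$, we have $\mathbb{E}[h(W_n)]-\mathbb{E}[h(Z)]=\mathbb{E}[f'(W_n)-W_nf(W_n)]$. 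The first step is therefore to massage $\mathbb{E}[W_nf(W_n)]$ using independence and identical distribution: writing $W_n=W_n^{(i)}+X_i/\sqrt n$ with $W_n^{(i)}=\frac1{\sqrt n}\sum_{j\ne i}X_j$ independent of $X_i$, we get
\begin{equation*}
\mathbb{E}[W_nf(W_n)]=\frac{1}{\sqrt n}\sum_{i=1}^n\mathbb{E}[X_if(W_n)]=\sqrt n\,\mathbb{E}\!\left[X_1 f\!\left(W_n^{(1)}+\tfrac{X_1}{\sqrt n}\right)\right].
\end{equation*}

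The second step is the Taylor expansion. Expand $f\big(W_n^{(1)}+X_1/\sqrt n\big)$ around $W_n^{(1)}$ to order $p-1$ with integral (or Lagrange) remainder, multiply by $X_1$, and take expectations, using independence of $X_1$ and $W_n^{(1)}$ to factor $\mathbb{E}[X_1^{k+1}]$ against $\mathbb{E}[f^{(k)}(W_n^{(1)})]$ for $0\le k\le p-1$. Do the analogous thing for $\mathbb{E}[f'(W_n)]=\mathbb{E}[f'(W_n^{(1)}+X_1/\sqrt n)]$, expanding $f'$ to order $p-2$. The key point is that the low-order terms cancel: the term in $\mathbb{E}[f'(W_n)]$ of order $x^{j}/n^{j/2}$ involves $\mathbb{E}[X_1^{j}]$, the term in $\sqrt n\,\mathbb{E}[X_1 f(\cdots)]$ of the matching order involves $\mathbb{E}[X_1^{j+1}]/n^{(j+1)/2}\cdot\sqrt n$, and the Stein identity for $Z$ (equivalently, the relation $\mathbb{E}[Z^{k+1}]=k\,\mathbb{E}[Z^{k-1}]$) together with $\mathbb{E}[X_1^k]=\mathbb{E}[Z^k]$, $k\le p$, makes these coincide term by term. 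This is exactly the mechanism by which matching $p$ moments yields an $n^{-(p-1)/2}$ rate; I would organise the bookkeeping so that after cancellation only the two remainder terms survive — one of order $p-1$ coming from the $W_nf(W_n)$ piece (contributing $\mathbb{E}[|X_1|^{p-1}]$... wait, rather the remainder is of order $p$ in the expansion of $f$, i.e.\ a term $\frac{X_1^{p}}{p!\,n^{p/2}}f^{(p)}(\xi)$ multiplied by $X_1$ and by $\sqrt n$), and one from the $f'(W_n)$ piece.

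The third step is to bound the two remainder terms. After the cancellations, the surviving error is a sum of two terms of the shape $\sqrt n\cdot\frac{1}{(p-1)!}\mathbb{E}\big[|X_1|^{p}\big]\cdot\frac{\|f^{(p)}\|}{n^{p/2}}$ and $\frac{1}{(p-1)!}\mathbb{E}\big[|X_1|^{p-1}\big]\cdot\frac{\|f^{(p)}\|}{n^{(p-1)/2}}$ — after collecting the $n$-powers one gets exactly $n^{-(p-1)/2}$ in both, and matching the combinatorial factorials $(p-1)!$ and $p!$ gives the bracket $\big\{\frac{\mathbb{E}[|X_1|^{p-1}]}{(p-1)!}+\frac{\mathbb{E}[|X_1|^{p+1}]}{p!}\big\}$ as claimed (the $\mathbb{E}[|X_1|^{p+1}]$, rather than $\mathbb{E}[|X_1|^{p}]$, arises because one of the remainders carries an extra factor of $X_1$ from the $X_1 f(\cdots)$ structure, and $\mathbb{E}[|X_1|^{p+1}]<\infty$ is exactly the hypothesis ensuring it is finite). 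Both remainders are controlled purely by $\|f^{(p)}\|$ because Taylor's theorem only needs the top derivative; this is where the stated assumption $\mathbb{E}[|X_1|^{p+1}]<\infty$ is used.

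The main obstacle is the second step: carrying out the higher-order Taylor expansion on both $\mathbb{E}[f'(W_n)]$ and $\mathbb{E}[W_nf(W_n)]$ and verifying that \emph{every} intermediate term cancels, which requires repeatedly invoking the Gaussian recursion $\mathbb{E}[Z^{k+1}]=k\mathbb{E}[Z^{k-1}]$ (equivalently integrating the Stein equation against $Z$) and carefully tracking factorials and powers of $n$. Since the statement is quoted verbatim as "an intermediate bound in the proof of Theorem 3.1 of \cite{gaunt gaunt rates}", I would, in the interest of brevity, either reproduce this bookkeeping in a compact displayed computation or simply cite \cite{gaunt gaunt rates} for the detailed verification; the conceptual content is entirely the moment-matching cancellation described above.
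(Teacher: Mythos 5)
Your sketch is essentially correct and is precisely the argument behind the cited result: the paper itself offers no proof of this lemma, stating only that it "can be read off as an intermediate bound in the proof of Theorem 3.1 of \cite{gaunt gaunt rates}", and what you describe — writing $\mathbb{E}[h(W_n)]-\mathbb{E}[h(Z)]=\mathbb{E}[f'(W_n)-W_nf(W_n)]$, using the leave-one-out decomposition $W_n=W_n^{(1)}+X_1/\sqrt n$, Taylor expanding $f$ to order $p-1$ (then multiplying by $X_1$ and $\sqrt n$) and $f'$ to order $p-2$, and cancelling all intermediate terms via $\mathbb{E}[X_1^k]=\mathbb{E}[Z^k]$, $k\leq p$, together with the Gaussian recursion $\mathbb{E}[Z^{k+2}]=(k+1)\mathbb{E}[Z^{k}]$ — is exactly the mechanism in that reference, and the two surviving remainders do give $\frac{\|f_h^{(p)}\|}{n^{(p-1)/2}}\big\{\frac{\mathbb{E}[|X_1|^{p-1}]}{(p-1)!}+\frac{\mathbb{E}[|X_1|^{p+1}]}{p!}\big\}$. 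One small slip: the displayed "shape" of the first surviving remainder in your third step ($\sqrt n\,\frac{\mathbb{E}[|X_1|^p]}{(p-1)!}\frac{\|f^{(p)}\|}{n^{p/2}}$) is not the correct term — the remainder from the $W_nf(W_n)$ piece is $\sqrt n\,\mathbb{E}[|X_1|\cdot|R_1|]\leq\frac{\mathbb{E}[|X_1|^{p+1}]}{p!\,n^{(p-1)/2}}\|f^{(p)}\|$ — but your parenthetical correction (extra factor of $X_1$, factorial $p!$) already fixes this, so the final bracket is right. Deferring the term-by-term bookkeeping to a compact computation or to the citation is acceptable here, since the paper itself handles the lemma purely by citation.
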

 \noindent{\emph{Proof of Proposition \ref{prop2.8}.}} 
Let $h \in \mathcal{H}_{p-t}$ for $t \geq 0.$ Suppose there exists a constant $C_p>0$, such that $\|f_h^{(p)} \| \leq C_p\| h^{(p-t)} \|. $ As $h \in \mathcal{H}_{p-t},$ $\| f_h^{(p)} \| \leq C_p$ and inserting this inequality into (\ref{iid example}) gives the bound
\begin{equation*}
d_{p-t} (W_n,Z) \leq \frac{C_p}{n^{\frac{p-1}{2}}}\bigg\{\frac{\mathbb{E}[|X_1|^{p-1}]}{(p-1)!}+\frac{\mathbb{E}[|X_1|^{p+1}]}{p!}\bigg\}.
\end{equation*} 
Therefore, by the bound of part (i) of Proposition \ref{prop2.1}, we obtain a bound for $d_{\mathrm{K}}(W_n,Z)$ that is order $n^{-\frac{p-1}{2(p-t+1)}}.$ To be consistent with the optimal Berry-Esseen rate of convergence of order $n^{-\frac{1}{2}}$ we require that $t \leq 2$; that is if $t \geq 3$ we have a contradiction.
\hfill $\Box$

\begin{remark}\label{remnearopt} The argument used to prove Proposition \ref{prop2.8} can be 
reversed to prove that, under the assumptions of part (i) of Proposition \ref{prop2.1}, the exponent $q$ in the inequality $d_{\mathrm{K}}(X,Y)\leq C(d_m(X,Y))^q$ must satisfy $q\leq\frac{1}{m}$, for all $m\geq1$. Indeed, applying this inequality to (\ref{iid example}) with $p=m+1\geq2$ and the bound $\|f_{\rg{h}}^{(m+1)}\|\leq 2\|h^{(m)}\|$ of \cite{daly08} gives that $d_{\mathrm{K}}(W_n,Z)\leq C(d_m(W_n,Z))^q\leq C'n^{-\frac{mq}{2}}$, and to be consistent with the $n^{-\frac{1}{2}}$ Berry-Esseen rate we require that $q\leq\frac{1}{m}$.
\end{remark}

\subsection{Multivariate normal approximation}\label{sec3.2}
In the Stein's method literature, there are many examples of bounds given with respect to the $d_m$ metrics for multivariate normal approximation to which Proposition \ref{prop2.5} could be applied to deduce Kolmogorov distance bounds. Out of this extensive literature, we elect to give an application to a widely-used general bound of \cite{Reinert Multivariate}.

\begin{theorem}[Reinert and R\"ollin \cite{Reinert Multivariate}]\label{prop2.7} Suppose that $(W,W')$ is an exchangeable pair of $\mathbb{R}^d$-valued random vectors such that $ \mathbb{E}[W] = 0$ and $ \mathbb{E}[WW^T] = \Sigma$,
with $\Sigma \in \mathbb{R}^{d \times d}$ symmetric and positive definite. Suppose further that $\mathbb{E}^W[W'-W]  = - \Lambda W+R $, where $\Lambda$ is an invertible $d \times d$ matrix and $R$ is a $\sigma(W)-$measurable remainder random vector. Let $Y\sim\mathrm{MVN}_d(0,\Sigma)$. Then, for every three times differentiable function $h$,  
\begin{equation*}
|\mathbb{E}[h(W)]-\mathbb{E}[h(Y)]| \leq \frac{|h|_2}{4}A+\frac{|h|_3}{12}B+\bigg(|h|_1 + \frac{1}{2}d \|\Sigma \|^{\frac{1}{2}} |h|_2 \bigg)C,
\end{equation*} 
where $\|\Sigma\|$ is the supremum norm of the matrix $\Sigma$, and also $\lambda^{(i)}= \sum^d_{m=1}|(\Lambda^{-1})_{m,i}|,$ 
\begin{equation*}
\begin{split}
 A &= \sum^d_{i,j=1} \lambda^{(i)} \sqrt{\mathrm{Var}(\mathbb{E}^W[(W_i'-W_i)(W_j'-W_j)])},
 \\ B & = \sum^d_{i,j,k=1} \lambda^{(i)} \mathbb{E}[|(W_i'-W_i)(W_j'-W_j)(W_k'-W_k)|], \quad C  = \sum^d_{i=1} \lambda^{(i)} \sqrt{\mathrm{Var} (R_i)}.
\end{split}
\end{equation*}
\end{theorem}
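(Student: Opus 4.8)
\vspace{2mm}
\noindent{\emph{Sketch of proof.}} The plan is to run Stein's method of exchangeable pairs for multivariate normal approximation. Since $Y\sim\mathrm{MVN}_d(0,\Sigma)$, the relevant Stein operator is $\mathcal{L}_\Sigma f(w)=\sum_{i,j=1}^d\sigma_{ij}\partial_{ij}f(w)-\sum_{i=1}^d w_i\partial_i f(w)$, the generator of the Ornstein--Uhlenbeck process with invariant law $\mathrm{MVN}_d(0,\Sigma)$, and, for $h$ with bounded derivatives up to order $3$, the equation $\mathcal{L}_\Sigma f=h-\mathbb{E}[h(Y)]$ is solved by
\begin{equation*}
f_h(w)=-\int_0^1\frac{1}{2t}\Big(\mathbb{E}\big[h\big(\sqrt{t}\,w+\sqrt{1-t}\,Y\big)\big]-\mathbb{E}[h(Y)]\Big)\,dt.
\end{equation*}
Differentiating under the integral sign, each of the $k$ derivatives in $w$ brings down a factor $\sqrt{t}$, which gives $|f_h|_k\le|h|_k/k$ for $k=1,2,3$ (the multivariate analogues of the bounds $\|f_h^{(k)}\|\le\|h^{(k)}\|/k$ recalled in Section \ref{sec3.1}). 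As $\mathbb{E}[h(W)]-\mathbb{E}[h(Y)]=\mathbb{E}[\mathcal{L}_\Sigma f_h(W)]$, it suffices to bound the right-hand side; from now on I write $f:=f_h$ and $D:=W'-W$.

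First I would reconstruct $\mathcal{L}_\Sigma f$ from the exchangeable pair. The linearity condition gives $-W=\Lambda^{-1}\mathbb{E}^W[D]-\Lambda^{-1}R$, so, taking expectations and using the tower property,
\begin{equation*}
-\mathbb{E}\Big[\sum_i W_i\partial_i f(W)\Big]=\mathbb{E}\Big[\sum_{i,j}(\Lambda^{-1})_{ij}D_j\partial_i f(W)\Big]-\mathbb{E}\Big[\sum_{i,j}(\Lambda^{-1})_{ij}R_j\partial_i f(W)\Big].
\end{equation*}
Since $(W,W')$ is exchangeable, $\sum_{i,j}(\Lambda^{-1})_{ij}(W'_j-W_j)\big(\partial_i f(W)+\partial_i f(W')\big)$ is an antisymmetric function of $(W,W')$ and so has zero mean; hence $\partial_i f(W)$ may be replaced by $\tfrac12\big(\partial_i f(W)-\partial_i f(W')\big)$, and a first-order Taylor expansion of $\partial_i f(W')$ about $W$, with a second-order Lagrange remainder involving the third-order derivatives of $f$, turns the first term on the right-hand side into $-\tfrac12\mathbb{E}\big[\sum_{i,j,k}(\Lambda^{-1})_{ij}D_jD_k\partial_{ik}f(W)\big]$ plus the corresponding remainder. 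Conditioning on $W$ replaces $D_jD_k$ by $\mathbb{E}^W[D_jD_k]$, and the identity $\mathbb{E}[DD^T]=-2\mathbb{E}[DW^T]=2\Lambda\Sigma-2\mathbb{E}[RW^T]$ --- a consequence of exchangeability (via a second antisymmetric function of $(W,W')$) together with the linearity condition and $\mathbb{E}[WW^T]=\Sigma$ --- shows that $\tfrac12\Lambda^{-1}\mathbb{E}[DD^T]=\Sigma-\Lambda^{-1}\mathbb{E}[RW^T]$, so the ``mean part'' of the reconstructed Hessian term cancels the term $\mathbb{E}\big[\sum_{i,k}\sigma_{ik}\partial_{ik}f(W)\big]$ of $\mathbb{E}[\mathcal{L}_\Sigma f(W)]$ exactly. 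Collecting everything leaves
\begin{align*}
\mathbb{E}[h(W)]-\mathbb{E}[h(Y)]&=\sum_{i,k}\big(\Lambda^{-1}\mathbb{E}[RW^T]\big)_{ik}\mathbb{E}[\partial_{ik}f(W)]\\
&\quad-\tfrac12\mathbb{E}\Big[\sum_{i,j,k}(\Lambda^{-1})_{ij}\big(\mathbb{E}^W[D_jD_k]-\mathbb{E}[D_jD_k]\big)\partial_{ik}f(W)\Big]\\
&\quad-\tfrac14\mathbb{E}\Big[\sum_{i,j,k,l}(\Lambda^{-1})_{ij}D_jD_kD_l\,\partial_{ikl}f(\widetilde W)\Big]-\mathbb{E}\Big[\sum_{i,j}(\Lambda^{-1})_{ij}R_j\partial_i f(W)\Big],
\end{align*}
where $\widetilde W$ lies on the segment joining $W$ and $W'$.

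It remains to bound these four terms, using the derivative bounds, the Cauchy--Schwarz inequality (note that $\mathbb{E}[W]=0$ forces $\mathbb{E}[R]=0$, so $\mathbb{E}|R_j|\le\sqrt{\mathrm{Var}(R_j)}$ and $\mathbb{E}|R_jW_k|\le\sqrt{\sigma_{kk}\,\mathrm{Var}(R_j)}\le\|\Sigma\|^{1/2}\sqrt{\mathrm{Var}(R_j)}$), and $\sum_i|(\Lambda^{-1})_{ij}|=\lambda^{(j)}$. The conditional-fluctuation term is at most $\tfrac12|f|_2\sum_{j,k}\lambda^{(j)}\sqrt{\mathrm{Var}(\mathbb{E}^W[D_jD_k])}=\tfrac12|f|_2A\le\tfrac14|h|_2A$; the Taylor remainder is at most $\tfrac14|f|_3\sum_{j,k,l}\lambda^{(j)}\mathbb{E}|D_jD_kD_l|=\tfrac14|f|_3B\le\tfrac1{12}|h|_3B$; the $\mathbb{E}[RW^T]$-term is at most $|f|_2\sum_{i,j,k}|(\Lambda^{-1})_{ij}|\,|\mathbb{E}[R_jW_k]|\le|f|_2\,d\,\|\Sigma\|^{1/2}\sum_j\lambda^{(j)}\sqrt{\mathrm{Var}(R_j)}=|f|_2\,d\,\|\Sigma\|^{1/2}C\le\tfrac12 d\|\Sigma\|^{1/2}|h|_2C$, the sum over $k$ contributing the factor $d$; and the drift remainder is at most $|f|_1\sum_j\lambda^{(j)}\sqrt{\mathrm{Var}(R_j)}=|f|_1C\le|h|_1C$. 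Adding the four bounds gives the claimed inequality. I expect the main work to lie in the middle step --- running the Taylor expansion inside the exchangeable-pair identity, and above all organising the several $R$-dependent contributions (and the index that generates the factor $d$) so that they collapse to exactly the quantity $C$ with its $\lambda^{(i)}$-weights --- together with checking the mild regularity that legitimises differentiating $f_h$ under the integral and applying Taylor's theorem with remainder, which is routine given the smoothness assumed on $h$.
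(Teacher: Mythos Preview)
The paper does not itself prove this theorem; it is quoted as a result of Reinert and R\"ollin \cite{Reinert Multivariate}, and the only related argument in the paper is the short proof of Theorem \ref{prop2.9}, which simply invokes inequality (2.11) of \cite{Reinert Multivariate} (precisely the decomposition you derive) and substitutes a different bound on $|f_h|_3$.

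Your sketch is correct and is essentially the original Reinert--R\"ollin proof: the same Ornstein--Uhlenbeck solution of the Stein equation with the bounds $|f_h|_k\le|h|_k/k$ for $k=1,2,3$; the same antisymmetrisation trick to replace $\partial_i f(W)$ by $\tfrac12(\partial_i f(W)-\partial_i f(W'))$; the same second-order Taylor expansion; and the same splitting of the Hessian term into a mean part (which, via $\tfrac12\Lambda^{-1}\mathbb{E}[DD^T]=\Sigma-\Lambda^{-1}\mathbb{E}[RW^T]$, cancels the $\Sigma$-term and produces the $\tfrac12 d\|\Sigma\|^{1/2}|h|_2 C$ contribution) and a centred fluctuation part (giving the $A$-term). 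The bookkeeping you describe, including the factor $d$ from the free index $k$ in the $\mathbb{E}[RW^T]$-term and the identity $\sum_i|(\Lambda^{-1})_{ij}|=\lambda^{(j)}$, matches the original argument.
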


The following theorem provides a bound under a weaker assumption on the test function $h.$     
  
\begin{theorem}  \label{prop2.9}
Suppose the same assumptions as in Theorem \ref{prop2.7} hold, except that we now only assume that $h$ is twice differentiable. Let $\sigma_*= \min_{1\leq i \leq d} [ \sum^d_{j=1} \tilde{\sigma}_{ij}^2]^{\frac{1}{2}}$, where $\tilde{\sigma}_{ij} = (\Sigma^{-\frac{1}{2}})_{ij}. $ Then
\begin{equation*}
|\mathbb{E}[h(W)]-\mathbb{E}[h(Y)]| \leq \frac{|h|_2}{4}A+\frac{\sqrt{2\pi}\sigma_* |h|_2}{16} B +\bigg(|h|_1 + \frac{1}{2}d \|\Sigma \|^{\frac{1}{2}} |h|_2 \bigg)C.
\end{equation*}
\end{theorem}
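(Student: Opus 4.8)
The plan is to re-examine the proof of Theorem \ref{prop2.7} and show that the only place where the third derivative $|h|_3$ enters is through a term that can be re-bounded in terms of $|h|_2$, by trading a derivative on $h$ for a derivative on the Gaussian density (i.e.\ integrating by parts against the density of $Y$). In the Stein's method argument behind Theorem \ref{prop2.7}, one writes $\mathbb{E}[h(W)]-\mathbb{E}[h(Y)] = \mathbb{E}[\mathcal{L}f(W)]$ for a solution $f$ of the multivariate Gaussian Stein equation with test function $h$, and the $B$-term arises from a third-order Taylor remainder of the form $\mathbb{E}^W[(W_i'-W_i)(W_j'-W_j)(W_k'-W_k)\,\partial_{ijk}f(\xi)]$. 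The standard route bounds $\|\partial_{ijk}f\|$ in terms of $|h|_3$; instead I would use the representation of $f$ as an integral against the Ornstein--Uhlenbeck semigroup, $f(w) = -\int_0^\infty \big(\mathbb{E}[h(\mathrm{e}^{-t}w+\sqrt{1-\mathrm{e}^{-2t}}\,\Sigma^{1/2}Z)]-\mathbb{E}[h(Y)]\big)\,dt$ (after the usual affine change of variables to standardise $\Sigma$), differentiate three times, and move one of the three derivatives off $h$ and onto the Gaussian kernel via Stein-type integration by parts. This produces a factor of the standardised Gaussian density evaluated and integrated, yielding a bound of the form $\|\partial_{ijk}f\|\leq c\,\sigma_*^{-1}\,|h|_2$ times the $t$-integral $\int_0^\infty \mathrm{e}^{-2t}(1-\mathrm{e}^{-2t})^{-1/2}\,dt$-type quantity.

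Concretely, the key steps are: (1) reduce to the standardised case by the substitution $\tilde W=\Sigma^{-1/2}W$, $\tilde Y=\Sigma^{-1/2}Y\sim\mathrm{MVN}_d(0,I_d)$, noting that $\Sigma^{-1/2}$ has rows of Euclidean length appearing in $\sigma_*$; (2) in the semigroup representation of the standard Gaussian Stein solution, differentiate under the integral to get $\partial_{ijk}f$, isolating the worst term $\frac{\mathrm{e}^{-3t}}{(1-\mathrm{e}^{-2t})^{3/2}}\mathbb{E}[\partial_{ijk}h(\cdots)]$ together with lower-order terms already controlled by $|h|_1,|h|_2$; (3) integrate by parts in the Gaussian expectation in the variable conjugate to one of the indices, converting $\partial_{ijk}h$ to $\partial_{jk}h$ at the cost of a linear factor from $\nabla\log\varphi$, so that after taking absolute values and using $|\partial_{jk}h|\le|h|_2$ one is left with $\int_0^\infty \frac{\mathrm{e}^{-3t}}{(1-\mathrm{e}^{-2t})^{3/2}}\cdot\sqrt{1-\mathrm{e}^{-2t}}\,\mathbb{E}[|(\text{Gaussian})|]\,dt$, which is a finite Gaussian-moment times a convergent $t$-integral; (4) track the constants carefully: the Gaussian first absolute moment contributes $\sqrt{2/\pi}$, the change of variables contributes a $\sigma_*$, and a clean evaluation of the remaining elementary integral produces the numerical constant $\sqrt{2\pi}/16$ in front of $B$ after combining with the $1/12$ from the Taylor remainder (or, equivalently, redoing the Taylor-remainder bookkeeping from scratch with the new derivative bound). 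The $A$-term and $C$-term are untouched because they already only involve $|h|_2$ and $|h|_1$, so they carry over verbatim from Theorem \ref{prop2.7}.

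The main obstacle I expect is the bookkeeping in step (3)--(4): one has to choose which of the three indices to integrate by parts on (natural to pick one that is not already ``used up''), correctly handle the linear drift term $x\mapsto \mathrm{e}^{-t}x/\sqrt{1-\mathrm{e}^{-2t}}$ produced by differentiating the Gaussian density, and check that the resulting integrand is integrable near $t=0$ (it is: the singular factor $\mathrm{e}^{-3t}(1-\mathrm{e}^{-2t})^{-3/2}$ from three derivatives is tamed to $(1-\mathrm{e}^{-2t})^{-1}$ after the by-parts step, but then one must be slightly more careful — in fact one should integrate by parts in such a way, or alternatively bound $\|\partial_{ijk}f\|$ via the known two-derivative estimate applied to $\partial_i f$ viewed as the Stein solution with test function $\partial_i h$, which is the cleanest route and immediately gives $\|\partial_{ijk}f\|\le 2\|\partial_i h\|_{\mathrm{Lip,\,appropriate\,sense}}$-type control, i.e.\ a bound in terms of $|h|_2$). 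Indeed, the slickest implementation is to observe that $\partial_i f$ solves the standard normal Stein equation with test function $\partial_i h$ (this is the standard commutation property), whence the bound $\|g^{(2)}\|\le\sqrt{2\pi}/4\,\|g^{(1)}\|$-analogue in the multivariate standardised setting gives $\|\partial_{ijk} f\|\le \tfrac{\sqrt{2\pi}}{4}\,\max_{j,k}\|\partial_{ijk}h\|$ up to the $\Sigma^{1/2}$ scaling, and then unwinding the standardisation inserts $\sigma_*$ and produces the stated constant. With this in hand, substituting the new bound for $\|\partial_{ijk}f\|$ into the same Taylor expansion used to prove Theorem \ref{prop2.7} and recombining the constants yields exactly the claimed inequality.
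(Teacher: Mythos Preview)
Your approach is essentially the same as the paper's: both start from the intermediate inequality (2.11) of Reinert--R\"ollin, which bounds $|\mathbb{E}[h(W)]-\mathbb{E}[h(Y)]|$ with a term $\frac{B}{4}|f_h|_3$, and both replace the Reinert--R\"ollin estimate $|f_h|_3\leq\frac{1}{3}|h|_3$ by the estimate $|f_h|_3\leq\frac{\sqrt{2\pi}\sigma_*}{4}|h|_2$. The only difference is that the paper simply cites this latter bound (inequality (2.2) of Gaunt, \emph{J.\ Theoret.\ Probab.}\ \textbf{29} (2016)), whereas you sketch its derivation via the semigroup representation and integration by parts --- note in passing that the correct intermediate coefficient on $B$ is $\tfrac{1}{4}$ (the $\tfrac{1}{12}$ in Theorem~\ref{prop2.7} already absorbs the factor $\tfrac{1}{3}$ from $|f_h|_3\le\tfrac{1}{3}|h|_3$), and the scaling gives $\sigma_*$ rather than $\sigma_*^{-1}$.
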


\begin{remark}
Theorem \ref{prop2.9} is easily derived with only a minor change to the argument used by \cite{Reinert Multivariate} to prove Theorem \ref{prop2.7}. However, the result could be quite useful because it provides a bound on the quantity $|\mathbb{E}[h(W)]-\mathbb{E}[h(Y)]|$ that is almost the same as that of  Theorem \ref{prop2.7} but holding for a larger class of test functions. The only cost is that the covariance matrix $\Sigma$ must be invertible and that an extra factor $\sigma_*$ appears in the bound, although it is fairly simple and can also be
bounded  by $\sigma_*\leq\sqrt{d}\|\Sigma^{-\frac{1}{2}}\|$.
\end{remark}

\begin{corollary}\label{prop2.6} Suppose that $\sigma_{jj} = (\Sigma)_{jj} >0 $ for all $1 \leq j \leq d$, and set $\sigma = \min_{1\leq j \leq d} \sigma_{jj}. $  Suppose all other notation and assumptions of Theorems \ref{prop2.7} and \ref{prop2.9} hold. Then,
if  $d_3(W,Y) \leq \frac{2+\sqrt{2 \log d}}{2\sigma}$ and $d_2(W,Y) \leq \frac{4+2\sqrt{2 \log d}}{\sigma}$, respectively,
\begin{align}\label{eq:11}
  d_{\mathrm{K}}(W,Y) &\leq  2\sqrt{2}\bigg(\frac{4+2\sqrt{2 \log d}}{\sigma}\bigg)^{\frac{3}{4}}\bigg(\frac{A}{4}+\frac{B}{12}+\bigg(1+\frac{1}{2}d \|\Sigma \|^{\frac{1}{2}}\bigg)C\bigg)^{\frac{1}{4}}, \\
\label{eq:12}d_{\mathrm{K}}(W,Y)& \leq  2\bigg(\frac{4+2\sqrt{2 \log d}}{\sigma}\bigg)^{\frac{2}{3}}\bigg(\frac{A}{4}+\frac{\sqrt{2\pi}\sigma_*B}{16}+\bigg(1+\frac{1}{2}d \|\Sigma \|^{\frac{1}{2}}\bigg)C\bigg)^{\frac{1}{3}}.
\end{align}
\end{corollary}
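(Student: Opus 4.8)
The plan is to combine Theorems \ref{prop2.7} and \ref{prop2.9} with Proposition \ref{prop2.5}, applied with $m=3$ and $m=2$ respectively. First I would deal with inequality (\ref{eq:11}). Restricting the test function $h$ in Theorem \ref{prop2.7} to the class $\mathcal{H}_3$, so that $|h|_1,|h|_2,|h|_3\leq1$, the right-hand side of the bound in Theorem \ref{prop2.7} becomes $\frac{A}{4}+\frac{B}{12}+\big(1+\frac12 d\|\Sigma\|^{1/2}\big)C$, and taking the supremum over $h\in\mathcal{H}_3$ yields $d_3(W,Y)\leq\frac{A}{4}+\frac{B}{12}+\big(1+\frac12 d\|\Sigma\|^{1/2}\big)C$. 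Then I would invoke inequality (\ref{multin}) of Proposition \ref{prop2.5} with $m=3$ and $Y\sim\mathrm{MVN}_d(0,\Sigma)$: since $M_3'=2^{3-2}(3-1)!=2\cdot 2=4$ and $N_3'=2^3M_3'=32$, and the validity condition is $d_3(W,Y)\leq\frac{2+\sqrt{2\log d}}{\sigma M_3'}=\frac{2+\sqrt{2\log d}}{4\sigma}$, which I would restate in the hypothesis of the corollary (noting it matches $d_3(W,Y)\leq\frac{2+\sqrt{2\log d}}{2\sigma}$ only up to a factor, so I should double-check the stated constant), we get
\[
d_{\mathrm{K}}(W,Y)\leq 2\bigg(\frac{\sqrt{2\log d}+2}{\sigma}\bigg)^{3/4}\big(32\,d_3(W,Y)\big)^{1/4}.
\]
Substituting the bound on $d_3(W,Y)$ and simplifying the numerical constant $2\cdot 32^{1/4}=2\cdot 2^{5/4}=2^{9/4}$, while rewriting $\big(\frac{\sqrt{2\log d}+2}{\sigma}\big)^{3/4}=\big(\frac{2\sqrt{2\log d}+4}{\sigma}\big)^{3/4}2^{-3/4}$, should produce the stated constant $2\sqrt2\big(\frac{4+2\sqrt{2\log d}}{\sigma}\big)^{3/4}$; indeed $2^{9/4}\cdot 2^{-3/4}=2^{3/2}=2\sqrt2$. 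This recovers (\ref{eq:11}).

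For inequality (\ref{eq:12}) I would proceed identically but using Theorem \ref{prop2.9} in place of Theorem \ref{prop2.7} and $m=2$ in Proposition \ref{prop2.5}. Restricting $h$ to $\mathcal{H}_2$ gives $d_2(W,Y)\leq\frac{A}{4}+\frac{\sqrt{2\pi}\sigma_*}{16}B+\big(1+\frac12 d\|\Sigma\|^{1/2}\big)C$. Since $m=2\geq2$, Proposition \ref{prop2.5} permits the weaker validity condition $d_2(W,Y)\leq\frac{4+2\sqrt{2\log d}}{\sigma M_2'}$ with $M_2'=2^{2-2}(2-1)!=1$, i.e.\ $d_2(W,Y)\leq\frac{4+2\sqrt{2\log d}}{\sigma}$, exactly as in the hypothesis; and $N_2'=2^2M_2'=4$. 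Then inequality (\ref{multin}) with $m=2$ gives $d_{\mathrm{K}}(W,Y)\leq 2\big(\frac{\sqrt{2\log d}+2}{\sigma}\big)^{2/3}\big(4\,d_2(W,Y)\big)^{1/3}$. Substituting the $d_2(W,Y)$ bound and absorbing $4^{1/3}\cdot 2^{-2/3}=1$ into the constant (since $\big(\frac{\sqrt{2\log d}+2}{\sigma}\big)^{2/3}=2^{-2/3}\big(\frac{4+2\sqrt{2\log d}}{\sigma}\big)^{2/3}$) gives $d_{\mathrm{K}}(W,Y)\leq 2\big(\frac{4+2\sqrt{2\log d}}{\sigma}\big)^{2/3}\big(\frac{A}{4}+\frac{\sqrt{2\pi}\sigma_*B}{16}+\big(1+\frac12 d\|\Sigma\|^{1/2}\big)C\big)^{1/3}$, which is (\ref{eq:12}).

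The argument is essentially bookkeeping: the only real content is the correct identification of $M_m'$ and $N_m'$ for $m=2,3$ (both fall in the range $1\leq m\leq 4$, so $M_m'=2^{m-2}(m-1)!$) and the careful tracking of the powers of $2$ so that the constants collapse to the compact form stated. The one point requiring care — and the step I would watch most closely — is the consistency of the validity condition: Proposition \ref{prop2.5} states the strong condition $d_m(X,Y)\leq\frac{2+\sqrt{2\log d}}{\sigma N_m'}$ and the weakened one (for $m\geq2$) $d_m(X,Y)\leq\frac{4+2\sqrt{2\log d}}{\sigma M_m'}$; for (\ref{eq:12}) the latter matches the stated hypothesis verbatim, whereas for (\ref{eq:11}) with $m=3$ neither the strong condition $d_3\leq\frac{2+\sqrt{2\log d}}{32\sigma}$ nor the weakened one (which requires $m\geq2$, giving $d_3\leq\frac{4+2\sqrt{2\log d}}{4\sigma}$) is literally $d_3\leq\frac{2+\sqrt{2\log d}}{2\sigma}$, so I would need to confirm which hypothesis the corollary actually intends and adjust accordingly; assuming the stated condition $d_3(W,Y)\leq\frac{2+\sqrt{2\log d}}{2\sigma}$ is the intended (weaker-than-strong) requirement coming from the $m\geq2$ relaxation, the implication goes through once one checks $\frac{2+\sqrt{2\log d}}{2\sigma}\leq\frac{4+2\sqrt{2\log d}}{4\sigma}$, which holds with equality, so there is no obstruction. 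Finally, $d_{\mathrm{K}}(W,Y)=d_{\mathrm{K}}(X,Y)$ with $X=W$, so Proposition \ref{prop2.5} applies directly with no further hypotheses needed beyond $\sigma_{jj}>0$, which is guaranteed by positive definiteness of $\Sigma$. \hfill$\Box$
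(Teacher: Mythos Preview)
Your proposal is correct and follows exactly the paper's approach: bound $d_3(W,Y)$ and $d_2(W,Y)$ by setting $|h|_1=|h|_2=|h|_3=1$ in Theorems \ref{prop2.7} and \ref{prop2.9}, then apply Proposition \ref{prop2.5} with $m=3$ (so $N_3'=32$) and $m=2$ (so $N_2'=4$). Your hesitation about the $m=3$ validity condition is unfounded---the weakened condition $d_3\leq\frac{4+2\sqrt{2\log d}}{\sigma M_3'}=\frac{4+2\sqrt{2\log d}}{4\sigma}=\frac{2+\sqrt{2\log d}}{2\sigma}$ is literally the stated hypothesis, as you eventually observe.
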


\begin{remark}If $A$, $B$ and C are order $n^{-\frac{1}{2}},$ then the bounds (\ref{eq:11}) and (\ref{eq:12}) are of order $n^{-\frac{1}{8}}$ and $n^{-\frac{1}{6}}$, respectively. The bounds (\ref{eq:11}) and (\ref{eq:12}) allow one to immediately obtain (sub-optimal order) Kolmogorov-distance bounds following an application of Theorem \ref{prop2.9} or the widely-used Theorem \ref{prop2.7} of \cite{Reinert Multivariate}. Corollary 3.1 of \cite{Reinert Multivariate} gives a bound, without explicit dependence on the dimension $d$, for non-smooth test functions in terms of the quantities $A'$, $B'$ and $C'$ that are more complicated than $A,$ $B$ and $C.$ If $A'$, $B'$ and $C'$ are of order $n^{-\frac{1}{2}},$ then the bound of \cite{Reinert Multivariate} is of order $n^{-\frac{1}{4}}.$ \rg{Therefore, whilst ours is not the first Kolmogorov distance bound involving exchangeable pairs couplings for multivariate normal approximation, it does have a benefit over the other available bound by being completely explicit and typically simpler to use in applications.}  We also remark that \cite{rr96} obtained order $n^{-\frac{1}{2}}\log n$ bounds \rg{to quantify multivariate normal approximation under local dependence assumptions}, although this rate was obtained by assuming that the random vectors are bounded. \rg{Optimal order $n^{-1/2}$ bounds for the multivariate central limit theorem have also been achieved by Stein's method \cite{bh10,gotze,raic}, and when specialised to this case our bound is of course not competitive.}
\end{remark}

 \noindent{\emph{Proof of Theorem \ref{prop2.9}.}} 
From inequality (2.11) of \cite{Reinert Multivariate}, we have that 
\begin{equation}\label{eq:8}
|\mathbb{E}[h(W)]-\mathbb{E}[h(Y)]|  \leq \frac{|h|_2}{4}A+\frac{B}{4} \max_{1\leq i,j,k\leq d}\bigg\| \frac{\partial^3 f_h(w)}{\partial w_i \partial w_j \partial w_k} \bigg\| +\bigg(|h|_1 + \frac{1}{2}d \|\Sigma \|^{\frac{1}{2}} |h|_2 \bigg)C,
\end{equation} 
where $f_h$ is the solution of the multivariate normal Stein equation (given in \cite{Reinert Multivariate}). By inequality (2.2) of \cite{gaunt gaunt rates} we have the bound $|f_h|_3\leq \frac{\sqrt{2\pi}\sigma_*}{4}|h|_2$,
and applying this inequality
into (\ref{eq:8}) yields the desired bound; \cite{Reinert Multivariate} used the inequality $|f_h|_3\leq\frac{1}{3}|h|_3$ instead.	
\hfill $\Box$

\vspace{3mm}

\noindent{\emph{Proof of Corollary \ref{prop2.6}.}} Set $|h|_1=|h|_2=|h|_3=1$ in the bounds of Theorems \ref{prop2.7} and \ref{prop2.9} to get bounds for $d_3(W,Y)$ and $d_2(W,Y)$, respectively.
Then apply Proposition \ref{prop2.5} with $m=3$ and $m=2$ (so that $N_{3}'=32$ and $N_{2}'=4$) to get the desired bounds.
\hfill $\Box$


\subsection{Beta approximation}\label{sec3.3}
The beta distribution provides an example of a distribution with possibly more than one singularity, depending on the parameter values. 

\begin{example} \label{beta example}
Let $X$ be an arbitrary random variable and let $Y$ follow the beta distribution with parameters $\alpha,\beta>0$, denoted by $\mathrm{Beta}(\alpha,\beta).$ The density function of $Y$ is $p(y) =\frac{ y^{\alpha-1}(1-y)^{\beta-1}}{B(\alpha,\beta)} $, $y \in (0,1)$, where $B(\alpha,\beta)$ is the beta function. 

\vspace{2mm}

\noindent{(i)}: First, suppose that $\alpha,\beta \geq 1.$ The density is bounded, and elementary calculus shows that $p(y)$ attains its maximum at $ y = \frac{\alpha-1}{\alpha+\beta-2} $ for $\alpha,\beta \geq 1$ (we exclude the case $\alpha=\beta=1$, in which case $p(y)=1$ for all $y\in(0,1)$), and therefore $p(y) \leq A_{\alpha,\beta} $, where  
\begin{equation*}
A_{\alpha,\beta}   = \frac{1}{B(\alpha,\beta)}\frac{(\alpha-1)^{\alpha-1}(\beta-1)^{\beta-1}}{(\alpha+\beta-2)^{\alpha+\beta-2}},
\end{equation*}
and we set $A_{1,1}=1$. Using case (i) of Proposition \ref{prop2.1} gives that, for $d_m(X,Y) \leq \frac{A_{\alpha,\beta}}{2N_{m}}$,
\begin{equation*}
d_{\mathrm{K}}(X,Y) \leq 2\big(A_{\alpha,\beta}^mM_md_m(X,Y)\big)^{\frac{1}{m+1}}.
\end{equation*}


\noindent{(ii):} Now suppose exactly one of $\alpha,\beta \in (0,1)$. Then there exists only one singularity point for this density function, and $p(y) \leq \frac{1}{B(\alpha,\beta)}|y|^{-(1-\alpha)}$ if  $\alpha<\beta$, or $p(y) \leq \frac{1}{B(\alpha,\beta)} |1-y|^{-(1-\beta)}$ if $\beta<\alpha$.
Using case (iii) of Proposition \ref{prop2.1} gives that, for $d_m(X,Y) \leq \frac{2^{1-\min(\alpha,\beta)}}{N_{m}B(\alpha,\beta) \cdot \min(\alpha,\beta) }$,
\begin{equation*}
d_{\mathrm{K}}(X,Y) \leq 2\bigg(\frac{2^{1-\min(\alpha,\beta)}}{B(\alpha,\beta)\cdot \min(\alpha,\beta)}\bigg)^{\frac{m}{m+\min(\alpha,\beta)}}\big(N_md_m(X,Y)\big)^{\frac{\min(\alpha,\beta)}{m+\min(\alpha,\beta)}}.
\end{equation*}


\noindent{(iii):} Finally, suppose both $\alpha,\beta \in (0,1).$ Then $p(y)\leq \frac{2^{1-\beta}}{B(\alpha,\beta)}|y|^{-(1-\alpha)}$ if $y \in (0,\frac{1}{2})$, and $p(y)\leq \frac{2^{1-\alpha}}{B(\alpha,\beta)}|1-y|^{-(1-\beta)}$ if $y \in (\frac{1}{2},1)$.
At the singularity points $y_1=0$, $y_2=1$, we have $p(y) \leq A|y-y_i|^{-a} $ for $|y-y_1|,|y-y_2| < \frac{1}{2}, $ where $A = \frac{2^{1-\min(\alpha,\beta)}}{B(\alpha,\beta)}$ and $a= 1-\min(\alpha,\beta).$ If we assume that $d_m(X,Y) \leq \frac{4^{1-\min (\alpha,\beta)}}{N_{m}B(\alpha,\beta) \cdot \min (\alpha,\beta)} $, then, by case (iii) of Proposition \ref{prop2.1}, 
\begin{equation*}
 d_{\mathrm{K}}(X,Y)  \leq  2\bigg(\frac{4^{1-\min(\alpha,\beta)}}{B(\alpha,\beta)\cdot \min(\alpha,\beta)}\bigg)^{\frac{m}{m+\min(\alpha,\beta)}}\big(N_md_m(X,Y)\big)^{\frac{\min(\alpha,\beta)}{m+\min(\alpha,\beta)}}.
\end{equation*}


\noindent{(iv):} We observe that the following bound applies for any parameter values $\alpha,\beta>0.$ Suppose $d_m(X,Y) \leq \frac{2^{-\min(\alpha,\beta,1)}}{N_mB(\alpha,\beta)\cdot \min(\alpha,\beta,1)}. $ Then 
\begin{equation}\label{eq:13}
d_{\mathrm{K}}(X,Y) \leq 2\bigg(\frac{4^{1-\min(\alpha,\beta,1)}}{B(\alpha,\beta) \cdot \min(\alpha,\beta,1)}\bigg)^{\frac{m}{m+\min(\alpha,\beta,1)}}\big(N_md_m(X,Y)\big)^{\frac{\min(\alpha,\beta,1)}{m+\min(\alpha,\beta,1)}}.
\end{equation}
\end{example}

\begin{example}Let us now see how the bounds of Example \ref{beta example} can be applied to a distributional approximation concerning the classical P\'olya-Eggenberger urn model. At time zero, we have $\alpha \geq 1 $ white balls and $\beta \geq 1$ black balls. A ball is chosen from the urn independently at every integer time and replaced along with $t \geq 1$ additional balls of the same colour. Let $S_n = S_n^{\alpha,\beta,n} $ denote the number of white balls drawn from the urn at time $n=0,1,\ldots$. Define $W_n=\frac{S_n}{n}$. It is a classical result that, as $n \rightarrow \infty$, $\mathcal{L}(W_n) \rightarrow_d \mathrm{Beta} (\frac{\alpha}{t},\frac{\beta}{t}).$ Let $Y \sim \mathrm{Beta} (\frac{\alpha}{t},\frac{\beta}{t})$. Theorem 1.1 of \cite{gr13} provides a (optimal order) Wasserstein distance bound for this distributional approximation: 
\begin{equation}\label{eq:14}
d_{\mathrm{W}}(W_n, Y ) \leq \bigg(\frac{t+\min(\alpha,\beta)}{2nt}+\frac{\alpha\beta}{nt(\alpha+\beta)}\bigg)(b_0+b_1)+\frac{3}{2n}:=\frac{C(\alpha,\beta,t)}{n},
\end{equation}
where $b_0=b_0(\frac{\alpha}{t},\frac{\beta}{t})$ and $b_1=b_1(\frac{\alpha}{t},\frac{\beta}{t})$ are some constants defined in \cite{gr13}. Combining (\ref{eq:13}) and (\ref{eq:14}) yields the Kolmogorov distance bound
\begin{equation*}
d_{\mathrm{K}}(X,Y) \leq 2\bigg(\frac{4^{1-\min(\frac{\alpha}{t},\frac{\beta}{t},1)}}{B(\frac{\alpha}{t},\frac{\beta}{t}) \cdot \min(\frac{\alpha}{t},\frac{\beta}{t},1)}\bigg)^{\frac{1}{1+\min(\alpha/t,\beta/t,1)}} \bigg(\frac{C(\alpha,\beta,t)}{n}\bigg)^{\frac{\min(\alpha/t,\beta/t,1)}{1+\min(\alpha/t,\beta/t,1)}},
\end{equation*}
which holds provided $\frac{C(\alpha,\beta,t)}{n} \leq \frac{2^{-\min(\alpha/t,\beta/t,1)}}{B(\alpha/t,\beta/t)\cdot \min(\alpha/t,\beta/t,1 )}. $  \rg{This is the first Kolmogorov distance bound for this particular distributional approximation concerning the classical P\'olya-Eggenberger urn model; the works of \cite{gr13} and \cite{d15}, which also considered this distributional approximation, worked in the Wassrstein and $d_{[2]}$ metrics, respectively.}

\rg{In \cite{d15}, a bound is given for the distance between the distributions of $W_n$ and $Y$ with respect to the $d_{[2]}$ metric that has the same rate of convergence with respect to $n$ as that of the Wasserstein distance bound (\ref{eq:14}). We are also able to use Proposition \ref{prop2.1} to derive a Kolmogorov distance bound in this case. However, as the bound is given with respect to the $d_{[2]}$ metric, the resulting Kolmogorov distance bound would have a slower order $n^{-\frac{\min(\alpha/t,\beta/t,1)}{2+\min(\alpha/t,\beta/t,1)}}$ rate of convergence.}
\end{example}

\subsection{Variance-gamma approximation}\label{sec3.5}

The variance-gamma (VG) distribution provides an example of a distribution whose density is either bounded or has a logarithmic or power law singularity depending on the parameter values. The VG distribution with parameters $r > 0$, $\theta \in \mathbb{R}$, $\sigma >0$, $\mu \in \mathbb{R}$, denoted by $\mathrm{VG}(r,\theta,\sigma,\mu)$, has probability density function
\begin{equation}\label{vgdefn}p(y) = \frac{1}{\sigma\sqrt{\pi} \Gamma(\frac{r}{2})} \mathrm{e}^{\frac{\theta}{\sigma^2} (y-\mu)} \bigg(\frac{|y-\mu|}{2\sqrt{\theta^2 +  \sigma^2}}\bigg)^{\frac{r-1}{2}} K_{\frac{r-1}{2}}\bigg(\frac{\sqrt{\theta^2 + \sigma^2}}{\sigma^2} |y-\mu| \bigg), 
\end{equation}
with support $\mathbb{R}$.  In the limit $\sigma\rightarrow 0$ the support becomes the interval $(\mu,\infty)$ if $\theta>0$, and is $(-\infty,\mu)$ if $\theta<0$. Here $K_\nu(y)$ is a modified Bessel function of the second kind. Basic properties of the VG distribution are given in \cite{gaunt vg,kkp01}. Upper bounds on $d_{\mathrm{K}}(X,Y)$ for an arbitrary random variable $X$ and $Y\sim\mathrm{VG}(r,\theta,\sigma,\mu)$ in terms of $d_{[1]}(X,Y)=d_{\mathrm{W}}(X,Y)$ were given by \cite{gaunt vg2} for the case $\theta=0$ and by \cite{gaunt vg3} for general $\theta\in\mathbb{R}$. We now apply Proposition \ref{prop2.1} to provide an upper bound on $d_{\mathrm{K}}(X,Y)$ in terms of $d_m(X,Y)$, $m\geq1$.

\begin{example}\label{exvg1}
Throughout this example, we work in the general case $r > 0$, $\theta \in \mathbb{R}$, $\sigma >0$, $\mu \in \mathbb{R}$.

\vspace{2mm}

\noindent{(i):} Suppose first that $r>1$. In this case, the density is bounded. Moreover, we have the bound $p(y)\leq A_{r,\theta,\sigma}$ for all $y\in\mathbb{R}$, where
\begin{align*}A_{r,\theta,\sigma}= \left\{
\begin{aligned}
&\frac{\Gamma(\frac{r-1}{2})}{2\sigma\sqrt{\pi} \Gamma(\frac{r}{2})}\bigg(\frac{\sigma^2}{\theta^2+\sigma^2}\bigg)^{\frac{r-1}{2}}, & & 1<r\leq2, \\
&\frac{\Gamma(\frac{r-1}{2})}{2\sigma\sqrt{\pi} \Gamma(\frac{r}{2})}\bigg(\frac{\sigma^2}{\theta^2+\sigma^2}\bigg)^{\frac{r-1}{2}} \mathrm{e}^{\frac{\theta^2}{\sigma^2} (r-2)}, & & r>2
\end{aligned}
\right.
\end{align*}
(see the proof of Proposition 3.6 of \cite{gaunt vg3}, and \cite[Proposition 2.1]{gaunt vg3}). Applying part (i) of Proposition \ref{prop2.1} now gives that, if $d_m(X,Y)\leq\frac{A_{r,\theta,\sigma}}{2N_m}$,
\begin{equation}\label{vgbd1}d_{\mathrm{K}}(X,Y)\leq2\big(A_{r,\theta,\sigma}^mM_md_m(X,Y)\big)^{\frac{1}{m+1}}.
\end{equation}


\noindent{(ii):} Now suppose $r=1$.  We begin by noting the inequality $\mathrm{e}^{\beta x}K_0(|x|)<-\pi\log|x|$, for $|x|\leq0.645$, $|\beta|\leq1$ (which follows easily from \cite[Lemma 5.1, (ii)]{gaunt vg3}). Then, for $|y-\mu|\leq\frac{0.645\sigma^2}{\sqrt{\theta^2+\sigma^2}}$,
\begin{align*}p(y)&=\frac{1}{\pi\sigma}\mathrm{e}^{\frac{\theta}{\sigma^2} (y-\mu)}K_0\bigg(\frac{\sqrt{\theta^2+\sigma^2}}{\sigma^2}|y-\mu|\bigg)<-\frac{1}{\sigma}\log\bigg(\frac{\sqrt{\theta^2+\sigma^2}}{\sigma^2}|y-\mu|\bigg).
\end{align*}
Applying part (ii) of Proposition \ref{prop2.1} with $A=\frac{1}{\sigma}$ and $c=\frac{\sqrt{\theta^2+\sigma^2}}{\sigma^2}$ yields that, for $d_m(X,Y)\leq\frac{1}{\sigma N_m}\min\big(1,(\frac{1.29\sigma^2}{\sqrt{\theta^2+\sigma^2}})^{m+1})$,
\begin{align}\label{vgbd2}d_{\mathrm{K}}(X,Y)\leq\bigg[2+\frac{1}{m+1}\log\bigg(\frac{2\sigma^{2m+1}}{M_m(\theta^2+\sigma^2)^{\frac{m+1}{2}}d_m(X,Y)}\bigg)\bigg]\bigg(\frac{N_m}{\sigma^m}d_{m}(X,Y)\bigg)^\frac{1}{m+1}.
\end{align}


\noindent{(iii):} Finally, we suppose $0<r<1$.
Part (i) of Lemma 5.1 of \cite{gaunt vg3} tells us that $e^xx^\mu K_\mu(x)\leq 2^{\mu-1}\Gamma(\mu)$, for $x>0$, $0<\mu\leq\frac{1}{2}$. But also, $K_\nu(x)=K_{-\nu}(x)$ for all $x\in\mathbb{R}$ (see \cite{olver}), and so $K_{\nu}(|x|)\leq 2^{-\nu-1}\Gamma(-\nu)e^{-|x|}|x|^\nu$, for $x\in\mathbb{R}$, $-\frac{1}{2}<\nu<0$. Applying this inequality with $\nu=\frac{r-1}{2}$, we deduce that, for $y\in\mathbb{R}$,
\begin{align*}p(y)\leq \frac{\Gamma(\frac{1-r}{2})|y-\mu|^{r-1}}{(2\sigma)^{r}\sqrt{\pi} \Gamma(\frac{r}{2}) }e^{\frac{\theta}{\sigma^2}(y-\mu)} e^{-\frac{\sqrt{\theta^2+\sigma^2}}{\sigma^2}|y-\mu|} \leq \frac{\Gamma(\frac{1-r}{2}) |y-\mu|^{-(1-r)}}{(2\sigma)^{r}\sqrt{\pi} \Gamma(\frac{r}{2}) }.
\end{align*}
Applying part (iii) of Proposition \ref{prop2.1} with $A=\frac{\Gamma(\frac{1-r}{2})}{(2\sigma)^{r}\sqrt{\pi} \Gamma(\frac{r}{2}) } $ and $a=1-r$ gives that, for $d_m(X,Y)\leq\frac{\Gamma(\frac{1-r}{2})}{N_m(4\sigma)^r\sqrt{\pi}\Gamma(\frac{r}{2}+1)}$,
\begin{equation}\label{vgbd3}
d_{\mathrm{K}}(X,Y) \leq 2\bigg(\frac{\Gamma(\frac{1-r}{2})}{(4\sigma)^r\sqrt{\pi}\Gamma(\frac{r}{2}+1)}\bigg)^{\frac{m}{m+r}}\big(N_md_m(X,Y)\big)^{\frac{r}{m+r}}.
\end{equation}
\end{example}

\begin{example} Suppose $r > 0$, $\theta \in \mathbb{R}$, $\sigma >0$, and let us write $\mathrm{VG}_c(r,\theta,\sigma)$ for $\mathrm{VG}(r,\theta,\sigma,-r\theta)$, so that $Y\sim\mathrm{VG}_c(r,\theta,\sigma)$ has zero mean (see \cite[equation (2.3)]{gaunt vg}). Let $(F_n)_{ n \geq 1 }$ be a sequence of elements from the second Wiener chaos. 
Define $\mathbf{M}(F_n) = \max \left\{ |\kappa_\ell(F_n) - \kappa_\ell(Y) |: \ell = 2,\ldots,6 \right\}. $
 Formulas for the first six cumulants of  $Y\sim\mathrm{VG}_c(r,\theta,\sigma)$ are given by \cite{eichelsbacher}. Then, a beautiful result of \cite{aet21} provides the following quantitative ``six moment" theorem for the VG approximation of elements from the second Wiener chaos with optimal convergence rate: there exist constants $C_1,C_2>0$, depending only on $r$, $\theta$ and $\sigma$, such that
  \begin{equation}\label{optimalvg}C_1\mathbf{M}(F_n)\leq d_{[2]}(F_n,Y)\leq C_2\mathbf{M}(F_n).
  \end{equation}
This is a VG analogue of the optimal four moment theorems for normal and centered gamma approximation of \cite{np15} and \cite{aek20}, respectively.
  
Applying the bounds (\ref{vgbd1})--(\ref{vgbd3}) of Example \ref{exvg1} (with $m=2$) to the upper bound of (\ref{optimalvg}) yields a quantitative six-moment theorem for VG approximation with respect to the Kolmogorov metric: there exist constants $C_3,C_4,C_5,C_6>0$, depending only on $r$, $\theta$ and $\sigma$, such that    
\begin{align}\label{optk}d_{\mathrm{K}}(F_n,Y)\leq \left\{\begin{aligned}
&C_3\big(\mathbf{M}(F_n)\big)^{\frac{1}{3}}, & & r>1, \\
&C_4\big(\mathbf{M}(F_n)\big)^{\frac{1}{3}}\log\bigg(\frac{C_5}{\mathbf{M}(F_n)}\bigg), & & r=1, \\
&C_6\big(\mathbf{M}(F_n)\big)^{\frac{r}{2+r}}, & & 0<r<1.
\end{aligned}
\right.
\end{align}
\rg{The estimates in (\ref{optk}) are the first Kolmogorov distance estimates in the literature for quantifying the six-moment theorem for VG approximation. Whilst the rate is likely to be suboptimal for each of the three cases considered ($r>1$, $r=1$ and $0<r<1$), the benefit of these estimates is that due to technical difficulties involving the solution of the VG Stein equation it seems very difficult to directly apply the Malliavin-Stein method to obtain Kolmogorov distance bounds for this distributional approximation; a discussion of these technical difficulties is given in Remark 5.5 of \cite{gaunt vg2}.} 
\end{example}

\begin{example} Consider the generalized Rosenblatt process $Z_{\gamma_1,\gamma_2}(t)$, introduced by \cite{mt12} as the double Wiener-It\^{o} integral 
\begin{equation*}Z_{\gamma_1,\gamma_2}(t)=\int_{\mathbb{R}^2}^{\prime}\bigg(\int_0^t(s-x_1)_+^{\gamma_1}(s-x_2)_+^{\gamma_2}\,ds\bigg)\,dB_{x_1}\,dB_{x_2},
\end{equation*}
where the prime $\prime$ indicates exclusion of the diagonals $x_1=x_2$ in the stochastic integral, $B_{x}$ is standard Brownian motion and $\gamma_i\in(-1,-\frac{1}{2})$, $i=1,2$, and $\gamma_1+\gamma_2>-\frac{3}{2}$. 
Since $Z_{\gamma_1,\gamma_2}(t)=_d t^{2+\gamma_1+\gamma_2}Z_{\gamma_1,\gamma_2}(1)$, we will work with the random variable $Z_{\gamma_1,\gamma_2}(1)$; results for general $t>0$ follow from rescaling.

Recently, \cite{aet21} have obtained optimal convergence results for some remarkable limit theorems of \cite{bt17} concerning the random variable $Z_{\gamma_1,\gamma_2}(1)$ at critical values of $\gamma_1$ and $\gamma_2$. 

\begin{itemize}

\item[(a)] Suppose $(\gamma_1,\gamma_2)\rightarrow(-\frac{1}{2},\gamma)$, where $-1<\gamma<-\frac{1}{2}$. Let $Y\sim\mathrm{VG}_c(1,0,1)$. Then, there exists constants $C_1,C_2>0$, such that, as $\gamma_1\rightarrow-\frac{1}{2}$,
\begin{align*}C_1\Big|-\gamma_1-\frac{1}{2}\Big|\leq d_{[2]}(Z_{\gamma_1,\gamma_2}(1),Y)\leq C_2\Big|-\gamma_1-\frac{1}{2}\Big|.
\end{align*}

\item[(b)] Let $\rho\in(0,1)$ and consider the random variable $Y_\rho\sim\mathrm{VG}_c(1,\frac{a_\rho-b_\rho}{\sqrt{2}},\sqrt{2a_pb_p})$, where
\begin{align*}a_\rho=\frac{(2\sqrt{\rho})^{-1}+(\rho+1)^{-1}}{\sqrt{(2\rho)^{-1}+2(\rho+1)^{-2}}}, \quad b_\rho=\frac{(2\sqrt{\rho})^{-1}-(\rho+1)^{-1}}{\sqrt{(2\rho)^{-1}+2(\rho+1)^{-2}}}.
\end{align*}
Suppose $\gamma_1\geq\gamma_2$ and that $\gamma_2=\frac{\gamma_1+1/2}{\rho}-\frac{1}{2}$. Then, there exists constants $C_3,C_4>0$, depending only on $\rho$, such that, as $\gamma_1\rightarrow-\frac{1}{2}$ (so that automatically $\gamma_2\rightarrow-\frac{1}{2}$),
\begin{align*}C_3\Big|-\gamma_1-\frac{1}{2}\Big|\leq d_{[2]}(Z_{\gamma_1,\gamma_2}(1),Y_\rho)\leq C_4\Big|-\gamma_1-\frac{1}{2}\Big|.
\end{align*}
A bound with slower convergence rate given with respect to the stronger 2-Wasserstein distance had previously been given by \cite{aaps17}.

\end{itemize}

In both cases (a) and (b) with corresponding target random variable $Y$, applying inequality (\ref{vgbd2}) of Example \ref{exvg1} yields that there exists a constant $C_5>0$ such that, as $\gamma_1\rightarrow-\frac{1}{2}$,
\[d_{\mathrm{K}}(Z_{\gamma_1,\gamma_2}(1),Y)\leq C_5\bigg|-\gamma_1-\frac{1}{2}\bigg|^{\frac{1}{3}}\log\bigg|\frac{1}{-\gamma_1-\frac{1}{2}}\bigg|,\]
improving on the rate of convergence of a recent Kolmogorov distance bound of \cite{gaunt vg3}.
\end{example}

\section{Smooth approximations of indicator functions}\label{sec4}

In this section, we study smooth approximations to indicator functions. We focus on univariate indicator functions in Section \ref{sec4.1} and extend to multivariate indicator functions in Section \ref{sec4.2}. The main results of this section (bounds on the derivatives of the approximating functions, which are given in Corollary \ref{cormax} and Lemma \ref{prop2.13}) are used in Section \ref{sec5} to prove the results of Section \ref{sec2}.

\subsection{Univariate indicator functions}\label{sec4.1}
In this section, we seek piecewise-defined polynomials $h_{m,z,\alpha}(x)$ to approximate the indicator function $\mathbf{1}_{ x \leq z }$  for any $ m \geq 1 $, $z \in \mathbb{R}$ and $\alpha >0.$ Our target function $h_{m,z,\alpha}:\mathbb{R}\rightarrow[0,1]$ will be such that $h_{m,z,\alpha}(x) \geq \mathbf{1}_{x \leq z}$, with $h_{m,z,\alpha}(x)=1$ for $x \leq z$ and $ h_{m,z,\alpha}(x) = 0$ for $x \geq z+\alpha$, and its $(m-1)-$th derivative $h_{m,z,\alpha}^{(m-1)}(x)$ will be Lipschitz. We denote the class of such functions by $\mathcal{F}_{m,z,\alpha}$. The aim of this section is to construct such a function which also provides an accurate bound for the quantity $\max_{0 \leq i \leq m}\|h_{m,z,\alpha}^{(i)} \|$, a term that arises in our proofs of the results of Section \ref{sec2}.
For ease of notation, we will first work in the case $z=-1$, $\alpha=2$ (we set $h_m(x) :=h_{m,-1,2}(x) $); we then recover the general case $z \in \mathbb{R}$, $\alpha>0$ via the relation $h_{m,z,\alpha} (x) = h_m(\frac{2}{\alpha} (x-(z+\frac{\alpha}{2})) ). $ 

The function $h_m : \mathbb{R} \rightarrow [0,1] $ will satisfy:
\begin{itemize}
\item[(C1):] $h_m(x) = 1 $ for $x \in (-\infty,-1]$, $h_m(x)\in[0,1]$ for $x\in(-1,1)$, and $ h_m(x) = 0 $ for $x \in [1,\infty)$;
\item[(C2):] $h_m\in C^{m-1}(\mathbb{R})$ and $h_m^{(m-1)} $ is Lipschitz.
\end{itemize}
For a given $m\geq1$, the class of all functions satisfying conditions (C1) and (C2) will be denoted by $\mathcal{F}_m$. 

\begin{lemma}\label{simplelem}Let $m\geq1$. For $h_m\in\mathcal{F}_m$, we have $1=\|h_m\|\leq 2\|h_m'\|$, and $\| h_m^{(i)} \| \leq \| h_m^{(i+1)} \| $ for all $ 1 \leq i  \leq m-1. $ 
\end{lemma}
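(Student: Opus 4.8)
The plan is to prove the three claims in turn, using only the defining properties (C1) and (C2) of the class $\mathcal{F}_m$ together with elementary calculus on a finite interval.

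First I would establish $\|h_m\|=1$. By (C1), $h_m(x)=1$ on $(-\infty,-1]$ and $h_m(x)\in[0,1]$ everywhere, so $\sup_x |h_m(x)| = 1$; this is immediate. Next, for the inequality $1\leq 2\|h_m'\|$: since $h_m$ is continuous with $h_m(-1)=1$ and $h_m(1)=0$ (from (C1)), and $h_m$ is absolutely continuous on $[-1,1]$ (as $h_m^{(m-1)}$ is Lipschitz, in particular $h_m'$ exists a.e.\ and $h_m$ is the integral of $h_m'$ when $m\geq 2$; when $m=1$, $h_1$ itself is Lipschitz hence absolutely continuous), we have
\begin{equation*}
1 = |h_m(-1)-h_m(1)| = \left| \int_{-1}^{1} h_m'(x)\,dx \right| \leq \int_{-1}^{1} |h_m'(x)|\,dx \leq 2\|h_m'\|.
\end{equation*}
This gives $1=\|h_m\|\leq 2\|h_m'\|$.

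For the chain $\|h_m^{(i)}\| \leq \|h_m^{(i+1)}\|$ for $1\leq i\leq m-1$, the key observation is that for each such $i$, the derivative $h_m^{(i)}$ vanishes outside $(-1,1)$ — indeed $h_m$ is constant ($\equiv 1$) on $(-\infty,-1]$ and constant ($\equiv 0$) on $[1,\infty)$, so all its derivatives of order $\geq 1$ are zero there, and by continuity of $h_m^{(i)}$ (valid since $i\leq m-1$ and $h_m\in C^{m-1}$) we get $h_m^{(i)}(-1)=h_m^{(i)}(1)=0$. Hence for any $x\in[-1,1]$, writing $h_m^{(i)}(x) = \int_{-1}^{x} h_m^{(i+1)}(t)\,dt$ (using that $h_m^{(i)}$ is absolutely continuous on $[-1,1]$, which holds because either $i\leq m-2$ so $h_m^{(i+1)}$ is continuous, or $i=m-1$ so $h_m^{(m-1)}$ is Lipschitz), we obtain $|h_m^{(i)}(x)| \leq \int_{-1}^{1}|h_m^{(i+1)}(t)|\,dt$. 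That last step as stated would only give a factor of $2$; to get the clean bound without the factor $2$, I would instead use that since $h_m^{(i)}(-1)=h_m^{(i)}(1)=0$ and $h_m^{(i)}$ is continuous on the compact interval $[-1,1]$, it attains its sup-norm at some interior point $x_0$ where (if $h_m^{(i)}$ is differentiable there) $h_m^{(i+1)}(x_0)=0$ is \emph{not} what we want either — rather, split the integral at $x_0$: $h_m^{(i)}(x_0) = \int_{-1}^{x_0} h_m^{(i+1)} = -\int_{x_0}^{1} h_m^{(i+1)}$, so $2|h_m^{(i)}(x_0)| \leq \int_{-1}^{1}|h_m^{(i+1)}| $ is again only the factor-$2$ bound. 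The correct route is: since $h_m^{(i)}$ is bounded and vanishes at $\pm 1$, its positive part and negative part are each controlled; more cleanly, on the subinterval from $-1$ to the point where $|h_m^{(i)}|$ is maximised the integral of $h_m^{(i+1)}$ equals $h_m^{(i)}(x_0)$, and on that same subinterval $|h_m^{(i+1)}|$ integrates to at least $|h_m^{(i)}(x_0)|$, but this still does not beat the factor $2$. I expect the intended argument is simply the mean-value / fundamental-theorem bound giving $\|h_m^{(i)}\|\leq \|h_m^{(i+1)}\|\cdot \tfrac{(\text{something})}{}$; on reflection, the cleanest valid statement is: pick $x_0\in[-1,1]$ with $|h_m^{(i)}(x_0)|=\|h_m^{(i)}\|$; since $h_m^{(i)}(-1)=0$, $\|h_m^{(i)}\| = |h_m^{(i)}(x_0) - h_m^{(i)}(-1)| = |\int_{-1}^{x_0} h_m^{(i+1)}| \leq (x_0+1)\|h_m^{(i+1)}\|$, and similarly using $h_m^{(i)}(1)=0$ we get $\|h_m^{(i)}\|\leq (1-x_0)\|h_m^{(i+1)}\|$; adding these, $2\|h_m^{(i)}\|\leq 2\|h_m^{(i+1)}\|$, i.e.\ $\|h_m^{(i)}\|\leq\|h_m^{(i+1)}\|$.

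The main obstacle is essentially bookkeeping: getting the \emph{sharp} constant $1$ (rather than $2$) in $\|h_m^{(i)}\|\leq\|h_m^{(i+1)}\|$, which is handled by the "add the two one-sided estimates" trick above exploiting that $h_m^{(i)}$ vanishes at \emph{both} endpoints $\pm 1$ (whereas $h_m$ itself does not vanish at $-1$, which is exactly why the first inequality only gives the factor-$2$ bound $1\leq 2\|h_m'\|$). A secondary technical point is justifying the fundamental theorem of calculus / absolute continuity at each differentiation level, which follows routinely from (C2): $h_m^{(m-1)}$ Lipschitz implies $h_m^{(j)}\in C^1$ for $j\leq m-2$ and $h_m^{(m-1)}$ absolutely continuous, so all integrations above are legitimate.
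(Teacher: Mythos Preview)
Your proof is correct and uses essentially the same idea as the paper: both arguments exploit that $h_m^{(i)}(\pm 1)=0$ for $1\leq i\leq m-1$ together with the fundamental theorem of calculus / mean value theorem. The paper's execution is marginally cleaner: rather than taking the maximiser $x_0$, using both endpoints, and adding the two one-sided estimates, it simply observes that for $x\in[-1,0]$ one has $|h_m^{(i)}(x)-h_m^{(i)}(-1)|\leq (x+1)\|h_m^{(i+1)}\|\leq \|h_m^{(i+1)}\|$ (since $x+1\leq 1$), and argues symmetrically for $x\in[0,1]$ using the right endpoint.
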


\begin{proof}
It is clear that $2\|h_m'\|\geq1=\|h_m\|$. Also, conditions (C1) and (C2) imply that $h_m^{(i)}(-1)=0$ for $1\leq i\leq m-1$, and so, by the mean value theorem, for any $-1 \leq x \leq 0,$
\begin{equation*}
  |h_m^{(i)}(x)|  = |h_m^{(i)}(x) - h_m^{(i)}(-1) |
   \leq (x+1) \|h_m^{(i+1)}\|
   \leq \|h_m^{(i+1)}\|.
\end{equation*}
We argue similarly for $0<x\leq1$ to deduce that $\|h_m^{(i)} \| \leq \|h_m^{(i+1)} \|$ for $1\leq i\leq m-1$.
\end{proof}

The proof of the following lemma is deferred to the end of this section.

\begin{lemma}\label{conj1}For $m\geq1$, there exists $h_m\in\mathcal{F}_m$ such that 
\begin{equation}\label{hmfor}\|h_m^{(m)}\|=2^{m-2}(m-1)!.
\end{equation}
\end{lemma}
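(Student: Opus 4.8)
The plan is to construct $h_m$ by iterated integration of a suitably rescaled copy of $h_{m-2}$, so that the construction mirrors the recursive relation already hinted at in the commented-out material, but with the scaling chosen to make the constant $\|h_m^{(m)}\|$ come out \emph{exactly} equal to $2^{m-2}(m-1)!=M_m$. First I would verify the base cases directly: for $m=1$ the only admissible $h_1\in\mathcal{F}_1$ that is linear on $(-1,1)$ is $h_1(x)=\tfrac12-\tfrac12 x$, giving $\|h_1^{(1)}\|=\tfrac12=2^{-1}\cdot 0!$; for $m=2$ the piecewise-quadratic $h_2$ with $h_2(x)=1-\tfrac12(x+1)^2$ on $(-1,0]$ and $h_2(x)=\tfrac12(x-1)^2$ on $(0,1)$ lies in $\mathcal{F}_2$ and has $\|h_2^{(2)}\|=1=2^0\cdot 1!$. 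Both agree with \eqref{hmfor}.

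Next I would set up the inductive step. Given $h_{m}\in\mathcal{F}_m$ with $\|h_m^{(m)}\|=2^{m-2}(m-1)!$, define $h_{m+2}$ on $(-1,1)$ by prescribing its second derivative as a rescaled, sign-alternating, translated copy of $h_m$ supported on the four subintervals $(-1,-\tfrac12]$, $(-\tfrac12,0]$, $(0,\tfrac12]$, $(\tfrac12,1)$ — concretely something of the shape $h_{m+2}^{(2)}(x)=k\,h_m(4x+3)-k$ on $(-1,-\tfrac12]$, $=-k\,h_m(4x+1)$ on $(-\tfrac12,0]$, and the odd reflection on $(0,1)$ — with $h_{m+2}^{(2)}\equiv 0$ off $(-1,1)$, for a constant $k>0$ to be pinned down. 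One checks: $h_{m+2}^{(2)}$ is continuous because at the breakpoints $h_m$ and its derivatives up to order $m-1$ vanish (condition (C1)/(C2) for $h_m$), which also gives $h_{m+2}\in C^{m+1}$ with $h_{m+2}^{(m+1)}$ Lipschitz since $h_m^{(m-1)}$ is; the oddness of $h_{m+2}^{(2)}$ plus monotonicity ($h_{m+2}^{(2)}\le 0$ on $(-1,0)$, $\ge 0$ on $(0,1)$) forces $h_{m+2}'\le 0$ throughout, so $h_{m+2}$ is non-increasing; and the normalisation $h_{m+2}(1)-h_{m+2}(-1)=\int_{-1}^1\int_{-1}^x h_{m+2}^{(2)}(u)\,du\,dx$ is linear in $k$, so there is a unique $k>0$ making $h_{m+2}(-1)=1$, $h_{m+2}(1)=0$, i.e. $h_{m+2}\in\mathcal{F}_{m+2}$.

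The crux is to compute $k$ exactly rather than merely bound it. Using $\int_{-1}^1 h_m(u)\,du = 1$ (which follows from $h_m(-1)=1$, $h_m(1)=0$, the monotonicity, and the symmetry $h_m(x)+h_m(-x)=1$, so the integral is $1$ by antisymmetry of $h_m-\tfrac12$) together with $h_{m+2}(0)-h_{m+2}(-1)=-\tfrac12$, I would evaluate $h_{m+2}'(-\tfrac12)=\int_{-1}^{-1/2}h_{m+2}^{(2)}=\tfrac{k}{4}\int_{-1}^1 h_m - \tfrac{k}{2}=-\tfrac{k}{4}$ and then integrate once more over $(-\tfrac12,0)$, or more cleanly compute $-\tfrac12=h_{m+2}(0)-h_{m+2}(-1)=\int_{-1}^0 h_{m+2}'(x)\,dx$ in closed form in terms of $k$ and $\int_{-1}^1 x h_m(x)\,dx$ (itself evaluated via the symmetry), to solve for $k$. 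I expect the bookkeeping to yield precisely $k=4$. Granting $k=4$, the derivative count is immediate: $\|h_{m+2}^{(2)}\|=k=4$ at the ends of the recursion in the even case and $\|h_{m+2}^{(n+2)}\|=4^{n+1}\|h_m^{(n)}\|$ for the top derivative $n=m$, so $\|h_{m+2}^{(m+2)}\|=4^{m+1}\cdot 2^{m-2}(m-1)!$; checking $4^{m+1}\cdot 2^{m-2}(m-1)! \overset{?}{=} 2^{m}\cdot(m+1)!$ — i.e. $2^{2m+2}\cdot 2^{m-2}(m-1)! = 2^{3m}(m-1)!$ versus $2^m(m+1)m(m-1)!$ — shows the naive scaling overshoots, so the real obstacle, and the step I would spend most care on, is choosing the \emph{correct} spatial and amplitude rescaling (the factor $4$ in $h_m(4x+\cdot)$ and the number of subintervals) so that the recursion multiplies $\|h^{(m)}\|$ by exactly $2(m+1)$ at each stage; I would adjust to a two-interval construction $h_{m+1}$ from $h_{m-1}$ (or reparametrise the rescaling constant) so that $\|h_{m+1}^{(m+1)}\| = 2(m)\cdot\tfrac{?}{}$ matches, and then close the induction and read off \eqref{hmfor}. $\square$
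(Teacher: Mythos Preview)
Your proposal has a genuine gap at the key step. The recursive construction you describe --- building $h_{m+2}$ from $h_m$ by placing rescaled, sign-alternating copies of $h_m$ on four quarter-intervals --- does produce a function in $\mathcal{F}_{m+2}$, but the normalising constant $k$ is \emph{not} equal to $4$. The estimate you sketch, $-\tfrac12=\int_{-1}^0 h_{m+2}'<\int_{-1/2}^0 h_{m+2}'<\tfrac12 h_{m+2}'(-\tfrac12)=-\tfrac{k}{8}$, uses strict inequalities (because $h_{m+2}'$ is strictly negative on the first subinterval and strictly decreasing on the second), so it only gives $k<4$. You cannot pin $k$ down to $4$, and indeed it is strictly smaller. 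Consequently the recursion gives $\|h_{m+2}^{(m+2)}\|=k\cdot 4^{m}\|h_m^{(m)}\|$ with $k$ depending on $m$ in a complicated way, and the resulting growth is not $2^{m-2}(m-1)!$; at best this approach yields a crude bound of order $2^{O(m^2)}$, far from the target. Your final paragraph acknowledges the mismatch but the proposed fix (``adjust to a two-interval construction'' or ``reparametrise the rescaling constant'') is not an argument: there is no obvious self-similar recursion whose multiplicative factor is exactly $2(m+1)m$ at each step, and you have not exhibited one.

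The paper's proof abandons recursion entirely and instead writes down $h_m$ explicitly as a perfect spline with knots at the Chebyshev nodes $x_k=\cos(\pi k/m)$, $0\le k\le \lfloor m/2\rfloor$: on each subinterval $h_m$ is a degree-$m$ polynomial with $|h_m^{(m)}|$ identically $2^{m-2}(m-1)!$, the sign alternating across knots. The verification that this lands in $\mathcal{F}_m$ (continuity of $h_m^{(i)}$ at $x=0$ for even $i$) reduces to the trigonometric identities $\sum_{j=1}^{n-1}(-1)^{j+1}\cos^{2\ell}(\pi j/(2n))=\tfrac12$ for $1\le\ell\le n-1$ and a companion formula at $\ell=n$. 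The Chebyshev placement of the knots is the essential idea you are missing; it is what makes the $m$-th derivative globally constant in modulus at the stated value.
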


\begin{corollary}\label{cormax}For $m\geq1$, $z\in\mathbb{R}$, $\alpha>0$, there exists $h_{m,z,\alpha}\in\mathcal{F}_{m,z,\alpha}$ such that
\begin{align}\label{nfbd1}\max_{0\leq i\leq m}\|h_{m,z,\alpha}^{(i)}\|&\leq2^{m-2}(m-1)!\bigg[1+\frac{2^m}{\alpha^m}\bigg], \quad \alpha>0,\\
\label{nfbd2}\max_{0\leq i\leq m}\|h_{m,z,\alpha}^{(i)}\|&\leq\frac{2^{2m-2}(m-1)!}{\alpha^m}, \quad 0<\alpha\leq1.
\end{align}
\end{corollary}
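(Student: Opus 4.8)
The plan is to obtain Corollary \ref{cormax} as a direct consequence of Lemma \ref{conj1}, transported to general $z\in\mathbb{R}$ and $\alpha>0$ by the affine rescaling $h_{m,z,\alpha}(x)=h_m\!\left(\frac{2}{\alpha}\bigl(x-(z+\frac{\alpha}{2})\bigr)\right)$ recorded at the start of Section \ref{sec4.1}. Fix $m\geq1$ and let $h_m\in\mathcal{F}_m$ be the function supplied by Lemma \ref{conj1}, so that $\|h_m^{(m)}\|=2^{m-2}(m-1)!$. By Lemma \ref{simplelem} one has the chain $\|h_m'\|\leq\|h_m''\|\leq\cdots\leq\|h_m^{(m)}\|$ together with $\|h_m\|=1$, and hence $\|h_m^{(i)}\|\leq 2^{m-2}(m-1)!$ for every $1\leq i\leq m$.

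First I would check that $h_{m,z,\alpha}$ defined by the displayed relation lies in $\mathcal{F}_{m,z,\alpha}$: the inner argument is $\leq-1$ exactly when $x\leq z$ and $\geq 1$ exactly when $x\geq z+\alpha$, so condition (C1) for $h_m$ gives $h_{m,z,\alpha}\equiv1$ on $(-\infty,z]$, $h_{m,z,\alpha}\equiv0$ on $[z+\alpha,\infty)$ and $h_{m,z,\alpha}\in[0,1]$ in between, while condition (C2) for $h_m$ and the chain rule show $h_{m,z,\alpha}^{(m-1)}$ is Lipschitz. The chain rule also gives $h_{m,z,\alpha}^{(i)}(x)=(2/\alpha)^i\,h_m^{(i)}\!\left(\frac{2}{\alpha}(x-(z+\frac{\alpha}{2}))\right)$, so $\|h_{m,z,\alpha}^{(i)}\|=(2/\alpha)^i\|h_m^{(i)}\|$ for $0\leq i\leq m$; in particular $\|h_{m,z,\alpha}^{(0)}\|=1$ and $\|h_{m,z,\alpha}^{(i)}\|\leq(2/\alpha)^i\,2^{m-2}(m-1)!$ for $1\leq i\leq m$.

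It then remains to maximise over $i$, which I would do by a short case split on the size of $\alpha$. When $0<\alpha\leq2$ the factor $(2/\alpha)^i$ is non-decreasing in $i$, so among $1\leq i\leq m$ the supremum norm is largest at $i=m$, giving $\|h_{m,z,\alpha}^{(i)}\|\leq(2/\alpha)^m 2^{m-2}(m-1)!=2^{2m-2}(m-1)!/\alpha^m$; when $\alpha>2$ the factor is non-increasing and at most $1$, so $\|h_{m,z,\alpha}^{(i)}\|\leq 2^{m-2}(m-1)!$ for $1\leq i\leq m$. Feeding in the $i=0$ contribution $\|h_{m,z,\alpha}\|=1$ and using $\max(a,b)\leq a+b$ (together with $1\leq 2^{m-2}(m-1)!$ for $m\geq2$, the case $m=1$ being handled directly from $\|h_{1,z,\alpha}\|=1$ and $\|h_{1,z,\alpha}'\|=1/\alpha$) yields inequality (\ref{nfbd1}). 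Finally, for $0<\alpha\leq1$ one has $2/\alpha\geq2$, so $(2/\alpha)^i$ is non-decreasing over all $0\leq i\leq m$ and the maximum is attained at $i=m$ — the $i=0$ term $1$ being dominated since $\alpha\leq1$ and $2^{2m-2}(m-1)!\geq1$ — which is exactly inequality (\ref{nfbd2}). I do not expect any genuine obstacle in this argument: the substantive work lies entirely in Lemma \ref{conj1}, and here the only care required is the elementary bookkeeping of the scaling factors $(2/\alpha)^i$ and the case analysis on $\alpha$.
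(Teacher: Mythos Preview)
Your approach is essentially the same as the paper's: both use the affine rescaling $h_{m,z,\alpha}(x)=h_m(\frac{2}{\alpha}(x-(z+\frac{\alpha}{2})))$, invoke Lemma~\ref{simplelem} to reduce to $\|h_m^{(m)}\|$, and then plug in the value $2^{m-2}(m-1)!$ from Lemma~\ref{conj1}. The paper is slightly more compact, writing $\max_{0\le i\le m}(2/\alpha)^i\le 1+(2/\alpha)^m$ directly rather than splitting on $\alpha\le2$ versus $\alpha>2$, but the content is identical.

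There is one small gap in your treatment of the case $m=1$ in (\ref{nfbd1}). You write that this case is ``handled directly from $\|h_{1,z,\alpha}\|=1$ and $\|h_{1,z,\alpha}'\|=1/\alpha$'', but for $\alpha>2$ one has $\max(1,1/\alpha)=1$ while the right-hand side of (\ref{nfbd1}) equals $\tfrac12+1/\alpha<1$. So the inequality (\ref{nfbd1}) as stated actually fails for $m=1$, $\alpha>2$, and no choice of $h\in\mathcal{F}_{1,z,\alpha}$ can rescue it since any such $h$ satisfies $\|h\|\ge1$. This is an issue with the corollary's statement itself (and the paper's own proof glosses over it in the same way, since Lemma~\ref{simplelem} only gives $\|h_m\|\le 2\|h_m'\|$, not $\|h_m\|\le\|h_m^{(m)}\|$, and for $m=1$ one has $\|h_1^{(1)}\|=\tfrac12<1=\|h_1\|$). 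It does not affect (\ref{nfbd2}) or any of the downstream applications in Section~\ref{sec5}, which only ever use (\ref{nfbd1}) in the form of Proposition~\ref{prop2.2} after choosing $\alpha$ small, but you should not claim the $m=1$ case of (\ref{nfbd1}) is handled for all $\alpha>0$.
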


\begin{proof}We have $\|h^{(i)}_{m,z,\alpha} \| = (\frac{2}{\alpha})^i \|h_m^{(i)} \| $ for $0 \leq i \leq m$, and by Lemma \ref{simplelem} we obtain that
\begin{equation}\label{maxfor0}\max_{0 \leq i \leq m} \| h_{m,z,\alpha}^{(i)} \|\leq\|h_m^{(m)}\|\max_{0 \leq i \leq m}\bigg(\frac{2}{\alpha}\bigg)^i\leq \|h_m^{(m)}\|\bigg[1+\bigg(\frac{2}{\alpha}\bigg)^m\bigg].
\end{equation}
If we further assume that $\alpha \leq 1$, then we can obtain the bound
\begin{equation}\label{maxfor}
\max_{0 \leq i \leq m} \| h_{m,z,\alpha}^{(i)} \| \leq \|h_m^{(m)} \| \bigg(\frac{2}{\alpha}\bigg)^m.
\end{equation}
 Combining (\ref{maxfor0}) and (\ref{maxfor}) with (\ref{hmfor}) yields inequalities (\ref{nfbd1}) and (\ref{nfbd2}), respectively. 
\end{proof}

\begin{remark}(1): The restriction $\alpha\leq1$ in Corollary \ref{cormax} is made to yield a compact bound for $\max_{0\leq i\leq m}\|h_{m,z,\alpha}^{(i)}\|$, which will in turn yield compact bounds in our general results of Section \ref{sec2}.

\vspace{2mm}

\noindent{(2):} Our construction of $h_m\in\mathcal{F}_m$ in the proof of Lemma \ref{conj1} satisfies $|h_m^{(m)}(x)|=2^{m-2}(m-1)!$ for all $x\in(-1,1)$, that is it is constant on the whole interval; in the language of approximation theory such functions are referred to as
 perfect splines \cite{powell}. The function $h_m$ was carefully constructed to possess this feature in order to minimise the quantity $\|h_m^{(m)}\|$.
\end{remark}

\noindent{\emph{Proof of Lemma \ref{conj1}.}}
We first prove the result in detail for the case $m\geq2$ is even; the odd $m\geq1$ case is similar and we only highlight the points at which the argument differs. Let $m=2n\geq2$. Consider the function $h_m:\mathbb{R}\rightarrow\mathbb{R}$ defined by $h_m(x) = 1 $ for $x \in (-\infty,-1]$, $ h_m(x) = 0 $ for $x \in [1,\infty)$, and in the interval $x\in(-1,1)$ is given by
\begin{align*}
 h_m(x)=
 \left\{ \begin{aligned} 
 1- \frac{2^{m-2}}{m}\bigg[(1+x)^m-2\sum_{j=1}^k(-1)^{j+1}(x_j+x)^m\bigg], & & \text{$x \in -I_k$, $0\leq k\leq n-1$,} &
 \\ \frac{2^{m-2}}{m}\bigg[(1-x)^m-2\sum_{j=1}^k(-1)^{j+1}(x_j-x)^m\bigg], & & \text{$x \in I_k$, $0\leq k\leq n-1$,} &
 \end{aligned} \right.
 \end{align*} 
where $I_{k}=(x_{k+1},x_k]$ and $-I_k=(-x_k,-x_{k+1}]$ for $0\leq k\leq n-1$.
Here $x_k=\cos(\frac{\pi k}{m})$, and in order to give a more economical formula we have used the convention that $\sum_{j=1}^0a_j=0$. In interpreting the above formula, observe that $x_0=1$ and $x_n=0$.

We now prove that $h_m\in\mathcal{F}_m$; it will then be clear that $\|h_m^{(m)}\|=2^{m-2}(m-1)!$. For ease of exposition, in what follows we assume $m=2n\geq4$; the case $m=2n=2$ follows the same lines but is much simpler. We readily see that $h_m'(x)<0$ for $x\in(-1,1)$, meaning that $h(x)\in[0,1]$ for all $x\in\mathbb{R}$, and so condition (C1) is met. By construction, $h_m$ is clearly $(m-1)$-times differentiable with its $(m-1)$-th derivative being Lipschitz for all $x\in\mathbb{R}$, except possibly at the point $x=x_n=0$. We now prove that this is also the case at $x=0$. We need to prove that $h_m^{(i)}(0-)=h_m^{(i)}(0+)$ for $0\leq i\leq m-1$. Observe that $h_m(x)+h_m(-x)=1$ for all $x\in\mathbb{R}$. For $h_m$ to be continuous at $x=0$, we therefore require that $h_m(0-)=h_m(0+)=\frac{1}{2}$. This holds due to the formula $\sum_{j=1}^{n-1}(-1)^{j+1}\cos^{2n}(\frac{\pi j}{2n})=\frac{1}{2}-\frac{n}{2^{2n-1}}$ (see \cite{fonseca}). We also see that 
for odd $i\geq1$ we have $h_m^{(i)}(x)=h_m^{(i)}(-x)$ for all $x\in\mathbb{R}$, and so $h_m^{(i)}(0+)=h_m^{(i)}(0-)$ for odd $i$. Similarly, for even $i=2\ell\geq2$, we have that $h_m^{(2\ell)}(x)=-h_m^{(2\ell)}(-x)$, and for the derivatives $h_m^{(2\ell)}$, $1\leq\ell\leq n-1$, to be continuous at $x=0$ we require that $h_m^{(2\ell)}(0+)=h_m^{(2\ell)}(0-)=0$ for all $1\leq\ell\leq n-1$. This is indeed the case due to the formula $\sum_{j=1}^{n-1}(-1)^{j+1}\cos^{2\ell}(\frac{\pi j}{2n})=\frac{1}{2}$, $1\leq \ell\leq n-1$ (see \cite{fonseca}). We have thus verified condition (C2), and so have proved that $h_m\in\mathcal{F}_m$ for even $m\geq2$.

Let us now consider the case that $m=2n+1\geq1$ is odd. Consider the function $h_m:\mathbb{R}\rightarrow\mathbb{R}$ defined by $h_m(x) = 1 $ for $x \in (-\infty,-1]$, $ h_m(x) = 0 $ for $x \in [1,\infty)$, and in the interval $x\in(-1,1)$, for the case $n\geq1$, is given by
 \begin{align*}
 h_m(x)=
 \left\{ \begin{aligned} 
 1- \frac{2^{m-2}}{m}\bigg[(1+x)^m-2\sum_{j=1}^k(-1)^{j+1}(x_j+x)^m\bigg], & & \text{$x\in -I_k$, $0\leq k\leq n$,} & 
 \\ \frac{2^{m-2}}{m}\bigg[(1-x)^m-2\sum_{j=1}^k(-1)^{j+1}(x_j-x)^m\bigg], & & \text{$x \in I_k$, $0\leq k\leq n$,} &
 \end{aligned} \right.
 \end{align*}
 where $I_n=[0,x_n)$, $-I_{n}=(-x_n,0)$, and $I_k=[x_{k+1},x_{k})$ and $-I_k=(-x_k,-x_{k+1}]$ for $0\leq k\leq n-1$. Here $x_k=\cos(\frac{\pi k}{m})=\cos(\frac{\pi k}{2n+1})$.
For $m=1$ we define $h_1(x)=\frac{1}{2}-\frac{1}{2}x$ if $x\in(-1,1)$. 

The argument to verify that $h_m\in\mathcal{F}_m$ for odd $m\geq1$ is similar to the case of even $m\geq2$, and again doing so for the special case $m=1$ is simple. The only way the argument changes is that this time to confirm continuity of $h_m$ at $x=0$ we use the formula $\sum_{j=1}^{n}(-1)^{j+1}\cos^{2n+1}(\frac{\pi j}{2n+1})=\frac{1}{2}-\frac{2n+1}{2^{2n+1}}$, whilst we confirm that $h_m^{(2\ell)}(0+)=h_m^{(2\ell)}(0-)$, $1\leq \ell\leq n$, by using the formula $\sum_{j=1}^{n}(-1)^{j+1}\cos^{2\ell+1}(\frac{\pi j}{2n+1})=\frac{1}{2}$, $0\leq \ell\leq n-1$. These series involving the cosine function can be proved by making minor changes to the argument used to prove the two formulas of \cite{fonseca} that we quoted in proving that $h_m\in\mathcal{F}_m$ for even $m\geq2$. The proof is now complete.
\hfill $\Box$

\subsection{Multivariate indicator functions}\label{sec4.2}

Let $m\geq1$, $z\in\mathbb{R}^d$ and $\alpha>0$.
For $x=(x_1,\ldots,x_d)^T\in\mathbb{R}^d$ and $z=(z_1,\ldots,z_d)^T\in\mathbb{R}^d$, we write $x\leq z$ if $x_j\leq z_j$ for all $j=1,\ldots,d$.
In this section, we seek functions $h_{m,z,\alpha}:\mathbb{R}^d\rightarrow[0,1]$ such that $h_{m,z,\alpha}(x)\geq \mathbf{1}_{x\leq z}$, satisfying $h_{m,z,\alpha}(x)=1$ if $x\leq z$, $h_{m,z,\alpha}(x)=0$ if $x_j\geq z_j+\alpha$ for some $j\in\{1,\ldots,d\}$, and also with all $(m-1)$-th order partial derivatives being Lipschitz. We will denote this class of functions by $\mathcal{F}_{m,z,\alpha,d}$.






\begin{lemma}\label{prop2.13} 
For $d\geq1$, $m\geq1$, $z\in\mathbb{R}$ and $\alpha>0$, there exists $h_{m,z,\alpha}\in\mathcal{F}_{m,z,\alpha,d}$ such that
\begin{align}\label{multihbd0}\max_{0\leq i\leq m}|h_{m,z,\alpha}|_i&\leq 2^{\frac{3m}{2}-2}(m-1)!\bigg[1+\frac{2^m}{\alpha^m}\bigg],\quad \alpha>0,\\
\label{multihbd}\max_{0\leq i\leq m}|h_{m,z,\alpha}|_i&\leq \frac{2^{\frac{5m}{2}-2}(m-1)!}{\alpha^m}, \quad 0<\alpha\leq1.
\end{align}
where the notation $|h|_i$ is defined in Section \ref{sec2.2}. 
 If $1\leq m\leq 4$, then the bounds (\ref{multihbd0}) and (\ref{multihbd}) can be improved to
 \begin{align}\label{multihbd2yy}\max_{0\leq i\leq m}|h_{m,z,\alpha}|_i&\leq 2^{m-2}(m-1)!\bigg[1+\frac{2^m}{\alpha^m}\bigg], \quad \alpha>0, \\
\label{multihbd2}\max_{0\leq i\leq m}|h_{m,z,\alpha}|_i&\leq \frac{2^{2m-2}(m-1)!}{\alpha^m}, \quad 0<\alpha\leq1.
\end{align}
\end{lemma}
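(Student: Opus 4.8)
The plan is to build the multivariate approximating function $h_{m,z,\alpha}$ as a product of univariate ones. Given the construction from Lemma \ref{conj1}, let $g_m := h_m \in \mathcal{F}_m$ be the univariate perfect spline with $\|g_m^{(m)}\| = 2^{m-2}(m-1)!$, and let $g_{m,a,b}(t) = g_m\big(\tfrac{2}{b}(t-(a+\tfrac{b}{2}))\big)$ be its rescaled version in $\mathcal{F}_{m,a,b}$, which by Corollary \ref{cormax} (or rather the inequalities in its proof) satisfies $\|g_{m,a,b}^{(i)}\| = (2/b)^i\|g_m^{(i)}\|$ for $0\le i\le m$. Then define
\begin{equation*}
h_{m,z,\alpha}(x) = \prod_{j=1}^d g_{m,z_j,\alpha}(x_j), \qquad x=(x_1,\ldots,x_d)^T\in\mathbb{R}^d.
\end{equation*}
First I would check that $h_{m,z,\alpha}\in\mathcal{F}_{m,z,\alpha,d}$: since each factor lies in $[0,1]$, so does the product; if $x\le z$ then every $x_j\le z_j$ and every factor equals $1$, so $h_{m,z,\alpha}(x)=1$; if $x_j\ge z_j+\alpha$ for some $j$ then that factor vanishes, so $h_{m,z,\alpha}(x)=0$; and the product of functions with Lipschitz $(m-1)$-th derivatives, all derivatives up to order $m$ bounded, again has this regularity (each factor is $C^{m-1}$ with piecewise-$C^m$ behaviour, so a product is too).

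Next I would estimate $|h_{m,z,\alpha}|_k$ for $0\le k\le m$. A $k$-th order partial derivative $\frac{\partial^k}{\partial x_{i_1}\cdots\partial x_{i_k}}\prod_j g_{m,z_j,\alpha}(x_j)$ factors over coordinates: if the multiset $\{i_1,\ldots,i_k\}$ uses coordinate $j$ with multiplicity $m_j$ (so $\sum_j m_j = k$, each $m_j\le k\le m$), the derivative equals $\prod_{j} g_{m,z_j,\alpha}^{(m_j)}(x_j)$. Using $\|g_{m,z_j,\alpha}^{(m_j)}\| = (2/\alpha)^{m_j}\|g_m^{(m_j)}\| \le (2/\alpha)^{m_j}\|g_m^{(m)}\|$ for $m_j\ge 1$ (Lemma \ref{simplelem}) and $\|g_{m,z_j,\alpha}^{(0)}\|\le 1$, the whole product is bounded by $\|g_m^{(m)}\|^{\#\{j:m_j\ge1\}}\,(2/\alpha)^{k}$. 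The number of coordinates with $m_j\ge 1$ is at most $k\le m$, so when $\|g_m^{(m)}\|\ge 1$ we get $|h_{m,z,\alpha}|_k \le \|g_m^{(m)}\|^{m}(2/\alpha)^k$. Taking the max over $0\le k\le m$ gives $\max_{0\le k\le m}|h_{m,z,\alpha}|_k \le \|g_m^{(m)}\|^m\big[1+(2/\alpha)^m\big]$ in general, and $\le \|g_m^{(m)}\|^m(2/\alpha)^m$ when $0<\alpha\le1$. Plugging in $\|g_m^{(m)}\|=2^{m-2}(m-1)!$ gives bounds of the shape $(2^{m-2}(m-1)!)^m[1+\cdots]$ — which is far worse than claimed. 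So the naive product bound is too lossy; the exponent $m$ on $\|g_m^{(m)}\|$ must be brought down to $1$.

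The key observation to fix this is that a $k$-th order partial derivative with $k\le m$ can hit a single coordinate at most $m$ times, but crucially I should \emph{not} bound every factor $\|g_m^{(m_j)}\|$ by $\|g_m^{(m)}\|$: for a coordinate hit $m_j<m$ times, Lemma \ref{simplelem} only gives $\|g_m^{(m_j)}\|\le\|g_m^{(m)}\|$, but I can instead choose the univariate building block more cleverly. The honest route is to use the \emph{tensor-product} structure differently: take $h_{m,z,\alpha}(x) = \phi\big(\sum_{j=1}^d \psi(x_j)\big)$-type constructions, or — matching the exponents $2^{3m/2-2}$ vs the univariate $2^{m-2}$, a gap of exactly $2^{m/2}$ — realize that one should instead bound the product-rule sum. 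For the improved range $1\le m\le 4$ giving exactly the univariate constant $2^{m-2}(m-1)!$, the natural explanation is that for $m\le 4$ one can afford $\|g_m^{(m_j)}\|\cdot\prod(\text{others})$ telescoping because $g_m$ and its low derivatives are uniformly controlled; the extra factor $2^{3m/2-2}/2^{m-2}=2^{m/2}$ for general $m$ comes from bounding the number of ordered partitions / the combinatorics of which coordinates get differentiated how many times. I would therefore: (i) fix the univariate $g_m$, (ii) observe $g_m^{(i)}$ for $0\le i\le m$ all satisfy $\|g_m^{(i)}\|\le \|g_m^{(m)}\| = 2^{m-2}(m-1)!$, (iii) for a $k$-fold partial derivative, $k\le m$, at most one coordinate is differentiated (since $k\le m$ and differentiating distinct coordinates once each uses $k$ of them) — no, that is false for $k\ge 2$. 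The real main obstacle, and where I would spend the effort, is exactly pinning down the combinatorial/product-rule bound that yields the clean exponents $2^{3m/2-2}$ (general $m$) and $2^{m-2}$ ($m\le4$) without the spurious power of $\|g_m^{(m)}\|$; I expect this comes from writing the mixed partial as a product over the at-most-$m$ active coordinates, bounding the factor on a coordinate differentiated $m_j$ times by $(2/\alpha)^{m_j}\|g_m^{(m_j)}\|$ and then using a sharper bound on $\|g_m^{(j)}\|$ for $j<m$ (or on $\prod_j \|g_m^{(m_j)}\|$ over partitions $\sum m_j=k$) than the crude $\|g_m^{(m)}\|$. Once that combinatorial lemma is in hand, the rest is the bookkeeping above, and the restriction $\alpha\le 1$ is used exactly as in Corollary \ref{cormax} to drop the ``$1+$'' term.
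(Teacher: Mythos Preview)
Your construction (tensor product of the univariate $g_m$) and your verification that it lies in $\mathcal{F}_{m,z,\alpha,d}$ match the paper exactly, and you have correctly isolated the real issue: controlling $\max_{n_1+\cdots+n_d=m}\prod_j\|g_m^{(n_j)}\|$. Where the proposal stalls is precisely at the step you flag as ``a sharper bound on $\|g_m^{(j)}\|$ for $j<m$'': you never say what that bound is, and without it the argument does not close. The paper's missing ingredient is the Landau--Kolmogorov inequality on $\mathbb{R}$,
\[
\|g^{(k)}\|\le C_{m,k}\,\|g\|^{1-k/m}\|g^{(m)}\|^{k/m},\qquad 1\le k\le m-1,
\]
with $C_{m,k}=K_{m-k}K_m^{-1+k/m}$ expressed through Favard constants. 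Since $\|g_m\|=1$ and $\|g_m^{(m)}\|=2^{m-2}(m-1)!$, this gives $\|g_m^{(k)}\|\le C_{m,k}\big(2^{m-2}(m-1)!\big)^{k/m}$, and then for any partition $\sum_j n_j=m$ the product telescopes:
\[
\prod_j\|g_m^{(n_j)}\|\le \Big(\prod_j C_{m,n_j}\Big)\,2^{m-2}(m-1)!.
\]
The extra factor $2^{m/2}$ in the general bound comes from estimating $\prod_j C_{m,n_j}\le 2^{m/2}$ via the known ordering $1=K_0<K_2<K_4<\cdots<4/\pi<\cdots<K_3<K_1=\pi/2$ of the Favard constants. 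None of your speculative alternatives (compositions $\phi(\sum\psi)$, counting ordered partitions) will produce the stated constants; the interpolation inequality is what makes the exponents collapse from $(\|g_m^{(m)}\|)^m$ to $\|g_m^{(m)}\|$.

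For $1\le m\le 4$ the paper does not use Landau--Kolmogorov at all: it simply computes $\|g_m^{(j)}\|$ explicitly for each $j$ from the formulas in the appendix and checks case by case that $\max_{\sum n_j=m}\prod_j\|g_m^{(n_j)}\|=2^{m-2}(m-1)!$ (the maximum is always attained at the unsplit partition $(m,0,\ldots,0)$). Your proposal does not attempt this either.
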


\begin{remark}We conjecture that inequality (\ref{multihbd2}) holds for all $m\geq1$. Our method of proving inequality (\ref{multihbd2}) for $1\leq m\leq4$ could also be applied to show that the inequality holds (assuming this is the case) for a given $m\in\{5,6,\ldots\}$, although the calculations would become more tedious as $m$ increases. 
\end{remark}

\begin{proof} Consider the function $h_{m,z,\alpha}:\mathbb{R}^d\rightarrow[0,1]$ defined by $h_{m,z,\alpha}(x) = \prod^d_{j=1} h_{m,z_j,\alpha}(x_j)$, where $h_{m,z_j,\alpha}\in\mathcal{F}_{m,z_j,\alpha}$, $1\leq j\leq d$, are the same functions as those given in the proof of Lemma \ref{conj1}, following an application of the formula $h_{m,z_j,\alpha} (x) = h_m(\frac{2}{\alpha} (x-(z_j+\frac{\alpha}{2})) ) $ . All of the $(m-1)$-th order partial derivatives of $h_{m,z,\alpha}$ are Lipschitz due to the standard result that the product of bounded Lipschitz functions is Lipschitz, and since $h_{m,z_j,\alpha}\in\mathcal{F}_{m,z_j,\alpha}$, $1\leq j\leq d$, we readily deduce that $h_{m,z,\alpha}\in\mathcal{F}_{m,z,\alpha,d}$. 

Arguing as we did in arriving at inequalities (\ref{maxfor0}) and (\ref{maxfor}) we obtain that
\begin{align*}
\max_{0 \leq i \leq m} | h_{m,z,\alpha} |_i &\leq |h_m|_m \bigg[1+\bigg(\frac{2}{\alpha}\bigg)^m\bigg], \quad \alpha>0, \\
\max_{0 \leq i \leq m} | h_{m,z,\alpha} |_i &\leq |h_m|_m \bigg(\frac{2}{\alpha}\bigg)^m, \quad 0<\alpha\leq1.
\end{align*} 
We have that
\begin{align}\label{prodfor}|h|_m=\max_{n_1,\ldots,n_d}\bigg\|\frac{\partial^{m}}{\partial x_1^{n_1} \ldots \partial x_d^{n_d}  }h_{m}(x) \bigg\|\leq\max_{n_1,\ldots,n_d}\prod_{j=1}^d\|h_{m}^{(n_j)}\|,
\end{align}
where the maximum is taken over all $n_1,\ldots,n_d\geq0$ such that $\sum_{j=1}^dn_j=m$. We can bound the product in (\ref{prodfor}) using the Landau-Kolmogorov inequality \cite{k49,l13} for functions defined on the real line, $\|g^{(k)}\|\leq C_{m,k}\|g\|^{1-\frac{k}{m}}\|g^{(m)}\|^{\frac{k}{m}}$, $1\leq k\leq m-1$, where $C_{m,k}=K_{m-k}K_m^{-1+\frac{k}{m}}$ and $K_r=\frac{4}{\pi}\sum_{j=0}^\infty\big[\frac{(-1)^j}{2j+1}\big]^{r+1}$ is the Favard constant of order $r$.
Since $\|h_{m}\|=1$ and $\|h_{m}^{(m)}\|=2^{m-2}(m-1)!$ we get that $\|h_{m}^{(k)}\|\leq 2^{k(m-2)/m} C_{m,k}((m-1)!)^{\frac{k}{m}}$ for $1\leq k\leq m-1$. Therefore, since $\sum_{j=1}^dn_j=m$,
\begin{align*}|h_m|_m\leq\max_{n_1,\ldots,n_d}\prod_{j=1}^d2^{n_j(m-2)/m} C_{m,n_j}((m-1)!)^{\frac{n_j}{m}}=C_{m}\cdot 2^{m-2}(m-1)!,
\end{align*} 
where
\begin{align}\label{mmfor}C_m=\max_{n_1,\ldots,n_d}\prod_{j=1}^dC_{m,n_j}=\max_{n_1,\ldots,n_d}\prod_{j=1}^dK_{m-n_j}K_m^{-1+\frac{n_j}{m}}=K_m\max_{n_1,\ldots,n_d}\prod_{j=1}^d\frac{K_{m-n_j}}{K_m}.
\end{align}
We now note that $1=K_0<K_2<K_4<\cdots<\frac{4}{\pi}<\cdots<K_3<K_1=\frac{\pi}{2}$ (see \cite{k91}).
Therefore, for $m\geq2$, we have $K_m\leq K_3=\frac{1}{24}\pi^3$ and $K_m\geq K_2=\frac{1}{8}\pi^2$, whilst for $m\geq2$ and $n_j\geq1$ we have $K_{m-n_j}\leq K_1=\frac{\pi}{2}$. Applying these inequalities to (\ref{mmfor})  yields the bound
\begin{align*}C_m\leq\frac{\pi^3}{24}\bigg(\frac{\pi/2}{\pi^2/8}\bigg)^m=\frac{\pi^3}{24}\bigg(\frac{4}{\pi}\bigg)^m\leq 2^{\frac{m}{2}}, \quad m\geq3.
\end{align*} 
We have thus proved inequalities (\ref{multihbd0}) and (\ref{multihbd}) for $m\geq3$, which suffices because (\ref{multihbd2yy}) and (\ref{multihbd2}) (proved below) provide more accurate bounds for $m=1,2$.

For $1\leq m\leq 4$ we can obtain the improved bounds (\ref{multihbd2yy}) and (\ref{multihbd2}) by obtaining a more accurate bound for the right-hand side of inequality (\ref{prodfor}). Let $h_1,h_2,h_3,h_4$ be the functions constructed in the proof of Lemma \ref{conj1}, which are listed in Appendix \ref{appcon}. Then direct calculations show that  
$\|h_1^{(1)}\|=\frac{1}{2}$, $\max(\|h_2^{(2)} \|, \|h_2^{(1)} \|^2 ) = \max(1,1) = 1$, $\max(\|h_3^{(3)} \|, \|h_3^{(2)} \|\cdot \|h_3^{(1)} \|,\|h^{(1)}_3 \|^3 ) = \max(4,2,1) = 4$ and  
\begin{align*}&\max(\|h_4^{(4)} \|, \|h_4^{(3)} \|\cdot \|h_4^{(1)} \|, \|h_4^{(2)} \|^2, \|h_4^{(2)} \| \cdot \|h_4^{(1)} \|^2, \|h_4^{(1)} \|^4 )\\
&=\max(24,(24\sqrt{2}-24)(4-2\sqrt{2}),(36-24\sqrt{2})^2,(36-24\sqrt{2})(4-2\sqrt{2})^2,(4-2\sqrt{2})^4)\\
&=24.
\end{align*}
 Letting  $M_m=2^{m-2}(m-1)!$ and noting that $M_1=\frac{1}{2}$, $M_2=1$, $M_3=4$ and $M_4=24$ means that we have now proved that inequalities (\ref{multihbd2yy}) and (\ref{multihbd2}) hold for $1\leq m\leq4$. 
%
\end{proof}
 
 \section{Proof of the general bounds of Section \ref{sec2}}\label{sec5}

\noindent{\emph{Proof of Proposition \ref{prop2.1}.}} 
Let $m \geq 1$, $z \in \mathbb{R}$ and $0<\alpha \leq 1.$ Let $M_m=2^{m-2}(m-1)!$ and $N_m=2^mM_m$. Also, let $h_{m,z,\alpha}\in\mathcal{F}_{m,z,\alpha}$ be such that $\max_{0\leq i\leq m}\|h_{m,z,\alpha}^{(i)}\|\leq\frac{N_m}{\alpha^m}$ (such a function exists by Corollary \ref{cormax}, if $0<\alpha\leq1$). Let $X$ be an arbitrary real-valued random variable and let $Y$ be a real-valued random variable with density $p(y)$, $y \in \mathbb{R}.$ Then
\begin{align}\label{eq:4}
 \mathbb{P}(X \leq z) - \mathbb{P}(Y \leq z)  & \leq   \mathbb{E}[h_{m,z,\alpha}(X)] - \mathbb{E}[h_{m,z,\alpha}(Y)] +  \mathbb{E}[h_{m,z,\alpha}(Y)] - \mathbb{P}(Y \leq z) \nonumber
 \\ & = \frac{N_m}{\alpha^m}\bigg\{\mathbb{E}\bigg[\frac{h_{m,z,\alpha}(X)}{N_m/\alpha^m}\bigg] - \mathbb{E}\bigg[\frac{h_{m,z,\alpha}(Y)}{N_m/\alpha^m}\bigg] \bigg\}  + \int^{z+\alpha}_z h_{m,z,\alpha} (y) p(y)\, dy \nonumber
 \\ & \leq \frac{N_m}{\alpha^m}d_{m}(X,Y)+ \int^{z+\alpha}_z h_{m,z,\alpha} (y) p(y) \,dy.
\end{align}

\vspace{2mm}

\noindent{(i)}:  
Suppose that $p(y) \leq A$ for all $y \in \mathbb{R}$. Then, from (\ref{eq:4}), we have that 
\begin{equation}\label{modeq}
\mathbb{P}(X \leq z)- \mathbb{P}(Y \leq z) \leq \frac{N_md_{m}(X,Y)}{\alpha^m} + \frac{A\alpha}{2},
\end{equation}
where we used that $\int^{z+\alpha}_z h_{m,z,\alpha}(y)\, dy=\frac{1}{2}$ (since the function we constructed to prove Corollary \ref{cormax} satisfies $h_{m,z,\alpha}(z+\frac{\alpha}{2}+x)+h_{m,z,\alpha}(z+\frac{\alpha}{2}-x)=1$). Taking $\alpha = (\frac{2N_m d_{m}(X,Y)}{A})^{\frac{1}{m+1}} $ (so that we require $d_m(X,Y)\leq\frac{A}{2N_m}$ to ensure $\alpha\leq1$) yields the bound $\mathbb{P}(X \leq z) - \mathbb{P}(Y \leq z) \leq 2(A^mM_m d_m(X,Y))^{\frac{1}{m+1}}. $ We can similarly obtain a lower bound which is the negative of the upper bound (this is also the case in the proofs of inequalities (\ref{dmsecond})--(\ref{dmfourth}) below), and so we have proved inequality (\ref{dmfirst}).

\vspace{2mm}

\noindent{(ii)}: Suppose now that the density of $Y$ has $n$ singularities $y_1,\ldots,y_n.$ 
Suppose also that $p(y) \leq -A\log |c(y-y_i)|$ for all $|y-y_i|<\epsilon\leq\frac{1}{c},$ $i=1,\ldots,n$, and that $\int_I p(y)\, dy\leq \int^{\delta}_{-\delta} -A \log |cy| \,dy$ for any interval $I$ of length $2\delta\leq2\epsilon.$ Assume that $0<\alpha \leq \min(1,2\epsilon).$ Then, on using these assumptions, inequality (\ref{eq:4}) and the basic inequality $h_{m,z,\alpha}(x)\leq1$, we obtain that 	
\begin{align*}
\mathbb{P}(X \leq z)-\mathbb{P}(Y \leq z) 
& \leq \frac{N_md_{m}(X,Y)}{\alpha^m} + \int^{\frac{\alpha}{2}}_{-\frac{\alpha}{2}} -A \log|cy|\, dy
\\ & = \frac{N_md_{m}(X,Y)}{\alpha^m} +A\alpha\bigg[1 +\log\bigg( \frac{2}{c\alpha}\bigg)\bigg].
\end{align*}
Taking $ \alpha = (\frac{N_m d_{m}(X,Y) }{A})^{\frac{1}{m+1}} $ now yields inequality (\ref{dmsecond}). 
 
\vspace{2mm}

\noindent{(iii)}: Suppose now $p(y) \leq A|y-y_i|^{-a}$ for all $|y-y_i|<\epsilon$, $i=1,\ldots,n$. Then arguing similarly to we did in part (ii) of the proof we get that
 \begin{equation*}
\mathbb{P}(X \leq z) - \mathbb{P}(Y \leq z) 
 \leq \frac{N_md_{m}(X,Y)}{\alpha^m} + \int^{\frac{\alpha}{2}}_{-\frac{\alpha}{2}} A|y|^{-a} \,dy
 = \frac{N_md_{m}(X,Y)}{\alpha^m} +\frac{2^a A}{1-a} \alpha^{1-a}.
\end{equation*}
Taking $\alpha = (\frac{N_m(1-a)d_m(X,Y)}{2^aA})^{\frac{1}{m+1-a}} $ yields inequality (\ref{dmthird}).

\vspace{2mm}

\noindent{(iv)}: Now suppose that $p(y) \leq A |y-y_i|^{-a}(-\log |c(y-y_i)|)^b $ for all $|y-y_i| < \epsilon$, $i=1,\ldots,n$. Then
 \begin{align}\label{eq:15}
\mathbb{P}(X \leq z)-\mathbb{P}(Y \leq z) & \leq \frac{N_md_{m}(X,Y)}{\alpha^m} + \int^{\frac{\alpha}{2}}_{-\frac{\alpha}{2}} A |y|^{-a}(-\log |cy|)^b \,dy \nonumber
\\ & = \frac{N_md_{m}(X,Y)}{\alpha^m} + 2\int^{\frac{\alpha}{2}}_{0} A y^{-a}(-\log (cy))^b \,dy \nonumber
\\ & = \frac{N_md_{m}(X,Y)}{\alpha^m} + \frac{2c^{a-1}A}{(1-a)^{b+1}} \Gamma\bigg(b+1,(1-a) \log \bigg(\frac{2}{c\alpha}\bigg) \bigg),
\end{align}
where $\Gamma(r,x) = \int^\infty_x t^{r-1} e^{-t} \,dt $ is the upper incomplete gamma function, and we made the change of variables $t=-(1-a) \log (cy)$ to compute the second integral. In order to obtain a simplified bound, we will apply the inequality $\Gamma(b+1,y) \leq 2^{b+1}y^b e^{-y}, $ which holds for $b \geq 0$ and $e^y>2^{b+1}$ (see 
\cite{jameson16}). 
Applying this inequality into (\ref{eq:15}) gives the bound
\begin{equation*} 
\mathbb{P}(X \leq z)-\mathbb{P}(Y \leq z) \leq \frac{N_md_{m}(X,Y)}{\alpha^m} + \frac{2^{a+b+1}A}{1-a} \alpha^{1-a}\log^b\bigg(\frac{2}{c\alpha}\bigg),
\end{equation*}
under the additional assumption $\alpha^{1-a}<\frac{1}{2^{a+b}c^{1-a}}.$
Now take $\alpha = (\frac{(1-a)N_m d_{m} (X,Y)}{2^{a+b+1}A})^{\frac{1}{m+1-a}} $. \hfill $\Box$





\vspace{3mm}

\noindent{\emph{Proof of Proposition \ref{prop2.2}.}} 
The proof is similar to that of Proposition \ref{prop2.1}, and we only highlight the ways in which the argument changes. Firstly, we modify the argument by using the bound $\max_{0 \leq i \leq m} \|h_{m,z,\alpha}^{(i)} \| \leq M_m(1+ \frac{2^m}{\alpha^m})$, which is valid for all $\alpha>0$ (see Corollary \ref{cormax}). Applying this bound leads to the following analogue of inequality (\ref{eq:4}) that holds for all $\alpha>0$:
\begin{equation*}\mathbb{P}(X \leq z) - \mathbb{P}(Y \leq z)\leq \bigg(M_m+\frac{N_m}{\alpha^m}\bigg)d_{m}(X,Y)+ \int^{z+\alpha}_z h_{m,z,\alpha} (y) p(y) \,dy.
\end{equation*}

\vspace{2mm}

\noindent{(i):} Arguing as we did in obtaining inequality (\ref{modeq}) we get
\begin{equation*}\mathbb{P}(X \leq z) - \mathbb{P}(Y \leq z) \leq \bigg(M_m+ \frac{N_m}{\alpha^m}\bigg)d_m(X,Y)+\frac{A\alpha}{2},
\end{equation*}
and taking $\alpha = (\frac{2N_md_m(X,Y)}{A})^{\frac{1}{m+1}}$ yields the desired bound.

\vspace{2mm}

\noindent{(ii):} We have that 
\begin{align}\label{inty}
\mathbb{P}(X \leq z)-\mathbb{P}(Y \leq z)  
& \leq \bigg(M_m+ \frac{N_m}{\alpha^m}\bigg)d_m(X,Y) + 2\int_{0}^{\frac{\alpha}{2}} -A \log_{-}(cy)\, dy+\frac{B\alpha}{2}.
\end{align}
Now, 
\begin{align*}\int_{0}^{\frac{\alpha}{2}} - \log_{-}(cy)\, dy=\int_{0}^{\frac{\alpha}{2}\wedge \frac{1}{c}} - \log(cy)\, dy&=\bigg(\frac{\alpha}{2}\wedge\frac{1}{c}\bigg)\bigg[1+\log\bigg(\frac{1}{c(\frac{\alpha}{2}\wedge\frac{1}{c})}\bigg)\bigg]\\
&\leq\frac{\alpha}{2}\bigg[1 +\log_{-}\bigg( \frac{2}{c\alpha}\bigg)\bigg].
\end{align*}
Applying this inequality to (\ref{inty}) and taking $ \alpha = (\frac{N_m d_{m}(X,Y) }{A})^{\frac{1}{m+1}} $ yields the desired bound.

\vspace{2mm}

\noindent{(iii):} We have that
\begin{equation*}
\mathbb{P}(X \leq z) - \mathbb{P}(Y \leq z) \leq \bigg(M_m+ \frac{N_m}{\alpha^m}\bigg)d_m(X,Y) +\frac{2^a A}{1-a} \alpha^{1-a},
\end{equation*}
and taking $\alpha = (\frac{N_m(1-a)d_m(X,Y)}{2^aA})^{\frac{1}{m+1-a}} $ yields the desired bound.

\vspace{2mm}

\noindent{(iv):} Arguing similarly to we did in part (ii) we obtain that, for $\alpha^{1-a}<\frac{1}{2^{a+b}c^{1-a}}$, 
\begin{equation*} 
\mathbb{P}(X \leq z)-\mathbb{P}(Y \leq z) \leq \bigg(M_m+ \frac{N_m}{\alpha^m}\bigg)d_m(X,Y)+ \frac{2^{a+b+1}A}{1-a} (c\alpha)^{1-a}\log_{-}^b\bigg(\frac{2}{c\alpha}\bigg)+\frac{B\alpha}{2},
\end{equation*}
and taking $\alpha = (\frac{(1-a)N_m d_{m} (X,Y)}{2^{a+b+1}A})^{\frac{1}{m+1-a}} $ yields the desired bound.
\hfill $\Box$

\vspace{3mm}

\noindent{\emph{Proof of Proposition \ref{prop2.4}.}} Let $m \geq 1$, $z \in \mathbb{R}$ and $\alpha>0.$ Let $M_m'$ be given by $M_m'=2^{m-2}(m-1)!$ for $1\leq m\leq 4$, and $M_m'=2^{\frac{3m}{2}-2}(m-1)!$ for $m\geq5$, and denote $N_m'=2^m M_m'$. 
Also, let $h_{m,z,\alpha}\in\mathcal{F}_{m,z,\alpha,d}$ be such that $\max_{0\leq i\leq m}|h_{m,z,\alpha}|_i\leq M_m'(1+\frac{2^m}{\alpha^m})$ (such a function exists by Lemma \ref{prop2.13}).  Let $X$ be an arbitrary $\mathbb{R}^d$-valued random vector and let $Y$ be a $\mathbb{R}^d$-valued random vector with density $p(y)$, $y \in \mathbb{R}.$ Then it is readily seen that a multivariate analogue of inequality (\ref{eq:4}) is given by 
\begin{equation}\label{near77}\mathbb{P}(X \leq z) - \mathbb{P}(Y \leq z)\leq \bigg(M_m'+\frac{N_{m}'}{\alpha^m}\bigg)d_m(X,Y)+\int_{\mathbb{R}^d} (h_{z,\alpha ,m}(y) - \mathbf{1}_{y \leq z}) p(y) \,dy.
\end{equation}
Now, let $r=(r_1,\ldots,r_d)^T \in \mathbb{R}^d $.  We can write 
\begin{equation*}
\int_{\mathbb{R}^d} (h_{z,\alpha ,m}(y) - \mathbf{1}_{y \leq z}) p(y) \,dy  = \int_{\mathcal{S}} (h_{z,\alpha ,m}(y) - \mathbf{1}_{y \leq z}) p(y) \,dy,
\end{equation*}    
where $\mathcal{S} = \mathbb{R}^d - \{ r \in \mathbb{R}^d: r_1 \leq z_1,\ldots,r_d \leq z_d  \} - \{ r \in \mathbb{R}^d: r_1 \geq z_1+\alpha \lor \ldots \lor r_d \geq z_d+\alpha  \}$. We now split the set $\mathcal{S}$ into several disjoint subsets and we will then calculate the integral over these subsets. Let $2^{\left[d \right]}$ be the set of all subsets of $[d] = \left\{1,2,\ldots,d \right\} .$ 

 Let $J \in 2^{[d]} \backslash \emptyset $ and $\mathcal{S}_J = \{ r \in \mathbb{R}^d: z_k \leq r_k < z_k + \alpha, \forall k \in J~ \text{and}~ r_k \leq z_k, \forall k \in J^c  \} $. For example, $\mathcal{S}_{\{1 \}} =\{ r \in \mathbb{R}^d : z_1 \leq r_1<z_1+\alpha ~\text{and}~ r_k \leq z_k, \forall k\in\{2,\ldots,d\} \}.  $ Using this notation, we are able to write $\mathcal{S}  =  \cup_{J \in (2^{[d]} \backslash \emptyset)} \mathcal{S}_J$, so that
\begin{equation*}
 \int_{\mathcal{S}} (h_{z,\alpha ,m}(y) - \mathbf{1}_{y \leq z}) p(y)\, dy  = \sum_{J \in (2^{[d]} \backslash \emptyset)} \int_{\mathcal{S}_J} h_{z,\alpha ,m}(y)p(y) \,dy.
\end{equation*}
Assume that $|J| = d' \leq d$  and $J' = \left\{1,2,\ldots,d' \right\}. $ Then we get that 
\begin{equation}\label{eq:6}
  \int_{S_{J'}}  h_{z,\alpha ,m}(y)p(y)\, dy =  \int_{z_1}^{z_1+\alpha} \cdots \int^{z_{d'}+\alpha}_{z_{d'}} (\prod^{d'}_{i=1} h_{m,z_i,\alpha}(y_i)) p'(y_1,\ldots,y_{d'}) \,dy_{d'} \cdots dy_1,
\end{equation}
where $ p'(y_1,\ldots,y_{d'}) = \int^{z_{d'+1}}_{-\infty} \cdots \int^{z_d}_{-\infty} p(y_1,\ldots,y_d) \,d_{y_d}\cdots d_{y_{d'+1}}.  $

\vspace{2mm}

\noindent{(i):} Suppose $p(y) \leq A$, for all $y \in \mathbb{R}^d$.  Then, using (\ref{eq:6}), we have $\int_{S_J}  h_{z,\alpha ,m}(y)p(y) \,dy \leq C_J \alpha^{d'}$ and hence $\int_{\mathcal{S}} (h_{z,\alpha ,m}(y) - \mathbf{1}_{y \leq z}) p(y) \,dy \leq \sum^d_{i=1} C_i \alpha^i\leq C(\alpha+\alpha^d)$, where $C>0$ is some constant, which now may change from one expression to the next.  Substituting this inequality into (\ref{near77}) and then taking $\alpha=(d_m(X,Y))^{\frac{1}{m+1}}$ implies that there exists a constant $C>0$ such that
\begin{equation*}\mathbb{P}(X \leq z) - \mathbb{P}(Y \leq z)\leq C\big[(d_m(X,Y))^{\frac{1}{m+1}}+d_{m}(X,Y)+(d_m(X,Y))^{\frac{d}{m+1}}\big].
\end{equation*}
Similarly, we can find a lower bound and so we deduce that
there exists a constant $C>0$ such that
\begin{equation*}d_{\mathrm{K}}(X,Y)\leq C\big[(d_m(X,Y))^{\frac{1}{m+1}}+d_{m}(X,Y)+(d_m(X,Y))^{\frac{d}{m+1}}\big].
\end{equation*}
To get a compact final bound, we note that if $d_{m}(X,Y)\leq1$ we get that (for another constant $C$) $d_{\mathrm{K}}(X,Y)\leq C(d_m(X,Y))^{\frac{1}{m+1}}$, whilst if $d_m(X,Y)>1$, then taking $C$ large enough so that $C(d_m(X,Y))^{\frac{1}{m+1}}\geq1$ means that $d_{\mathrm{K}}(X,Y)\leq C(d_m(X,Y))^{\frac{1}{m+1}}$ is satisfied. Therefore, there exists a universal constant $C>0$ such that $d_{\mathrm{K}}(X,Y)\leq C(d_m(X,Y))^{\frac{1}{m+1}}$.
  
\vspace{2mm}

\noindent{(ii) and (iii)}: Now we obtain that 
\begin{align*}\int_{\mathcal{S}} (h_{z,\alpha ,m}(y) - \mathbf{1}_{y \leq z}) p(y) \,dy \leq \sum^d_{i=1} \alpha^{i}\bigg[C_i \log_{-}\bigg(\frac{2}{c\alpha}\bigg) + D_i\bigg]\leq C_1'(\alpha+\alpha^d)\log\bigg(\frac{C_2'}{\alpha}\bigg)
\end{align*}
and
\[\int_{\mathcal{S}} (h_{z,\alpha ,m}(y) - \mathbf{1}_{y \leq z}) p(y) \,dy \leq\sum_{i=1}^d C_i\alpha^{i-a}\leq C(\alpha^{1-a}+\alpha^{d-a}),\]
where $C_1',C>0$ and $C_2'>1$ are universal constants. We now proceed as we did in proving part (i) to get the desired compact final bounds.
\hfill $\Box$

\vspace{3mm}

\noindent{\emph{Proof of Proposition \ref{prop2.5}.}} 
We prove the result for the case $\mu=0$; the general case follows by making a simple translation. Using inequality (\ref{eq:7}), we have that, for $\alpha>0$, 
\begin{align}
  \mathbb{P}(X \leq z )- \mathbb{P}(Y \leq z) & \leq \mathbb{E}[h_{z,\alpha ,m}(X)]-\mathbb{E}[h_{z,\alpha ,m}(Y)]+\mathbb{E}[h_{z,\alpha ,m}(Y)]-\mathbb{P}(Y \leq z) \nonumber
\\ & \leq \bigg(M_m'+\frac{N_{m}'}{\alpha^m}\bigg)d_{m}(X,Y) + \mathbb{P}(Y \leq z + \alpha  )- \mathbb{P}(Y \leq z) \nonumber
\\ 
\label{nearbb}& \leq \bigg(M_m'+\frac{N_{m}'}{\alpha^m}\bigg)d_{m}(X,Y) + \frac{\alpha}{\sigma}(\sqrt{2 \log d} +2).
\end{align}
Taking $\alpha = (\frac{\sigma N_{m}'}{\sqrt{2 \log d}+2} d_m (X,Y))^{\frac{1}{m+1}}$ yields inequality (\ref{multin0}).  If we now suppose that $0<\alpha\leq1$, then we can replace the factor $(M_m'+\frac{N_{m}'}{\alpha^m})$ in (\ref{nearbb}) by $\frac{N_{m}'}{\alpha^m}$. Taking the same $\alpha$ as we did to obtain inequality (\ref{multin0}) now yields (\ref{multin}). Finally, we note that if $m\geq2$ then inequalities (\ref{multihbd}) and (\ref{multihbd2}) hold for $0<\alpha\leq2$, allowing a weaker assumption for $d_m(X,Y)$ to be made. 
\hfill $\Box$

\appendix



\section{Explicit formulas for the functions in Lemma \ref{conj1} when $1\leq m\leq4$}\label{appcon}

For $1\leq m\leq4$, we have $h_m(x)=1$ for $x\leq-1$, and $h_m(x)=0$ if $x\geq1$. For $-1< x<1$,
\begin{align*}h_1(x)&=-\tfrac{1}{2}x+\tfrac{1}{2}, \quad x\in(-1,1),\\
h_2(x)&=\left\{ \begin{aligned} 1-\tfrac{1}{2}(1+x)^2, & & \text{$x \in (-1,0]$,}
 \\ \tfrac{1}{2}(1-x)^2, & & \text{$x \in (0,1)$,}
 \end{aligned} \right.\\
h_3(x)&= \left\{ \begin{aligned}  1-\tfrac{2}{3}(1+x)^3, & & \text{$x \in (-1,-\tfrac{1}{2}]$,} &
 \\ \tfrac{2}{3}x^3-x+\tfrac{1}{2}, & & \text{$x \in (-\tfrac{1}{2},\tfrac{1}{2}]$,}&
 \\ \tfrac{2}{3}(1-x)^3, & & \text{$x \in (\tfrac{1}{2},1)$,}&
 \end{aligned} \right.\\
 h_4(x)&=\left\{ \begin{aligned} 
    1- (1+x)^4, & & \text{$x \in (-1,-\tfrac{1}{\sqrt{2}}]$,} &
 \\ x^4+4(\sqrt{2}-1)x^3+2(\sqrt{2}-2)x+\tfrac{1}{2}, & & \text{$x \in (-\tfrac{1}{\sqrt{2}},0]$,} &
 \\ -x^4+4(\sqrt{2}-1)x^3+2(\sqrt{2}-2)x+\tfrac{1}{2}, & & \text{$x \in (0,\tfrac{1}{\sqrt{2}}]$,} &
 \\  (1-x)^4, & & \text{$x \in (\tfrac{1}{\sqrt{2}},1)$.} &
 \end{aligned} \right. 
\end{align*}

\section*{Acknowledgements}
\rg{We would like to thank the reviewer for their constructive comments and helpful suggestions.} RG was supported by a Dame Kathleen Ollerenshaw Research
Fellowship. SL was supported by a University of Manchester Research Scholar Award.

\footnotesize

\appendix

\normalsize

\end{document}